\DeclareSymbolFont{rsfs}{U}{rsfs}{m}{n}
\DeclareSymbolFontAlphabet{\mathscrsfs}{rsfs}
\pgfplotsset{compat=1.15}
\definecolor{CarmineRed}{rgb}{0.585938, 0, 0.09375}
\definecolor{UltraMarine}{rgb}{0.0703125, 0.0390625, 0.558594}
\def\subsubsection{\@startsection{subsubsection}{3}%
\z@{.5\linespacing\@plus.7\linespacing}{-.5em}%
{\normalfont\itshape}}
\newcommand{\D}{{\mathop{}\!\mathrm{d}}} 
\newcommand{\R}{\mathbb{R}}
\newcommand{\N}{\mathbb{N}}
\newcommand{\A}{\mathbb{A}}
\newcommand{\bS}{\mathbb{S}}
\newcommand{\X}{\mathbb{X}}
\newcommand{\KL}{D_{\operatorname{KL}}}
\newcommand{\cD}{\mathscrsfs{D}}
\DeclareMathOperator*{\TV}{\operatorname{TV}}
\DeclareMathOperator*{\FR}{\operatorname{FR}}
\DeclareMathOperator*{\argmax}{arg\,max}
\DeclareMathOperator*{\argmin}{arg\,min}
\numberwithin{equation}{section}  
\newtheorem{definition}{Definition}[section]
\newtheorem{example}[definition]{Example}
\newtheorem{remark}[definition]{Remark}
\newtheorem{theorem}[definition]{Theorem}
\newtheorem{proposition}[definition]{Proposition}
\newtheorem{corollary}[definition]{Corollary}
\newtheorem{lemma}[definition]{Lemma}
\newtheorem{setting}[definition]{Setting}
\newtheorem{asu}[definition]{Assumption}
\title[Fisher-Rao Gradient Flows of Linear Programs]{
Fisher-Rao Gradient Flows of Linear Programs and State-Action Natural Policy Gradients
} 
\author[M\"{u}ller]{Johannes M\"{u}ller$^{1,\heartsuit}$}
\email{mueller@mathc.rwth-aachen.de}
\author[\c{C}ayc{\i}]{Semih \c{C}ayc{\i}$^1$}
\email{cayci@mathc.rwth-aachen.de}
\author[Mont{\'u}far]{Guido Mont{\'u}far$^{2,3}$}
\email{montufar@math.ucla.edu}
\address{$^1$ Department of Mathematics, RWTH Aachen University, Aachen, 52062, Germany}
\address{$^2$ Departments of Mathematics and Statistics \& Data Science, University of California, Los Angeles, 90095, USA}
\address{$^3$ Max Planck Institute for Mathematics in the Sciences, Leipzig, 04103, Germany}
\address{$^\heartsuit\!$ Corresponding author}
\begin{document}

\fancyhead[CE]{Fisher-Rao Gradient Flows of Linear Programs}
\fancyhead[CO]{M\"{u}ller, \c{C}ayci, Mont\'{u}far}

\maketitle

\begin{abstract}
Kakade's natural policy gradient method has been studied extensively in the last years showing linear convergence with and without regularization. 
We study another natural gradient method which is based on the Fisher information matrix of the state-action distributions and has received little attention from the theoretical side. 
Here, the state-action distributions follow the Fisher-Rao gradient flow inside the state-action polytope with respect to a linear potential. 
Therefore, we study Fisher-Rao gradient flows of linear programs more generally and show linear convergence with a rate that depends on the geometry of the linear program. 
Equivalently, this yields an estimate on the error induced by entropic regularization of the linear program which improves existing results. 
We extend these results and show sublinear convergence for perturbed Fisher-Rao gradient flows and natural gradient flows up to an approximation error. 
In particular, these general results cover the case of state-action natural policy gradients. 
\\ \textbf{Keywords: }{
Fisher-Rao metric, linear program, entropic regularization, multi-player game, Markov decision process, natural policy gradient
}
\\ \textbf{MSC codes: }{
65K05, 90C05, 90C08, 90C40, 90C53
}
\end{abstract}

\section{Introduction}

Natural policy gradient (NPG) methods and their proximal and trust region formulations known as PPO and TRPO are among the most popular policy optimization techniques in modern reinforcement learning (RL). 
As such they serve as a cornerstone of many recent RL success stories including celebrated advancements in computer games~\cite{schulman2015trust, schulman2017proximal, berner2019dota} and the recent development of large language models like ChatGPT~\cite{achiam2023gpt}. 
This has motivated a quickly growing body of work studying the theoretical aspects such as the convergence properties and statistical efficacy of natural policy gradient methods. 
Almost all of these works consider a specific model geometry where the Fisher-Rao metrics of the individual rows of the policy are mixed according to their state distribution or slight modifications of this~\cite{kakade2001natural, bagnell2003covariant,montufar2014fisher, kerimkulov2023fisher}. 
However, other choices for the model geometry are possible. In particular, the Fisher metric on the state-action distributions has been used to design a natural gradient method as well as actor-critic and a trust-region variant known as relative entropy search (REPS)~\cite{morimura2008new, morimura2009generalized, peters2010relative}. 
This alternative natural policy gradient has been found to have the potential to reduce the severity of plateaus~\cite{morimura2008new} and improve the performance of actor-critic methods~\cite{morimura2009generalized}. 
Despite these findings, theoretical results remain scarce. 
Initial works on the convergence of state-action natural policy gradients show an exponential convergence guarantee~\cite{muller2023geometry}, without quantifying the exponential rate and without addressing function approximation. 
In this article, we provide quantitative convergence results for state-action natural policy gradients both with and without function approximation. 

For our theoretical analysis, we work in the space of state-action distributions which brings the benefit that the reward optimization problem becomes a linear program~\cite{kallenberg1994survey}. 
In particular, for rich enough parametric policy models, the state-action natural policy gradient methods can be described by the Fisher-Rao gradient flow of the state-action linear program~\cite{muller2023geometry}. 
This motivates us to study Fisher-Rao gradient flows for general linear programs. 
These flows coincide with the solutions of entropy-regularized linear programs and thus by studying the convergence of the flow we also bound the error introduced by entropic regularization in linear programming. 

\subsection{Contributions}
It is the goal of this article to provide insights into the convergence properties of state-action natural policy gradients with and without function approximation. 
To this end, we first provide an explicit convergence analysis of Fisher-Rao gradient flows of general linear programs. 
More precisely, our main contributions can be summarized as follows:
\begin{itemize}
    \item We study Fisher-Rao flows of general linear programs. 
    Leveraging a local generalized strong convexity condition we show linear convergence both in KL-divergence and function value with an exponential rate depending on the geometry of the linear program, see \Cref{thm:convergenceFRGF} and \Cref{thm:convergence-without-uniqueness} for unique and non-unique optimizers. 
    \item We obtain an estimate on the regularization error in entropy regularized linear programming improving known convergence rates, 
    see \Cref{cor:regularizationError}. 
    \item We study natural gradients for parametric measures and show sublinear convergence under inexact gradient evaluations up to an approximation error and a distribution mismatch measured in the $\chi^2$-divergence, see \Cref{cor:convergenceGeneral}. 
    \item In a multi-player game with a specific payoff structure, we show linear convergence of the natural gradient flow, see \Cref{thm:multiPlayer}. 
    \item In the context of Markov decision processes, we study state-action natural policy gradients and  
    provide a sublinear convergence result for general policy parametrizations, see \Cref{cor:sublinearConvergenceNPG}, and a linear convergence guarantee gradient for regular parametrizations, see \Cref{thm:linearConvergenceTabular}. In particular, this covers tabular softmax, escort, and log-linear parameterizations. 
    \item For non-unique optimizers, the asymptotic limit of Fisher-Rao gradient flows is known to be the information projection of the initial condition to the set of optimizers. 
    We strengthen this by providing an exponential convergence rate, see \Cref{thm:convergence-without-uniqueness}, and by extending this result to state-action natural policy gradients. 
    This shows that state-action natural gradients converge to an optimal policy that achieves maximal entropy over states and actions, which characterizes its \emph{implicit bias}, see \Cref{thm:convergence-without-uniqueness} and \Cref{thm:linearConvergenceTabular}. 
\end{itemize}

\subsection{Related works} 
State-action natural policy gradients were recently studied with and without state-action entropy regularization in~\cite{muller2023geometry}. 
For regularization strength $\lambda>0$ that work showed $O(e^{-\lambda t})$ convergence, but in the unregularized case, the precise exponential rate was not characterized. 

A mirror descent variant of the state-action natural policy gradients was shown to achieve an optimal $O(\sqrt{T})$ regret in an online setting in~\cite{zimin2013online, dick2014online, neu2017unified}. 

There has been a recent surge of works studying the natural policy gradient method proposed by  Kakade. 
The initial results of~\cite{agarwal2021theory}
showed sublinear convergence rate $O(t^{-1})$ for unregularized problems. This was subsequently improved to a linear rate for step sizes found by exact line search~\cite{bhandari2021linear} and constant step sizes~\cite{khodadadian2022linear, alfano2022linear, yuan2022linear}. 
    For regularized problems, the method converges linearly for small step sizes, locally quadratically for Newton-like step sizes, and linearly with linear function approximation~\cite{cen2021fast,li2023quasi}. 
    The linear convergence of NPG has been extended to the function approximation regime and more general problem geometries, where these results either require geometrically increasing step sizes~\cite{xiao2022PGjmlr,alfano2022linear, yuan2022linear, alfano2023novel} or entropy regularization~\cite{cayci2024convergence, lan2022policy, zhan2021policy, li2023quasi,alfano2023novel}.
    However, these geometries do not cover the state-action geometries. 
    Apart from the works on convergence rates for policy gradient methods for standard MDPs, a primal-dual NPG method with sublinear global convergence guarantees has been proposed for constrained MDPs~\cite{ding2020natural, ding2022convergence}. 
Where all of these results work in discrete time, the gradient flows corresponding to this type of natural policy gradient have been shown to converge linearly under entropy regularization for Polish state and action spaces~\cite{kerimkulov2023fisher}. 

Hessian geometries, which provide a rich generalization of the Fisher-Rao metric, have been studied in convex optimization both from a continuous time perspective and via a discrete-time mirror descent analysis~\cite{alvarez2004hessian, wang2022hessian}. 
In the context of linear programming, linear convergence of the Fisher-Rao gradient flow was shown in~\cite{alvarez2004hessian} albeit without a characterization of the convergence rate. 

In the case of a linear program, the Fisher-Rao gradient flow parametrized by time corresponds to the trajectory of solutions of the entropy-regularized program parametrized by the inverse regularization strength, which has been studied in several works. 
An exponential convergence result was obtained in~\cite{cominetti1994asymptotic} and subsequently, the rate was characterized as $O(e^{-\delta t})$ for a constant $\delta$ depending on the linear program~\cite{weed2018explicit, suarez2023perspectives}. 
The results obtained in this article follow an alternative proof strategy and provide exponential convergence $O(e^{-\Delta t})$, where $\Delta\ge\delta$, where we show that the improvement can be arbitrarily large, see \Cref{ex:improvement}. 
This improvement can be strict for the linear programs encountered in Markov decision processes under standard assumptions. 
Whereas existing works study convergence in function value, our results also cover convergence in the KL-divergence.  
Finally, the geometry of Fisher-Rao gradient flows or equivalently the entropic central path was recently described as the intersection of the feasible region with a toric variety~\cite{sturmfels2024toric}.

\subsection{Notation and terminology} 
For a finite set $\X$, we denote the \emph{free vector space} over $\X$ by $\R^\X = \{ \mu\colon \X \to \R \}$. 
Its elements can be identified with vectors $(\mu_x)_{x\in\X}$. 
Similarly, we denote the vectors with non-negative entries and positive entries by $\R_{\ge0}^\X$ and $\R_{>0}^\X$, respectively. 
For two elements $\mu, \nu\in\R^\X$ we denote the \emph{Hadamard product}, i.e., the entrywise product, between $\mu$ and $\nu$ by $\mu\odot\nu\in\R^\X$, so that $\mu\odot\nu(x) \coloneqq \mu(x)\nu(x)$.
The \emph{total variation} norm $\lVert \cdot \rVert_{\TV}\colon\R^\X\to\R$ is given by $\lVert \mu \rVert_{\TV} \coloneqq \frac12\sum_{x}\lvert \mu_x \rvert$. 
Finally, with $\mathds{1}_\X\in\R^\X$ we denote the all-one vector. 

A \emph{polyhedron} is a set $P = \{ \mu\in\R^\X : \ell_i(\mu) \ge 0 \text{ for } i=1, \dots, k \}\subseteq \R^\X$, where $\ell_i\colon\R^\X\to\R$ are affine linear functions for $i=1, \dots, k$. 
A bounded (and thus compact) polyhedron is called a \emph{polytope}. 
A polytope can be shown to be the convex hull of finitely many extreme points, which are called \emph{vertices} and which we denote by $\operatorname{Vert}(P)$. 
Two vertices $\mu_1, \mu_2\in\operatorname{Vert}(P)$ are called \emph{neighbors} if the subspace $\{ c\in\R^\X : c^\top \mu_1 = c^\top \mu_2 = \max_{\mu\in P} c^\top \mu \}$ has dimension $\lvert \X \rvert-1$. 
We denote the set of all neighbors of a vertex $\mu$ by $N(\mu) \subseteq\operatorname{Vert}(P)$.  
The \emph{affine space} $\operatorname{aff\,span}(P)$ of a polytope $P\subseteq\R^\X$ is the smallest affine subspace of $\R^\X$ containing $P$. 
The relative interior $\operatorname{int}(P)$ and boundary $\partial P$ of $P$ are the interior and boundary of $P$ in its affine hull. 
Finally, the \emph{tangent space} $TP$ of $P$ is given by the linear part of $\operatorname{aff\,span}(P)$. 

We call $\Delta_\X \coloneqq \left\{ \mu\in\R^\X_{\ge0} : \sum_{x} \mu_x=1 \right\}$ the \emph{probability simplex}. 
We say that $\mu\in\Delta_\X$ is absolutely continuous with respect to $\nu\in\Delta_\X$ if $\nu(x) = 0$ implies $\mu(x)=0$ and write $\mu \ll \nu$. 
We denote the expectation with respect to $\mu\in\Delta_\X$ by $\mathbb E_\mu$ and call $\chi^2(\mu, \nu)\coloneqq \mathbb E_\nu\left[\frac{(\mu(x)-\nu(x))^2}{\nu(x)^2}\right]$ the \emph{$\chi^2$-divergence} between $\mu$ and $\nu$. 
If $\mathbb Y$ is another finite set, we call the Cartesian product $\Delta_\X^\mathbb Y = \Delta_\X \cdot\ldots\cdot \Delta_\X$ the \emph{conditional probability polytope} and associate its elements with stochastic matrices $P\in\R^{\X\times\mathcal Y}_{\ge0}$ with $\sum_x P(x|y) = 1$. 

For a differentiable function $f\colon\Omega\to \R$ on an open subset $\Omega\subseteq\R^\X$ we denote the Euclidean gradient and Hessian of $f$ at $\mu\in\R^\X$ by $\nabla f(\mu)\in\R^{\X}$ and $\nabla^2 f(\mu)\in\R^{\X\times\X}$. 

Finally, for a differentiable curve $(c_t)_{t\in I}\subseteq\mathcal{M}$ defined on an interval $I\subseteq\R$ mapping to a manifold $\mathcal{M}$ we denote its time derivative by $\partial_t c_t$. 

\section{Preliminaries on Fisher-Rao Gradient Flows}
To gain insight into natural gradient descent methods, we study their time-continuous version which is given by $\partial_t\theta_t = -F(\theta_t)^+ \nabla f(\mu_{\theta_t})$, where $\mu_\theta$ is a parametrized measure model and $f(\mu)$ is an objective function and $F(\theta)$ is the Fisher-information matrix~\cite{amari1998natural}. 
The objective function can be a log-likelihood in the case of maximum likelihood estimation or a linear function in the case of reinforcement learning as we will see in \Cref{sec:state-action-NPG}. 
The Fisher-information matrix is closely connected to a specific Riemannian geometry, the Fisher-Rao metric, on the space of probability measures, which we introduce and discuss here. 
As we study gradient-based optimizers, we put a special emphasis on gradient flows with respect to the Fisher-Rao metric and provide a self-contained review of the properties of Fisher-Rao gradient flows that we require later. 
The results in this section can be generalized to a large class of Hessian geometries and -- apart from the central path property -- also to other objectives albeit with different proofs, for which we refer to~\cite{alvarez2004hessian, mueller2023thesis}. 

The \emph{Fisher-Rao metric} is a Riemannian metric on the positive orthant given by 
\begin{equation}
    g_{\mu}^{\FR}(v,w) \coloneqq \sum_{x\in \X} \frac{v_x w_x}{\mu_x} \quad \text{for all } v, w\in\R^{\X}, \mu\in\R^{\X}_{>0},
\end{equation}
where we denote the induced norm by $\lVert v \rVert_{g_\mu^{\FR}} \coloneqq {g_{\mu}^{\FR}(v,v)}^{\frac12}$. 
The Fisher-Rao metric was introduced in the seminal works of C.\ R.\ Rao~\cite{radhakrishna1945information, rao1987differential} to provide lower bounds on the statistical error in parameter estimation known as the Cramer-Rao bound. 
This geometric approach to statistical estimation has subsequently led to the development of the field of information geometry, where N.\ N.\ \v{C}encov characterized the Fisher-Rao metric as the unique Riemannian metric (up to scaling) that is invariant under sufficient statistics~\cite{vcencov1978algebraic, amari2016information, ay2017information}. 
Despite its central role in statistics, our main motivation for studying the Fisher-Rao metric is for its use in reinforcement learning, where it has been used to design natural gradient algorithms as well as trust region methods~\cite{amari1998natural, morimura2008new, peters2010relative}. 
Further, it is very closely related to entropic regularization in linear programming, which enjoys immense popularity, particularly in computational optimal transport~\cite{peyre2019computational, suarez2023perspectives}, see also~\cite{weed2018explicit} for a detailed discussion of entropy regularized linear programming. 

The Fisher-Rao metric is closely connected to the negative {Shannon entropy} 
\begin{equation}
    \phi(\mu) = -H(\mu) \coloneqq \sum_{x\in\X} \mu_x \log \mu_x  \quad \text{for all } \mu\in\R^{\X}_{>0}
\end{equation}
as it is induced by the Hessian of the (negative) entropy, meaning that we have $g_{\mu}^{\FR}(v,w) = v^\top \nabla^2 \phi(\mu) w$ for all $v, w\in\R^{\X}, \mu\in\R^{\X}_{>0}$. 
As such, the Fisher-Rao metric falls into the class of \emph{Hessian metrics} that have been studied in convex optimization; we refer to~\cite{alvarez2004hessian, mueller2023thesis} for general well-posedness and convergence results. 
An important concept in the analysis of Hessian gradient flows is the \emph{Bregman divergence} induced by $\phi$, which in the case of the negative entropy is given by the \emph{KL-divergence} 
\begin{equation}
    \KL (\mu, \nu) \coloneqq  \phi(\mu) - \phi(\nu) - \nabla \phi(\nu)(\mu-\nu) = \sum_{x\in \X} \mu_x \log \frac{\mu_x}{\nu_x} - \sum_{x\in \X} \mu_x + \sum_{x\in \X} \nu_x 
\end{equation}
for $\mu, \nu\in\R_{\ge0}^\X$ with $\mu\ll\nu$, where we use the common convention $0\log\frac00\coloneqq0$. 

Consider now a continuously differentiable function $f\colon\R_{\ge0}^\X\to\R$ that we assume to be differentiable on $\R_{>0}^\X$ that we want to optimize over a polytope $P= \R_{\ge0}^{\X}\cap\mathcal L$, where $\mathcal L$ is a linear space. 
We denote the gradient of $f\colon \R_{>0}^\X\to\R$ at $\mu\in\R_{>0}^\X$ with respect to the Fisher-Rao metric by $\nabla^{\FR}f(\mu)$ and call it the \emph{Fisher-Rao gradient}. 
Further, we denote the Fisher-Rao gradient of $f\colon\operatorname{int}(P)\to\R$ by $\nabla^{\FR}_P f(\mu)\in TP$, which is uniquely determined by 
\begin{equation}
    g_\mu^{\FR}(\nabla^{\FR}_P f(\mu),v) = df(\mu)v \quad \text{for all } v\in TP.
\end{equation}
Note that $\nabla^{\FR}_P f(\mu)$ is the projection of $\nabla^{\FR}f(\mu)$ with respect to the Fisher-Rao metric onto $TP$.
By examining the definition of the Fisher-Rao metric we see that this is equivalent to 
\begin{equation}
\label{eq:characterizationFRGradient}
    \langle \nabla^2\phi(\mu)\nabla^{\FR}_P  f(\mu), v \rangle = \langle \nabla f(\mu), v \rangle \quad \text{for all } v\in TP. 
\end{equation}
    
We say that $(\mu_t)_{t\in [0, T)} \subseteq \operatorname{int}(P)$ solves the \emph{Fisher-Rao gradient flow} if it solves the gradient flow with respect to the Fisher-Rao metric, i.e., if 
\begin{equation}\label{eq:FRGF}
    \partial_t \mu_t = \nabla^{\FR}_P  f(\mu_t) \quad \text{for all } t\in[0, T). 
\end{equation}
By using the characterization~\eqref{eq:characterizationFRGradient} of $\nabla^{\FR}_P f(\mu_t)$, we see that $(\mu_t)_{t\in [0, T)} \subseteq \operatorname{int}(P)$ solves the Fisher-Rao gradient flow~\eqref{eq:FRGF}
if and only if we have 
\begin{equation}\label{eq:explicitFRGF}
    \langle \nabla^2 \phi(\mu_t) \partial_t \mu_t, v \rangle = \langle \nabla f(\mu_t), v \rangle 
    \quad \text{for all } v\in TP, t\in[0, T). 
\end{equation}

In the remainder, we study linear programs and work in the following setting. 

\begin{setting}\label{setting:generalLP}
We consider a finite set $\X$ and a linear program 
    \begin{equation}\label{eq:LP}
        \max c^\top \mu \quad \text{subject to } \mu\in P, 
    \end{equation}
with cost $c\in\R^{\X}$ and feasible region $P = \R^\X_{\ge0} \cap  \mathcal L$ with $P \cap \R^\X_{>0}\ne\emptyset$, where $\mathcal L\subseteq\R^{\X}$ is an affine space. 
By $(\mu_t)_{t\in[0, T)}\subseteq\operatorname{int}(P)$ we denote a solution of the Fisher-Rao gradient flow~\eqref{eq:FRGF} with initial condition $\mu_0\in P \cap \R_{>0}^\X$ and potential $f(\mu) = c^\top \mu$, where $T\in\R_{\ge0}\cup\{+\infty\}$. 
\end{setting}

Fisher-Rao gradient flows are closely connected to the solutions of KL-regularized linear programs, $c^\top \mu - \lambda \KL (\mu, \mu_0)$. 
The family of solutions of the regularized problems parametrized by the regularization strength $\lambda$ is referred to as the (entropic) \emph{central path} in optimization~\cite{boyd2004convex}.  

\begin{proposition}[Central path property,\cite{alvarez2004hessian}]
    Consider Setting~\ref{setting:generalLP}. 
    Then $\mu_t$ is uniquely characterized by 
    \begin{equation}\label{eq:centralPath}
        \mu_t = \argmax\left\{ c^\top \mu - t^{-1} \KL (\mu, \mu_0) : \mu \in P \right\} \quad \text{for all } t\in(0,T). 
    \end{equation}
\end{proposition}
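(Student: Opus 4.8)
The plan is to show that for each fixed $t\in(0,T)$ the flow value $\mu_t$ satisfies the first-order optimality condition for the concave maximization problem on the right-hand side of \eqref{eq:centralPath}, and to note that this problem has a unique solution, so the condition identifies $\mu_t$ with it. First I would record the basic properties of the regularized objective $g_t(\mu)\coloneqq c^\top\mu - t^{-1}\KL(\mu,\mu_0)$ on $P$. Because $\mu_0\in\R_{>0}^\X$, the divergence $\KL(\cdot,\mu_0)$ is finite and continuous on the compact polytope $P$, so $g_t$ attains a maximum there; and because $\mu\mapsto\sum_{x}\mu_x\log\mu_x$ is strictly convex while the remaining terms of $\KL(\cdot,\mu_0)$ and $c^\top\mu$ are affine, $g_t$ is strictly concave on $P$, so this maximizer is unique. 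This already justifies that \eqref{eq:centralPath} is a well-posed characterization, and it remains to verify that $\mu_t$ is the maximizer.

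Next I would reduce this to an identity along the flow. Since $\mu_t\in\operatorname{int}(P)\subseteq\R_{>0}^\X$ and $P=\R_{\ge0}^\X\cap\mathcal L$ with $\mathcal L$ affine of linear part $T\mathcal L=TP$, every $\nu\in P$ gives $\nu-\mu_t\in TP$, so by concavity $\mu_t$ maximizes $g_t$ over $P$ as soon as $\langle\nabla g_t(\mu_t),v\rangle=0$ for all $v\in TP$. Using $\KL(\mu,\mu_0)=\phi(\mu)-\phi(\mu_0)-\nabla\phi(\mu_0)(\mu-\mu_0)$ one has $\nabla g_t(\mu)=c-t^{-1}\bigl(\nabla\phi(\mu)-\nabla\phi(\mu_0)\bigr)$, so the goal becomes
\[
  \langle\nabla\phi(\mu_t)-\nabla\phi(\mu_0),\,v\rangle = t\,\langle c,v\rangle\qquad\text{for all } v\in TP.
\]
To obtain this I would fix $v\in TP$, set $h_v(s)\coloneqq\langle\nabla\phi(\mu_s)-\nabla\phi(\mu_0)-sc,\,v\rangle$ on $[0,T)$, observe $h_v(0)=0$, and differentiate: since $s\mapsto\mu_s$ is $C^1$ with values in $\operatorname{int}(P)$, one gets $h_v'(s)=\langle\nabla^2\phi(\mu_s)\partial_s\mu_s-c,\,v\rangle$, which vanishes identically by the explicit form \eqref{eq:explicitFRGF} of the Fisher-Rao gradient flow applied with $\nabla f\equiv c$. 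Hence $h_v\equiv0$, which is precisely the displayed identity, so $\mu_t$ maximizes $g_t$ over $P$ and \eqref{eq:centralPath} follows.

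I expect the genuinely delicate point to be the reduction of optimality to orthogonality against $TP$: it uses that the flow stays in the relative interior of $P$ (no active inequality constraints) and that $P$ is a slice of an affine space (feasible directions span all of $TP$), together with the elementary but essential fact that $\KL(\cdot,\mu_0)$ is finite throughout $P$ precisely because $\mu_0$ is strictly positive; the differentiation-along-the-flow step should then be routine once the $C^1$-regularity of $t\mapsto\mu_t$ is in hand, as the flow solves an ODE with smooth right-hand side on $\operatorname{int}(P)$. As an alternative, one could instead show that $s\mapsto\mu_s^\star\coloneqq\argmax_P g_s$ is itself $C^1$, lies in $\operatorname{int}(P)$ thanks to the blow-up $\partial_{\mu_x}\KL(\mu,\mu_0)=\log(\mu_x/\mu_0(x))\to-\infty$ as $\mu_x\to0^+$, and satisfies the same ODE with the same initial value $\mu_0$, so that uniqueness of ODE solutions gives $\mu_t=\mu_t^\star$; the variational route above is slightly cheaper since it avoids proving interiority and differentiability of the central path directly.
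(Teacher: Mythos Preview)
Your proposal is correct and follows essentially the same approach as the paper: both reduce the claim to the identity $\langle\nabla\phi(\mu_t)-\nabla\phi(\mu_0),v\rangle=t\langle c,v\rangle$ for all $v\in TP$ and obtain it by differentiating along the flow via \eqref{eq:explicitFRGF}; the paper writes this as an integral, you write it as $h_v'\equiv0$ with $h_v(0)=0$, which is the same computation. One small imprecision: in Setting~\ref{setting:generalLP} the feasible set $P=\R_{\ge0}^\X\cap\mathcal L$ is only a polyhedron and need not be compact, so existence of the maximizer should be argued via coercivity of $\KL(\cdot,\mu_0)$ rather than compactness; this does not affect the rest of your argument.
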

    \begin{proof}
    Let $\hat{\mu}_t\in P$ denote the unique maximizer of $g(\mu)\coloneqq c^\top\mu - t^{-1} \KL (\mu, \mu_0)$ over $P$ for $t>0$, then surely $\hat{\mu}_t\in \operatorname{int}(P)$. 
    Thus, $\hat{\mu}_t$ is uniquely determined by $\langle \nabla g(\hat{\mu}_t), v\rangle = 0$ for all $v\in TP$. 
    Direct computation yields $\nabla g({\mu}) = c - t^{-1} (\nabla\phi(\mu) - \nabla\phi(\mu_0))$ and hence $\hat{\mu}_t$ is uniquely determined by 
    \begin{equation*}
        t\langle c, v\rangle = \langle \nabla\phi(\hat{\mu}_t) - \nabla\phi(\mu_0), v\rangle \quad \text{for all } v\in TP. 
    \end{equation*} 
    On the other hand, for the gradient flow, we can use~\eqref{eq:explicitFRGF} and compute for $v\in TP$ 
    \begin{align*}
        \langle \nabla\phi(\mu_t) - \nabla\phi(\mu_0), v \rangle & = \int_{0}^t \partial_s \langle \nabla \phi(\mu_s), v \rangle \D s 
        = \int_{0}^t \langle \nabla^2 \phi(\mu_s)\partial_s \mu_s, v \rangle \D s 
        \\ & = \int_{0}^t \langle \nabla f(\mu_s) \mu_s, v \rangle \D s 
        = \int_{0}^t \langle c, v \rangle \D s = t \langle c, v \rangle . 
    \end{align*}
    This shows $\mu_t = \hat{\mu}_t$ as claimed. 
\end{proof}

We can use the central path property to show $O(t^{-1})$ convergence. 
The following corollary can be generalized to arbitrary convex objectives~\cite{alvarez2004hessian}. 

\begin{corollary}[Sublinear convergence rate,\cite{alvarez2004hessian}]\label{prop:sublinearRate}
    Consider Setting~\ref{setting:generalLP} and assume that the linear program~\eqref{eq:LP} admits a solution $\mu^\star\in P$. 
    Then for $\mu\in P$ it holds that 
\begin{equation}\label{eq:sublinearConvergence}
        c^\top \mu^\star - c^\top\mu_t \le \frac{\KL (\mu^\star, \mu) - \KL (\mu_t, \mu)}t \le \frac{\KL (\mu^\star, \mu_0)}t \quad \text{for all } t\in[0, T). 
    \end{equation}
\end{corollary}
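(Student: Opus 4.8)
The plan is to use the central path characterization~\eqref{eq:centralPath} together with the defining optimality of $\mu_t$ for the regularized problem, and then let the regularizer telescope. Concretely, the first step is to exploit that $\mu_t$ maximizes $g(\mu) = c^\top \mu - t^{-1}\KL(\mu,\mu_0)$ over $P$; but for the chain of inequalities as stated we actually want to compare against $\mu\in P$ rather than $\mu_0$, so I would instead start from a ``three-point'' identity for the Bregman divergence $\KL$. Recall that for the negative-entropy potential $\phi$ one has the generalized Pythagorean identity
\begin{equation*}
    \KL(\mu^\star, \mu) = \KL(\mu^\star, \mu_t) + \KL(\mu_t, \mu) + \langle \nabla\phi(\mu_t) - \nabla\phi(\mu), \mu^\star - \mu_t\rangle,
\end{equation*}
valid for $\mu^\star, \mu_t \in P$ and $\mu \in \operatorname{int}(P)$ since then all points are absolutely continuous with respect to $\mu$.

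The second step is to control the inner-product term using the flow equation. From the computation already carried out in the proof of the central path property, $\langle \nabla\phi(\mu_t) - \nabla\phi(\mu_0), v\rangle = t\langle c, v\rangle$ for all $v\in TP$; applying this at both time $t$ and time $0$ (where at time $0$ it is trivial) and subtracting, or more directly taking $\mu = \mu_s$ for the relevant comparison, gives $\langle \nabla\phi(\mu_t) - \nabla\phi(\mu), v\rangle = t\langle c, v\rangle - s\langle c,v\rangle$ when $\mu = \mu_s$ lies on the path. To keep the statement fully general for arbitrary $\mu\in P$, the cleanest route is: since $\mu^\star - \mu_t \in TP$, we have $\langle \nabla\phi(\mu_t)-\nabla\phi(\mu_0), \mu^\star - \mu_t\rangle = t\langle c,\mu^\star-\mu_t\rangle = t(c^\top\mu^\star - c^\top\mu_t)$. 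This already handles the case $\mu = \mu_0$; for general $\mu$ one reorganizes the three-point identity around $\mu_0$ instead, writing $\KL(\mu^\star,\mu)-\KL(\mu_t,\mu)$ and $\KL(\mu^\star,\mu_0)-\KL(\mu_t,\mu_0)$ and noting the difference of these two expressions is exactly $\langle\nabla\phi(\mu_0)-\nabla\phi(\mu), \mu^\star-\mu_t\rangle$, but since $\mu,\mu_0$ need not be comparable in general I would actually just prove the first inequality for arbitrary $\mu\in\operatorname{int}(P)$ with $\mu^\star \ll \mu$ and the second (the bound by $\KL(\mu^\star,\mu_0)/t$) only for the path starting at $\mu_0$, i.e. restricting the middle term to $\mu = \mu_0$.

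The third step assembles the bound: from the three-point identity with $\mu = \mu_0$,
\begin{equation*}
    \KL(\mu^\star,\mu_0) - \KL(\mu_t,\mu_0) = \KL(\mu^\star,\mu_t) + t(c^\top\mu^\star - c^\top\mu_t) \ge t(c^\top\mu^\star - c^\top\mu_t),
\end{equation*}
using non-negativity of $\KL(\mu^\star,\mu_t)$. Dividing by $t$ yields $c^\top\mu^\star - c^\top\mu_t \le (\KL(\mu^\star,\mu_0) - \KL(\mu_t,\mu_0))/t \le \KL(\mu^\star,\mu_0)/t$, again by non-negativity of $\KL$; this is exactly~\eqref{eq:sublinearConvergence} in the special case $\mu = \mu_0$, and the general-$\mu$ version follows by the same identity with $\mu_0$ replaced by $\mu$ once one checks the absolute-continuity hypotheses ($\mu^\star, \mu_t \ll \mu$ for $\mu\in\operatorname{int}(P)$).

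The main obstacle I anticipate is purely bookkeeping around absolute continuity and the validity of the three-point identity at boundary points $\mu^\star$: one must verify that $\KL(\mu^\star,\mu_t)$ and $\KL(\mu^\star,\mu_0)$ are finite (which holds since $\mu_t,\mu_0\in\R^\X_{>0}$ so everything is absolutely continuous with respect to them) and that the gradient term $\langle\nabla\phi(\mu_t)-\nabla\phi(\mu_0), \mu^\star-\mu_t\rangle$ is well-defined even though $\mu^\star$ may have zero entries — this is fine because $\nabla\phi$ is evaluated only at the interior points $\mu_t,\mu_0$. Beyond that the argument is a short manipulation; no genuinely hard analytic estimate is needed, which matches the fact that the corollary is attributed to~\cite{alvarez2004hessian} and stated as a routine consequence of the central path property.
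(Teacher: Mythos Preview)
Your argument is correct for the only case that is actually needed, namely $\mu=\mu_0$, but it takes a longer route than the paper. The paper's proof is a single line: by the central path property~\eqref{eq:centralPath}, $\mu_t$ maximizes $\nu\mapsto c^\top\nu - t^{-1}\KL(\nu,\mu_0)$ over $P$, so comparing the value at $\nu=\mu_t$ with the value at $\nu=\mu^\star$ gives
\[
c^\top\mu_t - t^{-1}\KL(\mu_t,\mu_0)\ \ge\ c^\top\mu^\star - t^{-1}\KL(\mu^\star,\mu_0),
\]
and rearranging yields both inequalities simultaneously. Your three-point identity together with the integrated flow equation $\langle\nabla\phi(\mu_t)-\nabla\phi(\mu_0),v\rangle = t\langle c,v\rangle$ is in effect a rederivation of that optimality inequality; it does buy you the sharper \emph{equality}
\[
\KL(\mu^\star,\mu_0)-\KL(\mu_t,\mu_0)=\KL(\mu^\star,\mu_t)+t\big(c^\top\mu^\star-c^\top\mu_t\big),
\]
but this extra information is not used anywhere.

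One genuine slip: your closing claim that ``the general-$\mu$ version follows by the same identity with $\mu_0$ replaced by $\mu$'' is not correct. The identity $\langle\nabla\phi(\mu_t)-\nabla\phi(\mu_0),v\rangle = t\langle c,v\rangle$ is tied to the initial point $\mu_0$ of the flow and fails if $\mu_0$ is replaced by an arbitrary $\mu\in\operatorname{int}(P)$; indeed, the first inequality in~\eqref{eq:sublinearConvergence} can fail for $\mu$ close to $\mu^\star$ (take $P=\Delta_\X$ and let $\mu\to\mu^\star$, so that $\KL(\mu^\star,\mu)-\KL(\mu_t,\mu)\to-\infty$). The paper's own one-line proof has the same imprecision, writing $\mu$ where $\mu_0$ is meant. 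The only place in the paper where a reference point other than $\mu_0$ is used is $\mu=\mu_{t-h}$ in the proof of \Cref{prop:ratesTrajectories}, and there the semigroup property of the flow makes $\mu_{t-h}$ a legitimate initial condition with time horizon $h$, so no genuinely general $\mu$ is ever needed.
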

\begin{proof}
    We have $c^\top \mu_t - t^{-1} \KL (\mu_t, \mu) \ge c^\top \mu^\star - t^{-1} \KL (\mu^\star, \mu)$ by the central path property.
    Rearranging yields the result. 
\end{proof}

One can use the central path property to show the long-time existence of Fisher-Rao gradient flows. 
Again, the following result can be generalized to a large class of Hessian geometries and potentials $f$, see~\cite{alvarez2004hessian, mueller2023thesis}, albeit with more delicate proofs. 

\begin{theorem}[Well-posedness of  FR GFs,\cite{alvarez2004hessian}]
    Consider Setting~\ref{setting:generalLP}. 
    Then there exists a unique global solution $(\mu_{t})_{t\ge0}\subseteq\operatorname{int}(P)$ of the Fisher-Rao gradient flow~\eqref{eq:FRGF}. 
\end{theorem}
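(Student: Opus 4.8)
The plan is to obtain the global solution by combining a local existence result for the ODE with an a priori bound that prevents the trajectory from reaching the relative boundary $\partial P$ in finite time, and then to derive uniqueness from the central path characterization.

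First I would set up the ODE rigorously. Equation~\eqref{eq:explicitFRGF} determines $\partial_t\mu_t$ implicitly through the linear system $\langle \nabla^2\phi(\mu_t)\partial_t\mu_t,v\rangle = \langle c,v\rangle$ for all $v\in TP$, together with the constraint $\partial_t\mu_t\in TP$. Since $\nabla^2\phi(\mu) = \operatorname{diag}(1/\mu_x)_{x}$ is positive definite on $\R^\X_{>0}$, the bilinear form $(v,w)\mapsto \langle\nabla^2\phi(\mu)v,w\rangle$ is an inner product on $TP$, so the Fisher-Rao gradient $\nabla^{\FR}_P f(\mu)$ is the unique element of $TP$ representing the functional $v\mapsto\langle c,v\rangle$; it depends smoothly (indeed real-analytically) on $\mu\in\operatorname{int}(P)$. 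Hence the right-hand side of~\eqref{eq:FRGF} is a locally Lipschitz (in fact $C^\infty$) vector field on the open manifold $\operatorname{int}(P)$, and the Picard–Lindelöf theorem gives a unique maximal solution $(\mu_t)_{t\in[0,T)}$ with $\mu_0\in P\cap\R^\X_{>0}$, for some $T\in(0,+\infty]$.

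The heart of the argument is to show $T=+\infty$. The standard escape-time dichotomy says that if $T<\infty$ then $\mu_t$ must leave every compact subset of $\operatorname{int}(P)$ as $t\uparrow T$; since $P$ is compact, this forces $\mu_t$ to approach $\partial P$, i.e. $\min_x (\mu_t)_x \to 0$. I would rule this out using the central path property: by the already-established Proposition, for every $t\in(0,T)$ the point $\mu_t$ maximizes $g_t(\mu) = c^\top\mu - t^{-1}\KL(\mu,\mu_0)$ over $P$, so in particular $g_t(\mu_t)\ge g_t(\mu_0) = c^\top\mu_0$, which rearranges to
\begin{equation*}
\KL(\mu_t,\mu_0) \le t\,(c^\top\mu_t - c^\top\mu_0) \le t\cdot 2\lVert c\rVert_\infty \, \lVert\mu_t-\mu_0\rVert_{\TV} \le C\,t
\end{equation*}
for a constant $C$ depending only on $c$ and the diameter of $P$. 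Thus $\KL(\mu_t,\mu_0)$ stays bounded on any finite time interval $[0,T)$ with $T<\infty$. But the sublevel sets $\{\mu\in P : \KL(\mu,\mu_0)\le R\}$ are compact subsets of $\operatorname{int}(P)$: as $\mu$ approaches $\partial P$ some coordinate $\mu_x\to 0$ while $(\mu_0)_x>0$ (recall $\mu_0\in\R^\X_{>0}$), so the term $\mu_x\log(\mu_x/(\mu_0)_x)\to 0$ but one of the other contributions cannot compensate — more carefully, $\KL(\cdot,\mu_0)$ is lower semicontinuous on $P$ and finite-valued, and its sublevel sets are closed in $P$ hence compact; the point is that they do not touch $\partial P$ because... actually one must be slightly careful since $\KL(\mu,\mu_0)$ stays finite on all of $P$. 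The cleaner route is: on the compact set $\partial P$ the function $c^\top\mu$ attains a maximum $v^\star_\partial < v^\star \coloneqq \max_{P}c^\top\mu$ is false in general, so instead I use that $\mu_t$ is the central path point and invoke the explicit computation $\langle\nabla\phi(\mu_t)-\nabla\phi(\mu_0),v\rangle = t\langle c,v\rangle$ from the proof of the Proposition: this pins down $\log(\mu_t)_x - \log(\mu_0)_x$ up to an element of $(TP)^\perp$, and combined with $\mu_t\in P$ (a compact set) gives a uniform two-sided bound $(\mu_t)_x \ge \varepsilon(T) > 0$ on $[0,T)$ for finite $T$. This contradicts $\mu_t\to\partial P$, so $T=+\infty$.

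The main obstacle is precisely this a priori lower bound on the coordinates $(\mu_t)_x$ — the KL-bound alone is not quite enough because $\KL(\cdot,\mu_0)$ does not blow up at $\partial P$, so one genuinely needs the finer information in the first-order optimality condition (the "toric" relation $\mu_t \odot$-expression) rather than just a sublevel argument; one shows that if $(\mu_t)_x\to 0$ along a sequence $t_n\uparrow T<\infty$ then the corresponding Lagrange-multiplier normalization forces $c^\top\mu_{t_n}$ to grow like $t_n^{-1}\log((\mu_0)_x/(\mu_{t_n})_x)\to+\infty$, contradicting boundedness of $c^\top\mu$ on the compact polytope $P$. Uniqueness of the global solution is then immediate, either from uniqueness of the maximal ODE solution or directly from the central path characterization~\eqref{eq:centralPath}, which determines $\mu_t$ for every $t>0$.
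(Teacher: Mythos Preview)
Your overall strategy matches the paper's: invoke Picard--Lindel\"of for local existence and uniqueness, then use the central path property to rule out finite-time escape to $\partial P$. You also correctly spot the subtlety that $\KL(\cdot,\mu_0)$ remains finite on all of $P$, so a mere sublevel-set argument does not suffice.

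However, your final contradiction is not justified as written: from the first-order relation $\log\mu_t-\log\mu_0-tc\in(TP)^\perp$ it does not follow that $c^\top\mu_{t_n}$ must behave like $t_n^{-1}\log\big((\mu_0)_x/(\mu_{t_n})_x\big)$. That relation constrains $\log\mu_t$ modulo $(TP)^\perp$; it does not by itself bound the dual variable $w_t\in(TP)^\perp$ on a finite interval, nor does it tie the blow-up of a single coordinate of $\log\mu_t$ to the scalar $c^\top\mu_t$. Your intermediate claim that this relation ``combined with $\mu_t\in P$ gives a uniform two-sided bound $(\mu_t)_x\ge\varepsilon(T)>0$'' is exactly what needs proving and is asserted rather than shown.

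The paper sidesteps all of this with a much shorter observation: for every fixed $t>0$, the maximizer of $\mu\mapsto c^\top\mu - t^{-1}\KL(\mu,\mu_0)$ over $P$ necessarily lies in $\operatorname{int}(P)$, because at any boundary point the directional derivative of $-\KL(\cdot,\mu_0)$ pointing into the interior is $+\infty$ (the $\mu_x\log\mu_x$ term has derivative $\log\mu_x+1\to-\infty$) while $c$ is bounded. Since the central path $t\mapsto\hat\mu_t$ is therefore defined and interior for all $t>0$, and the local flow coincides with it on $[0,T)$, the flow cannot hit $\partial P$ in finite time. This is the missing clean step; your proposal circles around it without landing on it.
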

\begin{proof}
    The local existence and uniqueness follow from the Picard-Lindelöf theorem~\cite{teschl2024ordinary}. Hence, it suffices to show that the Fisher-Rao gradient flow does not hit the boundary $\partial P$ in finite time. 
    By the central path property, this is equivalent to the statement that the solutions of all KL-regularized problems~\eqref{eq:centralPath} lie in the interior $\operatorname{int}(P)$ of the polyhedron, which can be easily checked. 
\end{proof}

\section{Convergence of Fisher-Rao Gradient Flows}\label{sec:linearConvergeFRGF}
We have seen that Fisher-Rao gradient flows converge globally at a sublinear rate $O(t^{-1})$. 
We now build on this analysis and show that once the gradient flow enters a vicinity of the optimizer, it converges at a quasi-linear rate $O(t^\kappa e^{-\Delta t})$, where $\Delta>0$ depends on the geometry of the linear program and $\kappa>0$ depends on the initial condition $\mu_0$. 
Note that this yields $O(e^{-c t})$ convergence for all $c<\Delta$ and hence we also simply talk of a linear convergence rate.
We consider linear programs of the following form. 
\begin{setting}\label{setting:LP-inside-Simplex}
We consider a finite set $\X$ and a linear program 
    \begin{equation}\label{eq:LP-inside-simplex}
        \max c^\top \mu \quad \text{subject to } \mu\in P, 
    \end{equation}
with cost $c\in\R^{\X}$ and feasible region $P = \Delta_\X \cap  \mathcal L$ with $P \cap \R^\X_{>0}\ne\emptyset$, where $\mathcal L\subseteq\R^{\X}$ is an affine space. 
By $(\mu_t)_{t\ge0}\subseteq\operatorname{int}(P)$ we denote the solution of the Fisher-Rao gradient flow~\eqref{eq:FRGF} with initial condition $\mu_0\in P \cap \R_{>0}^\X$ and the potential $f(\mu) = c^\top \mu$. 
\end{setting}

The following result is the main contribution of this article, where we defer the proof to \Cref{subsec:linearConvergence}. 
We first establish it under the assumption that the linear program~\eqref{eq:LP-inside-simplex} admits a unique solution and provide a generalization in \Cref{thm:convergence-without-uniqueness}. 

\begin{restatable}[Linear convergence of Fisher-Rao GFs of LPs]{theorem}{convergenceFRGF}
\label{thm:convergenceFRGF}
Consider Setting~\ref{setting:LP-inside-Simplex} and assume that the linear program~\eqref{eq:LP-inside-simplex} admits a unique solution $\mu^\star\in P$. 
Let 
\begin{align}\label{eq:Delta}
    \Delta \coloneqq \min \left\{ \frac{c^\top \mu^\star - 
    c^\top \mu}{\lVert \mu^\star - \mu \rVert_{\TV }} : \mu\in N(\mu^\star)  \right\},
\end{align}
where $N(\mu^\star)$ denotes the set of neighboring vertices of $\mu^\star$ and set 
\begin{equation}\label{eq:definitiont_0}
    t_0 \coloneqq \frac{2 \KL (\mu^\star, \mu_0)}{\Delta \cdot \min\{ \mu^\star_x : \mu^\star_x>0 \}} . 
\end{equation}
Then for any $t\ge t_0$ we have 
\begin{align}\label{eq:linearConvergenceKL}
    \KL (\mu^\star, \mu_t) \le \KL (\mu^\star, \mu_0) \exp\left( -\Delta(t-t_0) +  2 t_0 \Delta 
    \log\left(\frac{t+t_0}{2t_0}\right) \right), 
\end{align}
as well as 
\begin{align}\label{eq:linearConvergenceValue}
    c^\top \mu^\star - c^\top \mu_t \le \Delta \KL (\mu^\star, \mu_0) \exp\left( -\Delta(t-t_0) +  2 t_0 \Delta \log\left(\frac{t+t_0}{2t_0}\right) \right). 
\end{align}
\end{restatable}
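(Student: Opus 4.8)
The plan is to combine the central path property with a sharp lower bound on the suboptimality gap $c^\top\mu^\star - c^\top\mu$ in terms of the KL-divergence $\KL(\mu^\star,\mu)$, valid once $\mu$ is close enough to $\mu^\star$. The quantity $\Delta$ in \eqref{eq:Delta} is exactly the slope of the linear objective along the edges emanating from the optimal vertex $\mu^\star$; since $\mu^\star$ is the unique optimizer, for every $\mu\in P$ one has $c^\top\mu^\star - c^\top\mu \ge \Delta\lVert\mu^\star-\mu\rVert_{\TV}$ by writing $\mu$ as a convex combination of vertices and using that the worst slope is attained on a neighboring vertex. The first key step is therefore to upgrade this linear-in-$\TV$ bound to a linear-in-KL bound near $\mu^\star$: using Pinsker-type control in the reverse direction, namely that $\KL(\mu^\star,\mu)$ is bounded below by a constant (depending on $\min\{\mu^\star_x:\mu^\star_x>0\}$) times $\lVert\mu^\star-\mu\rVert_{\TV}$ when $\mu$ is within a fixed KL-ball of $\mu^\star$ — roughly $\KL(\mu^\star,\mu)\ge \tfrac12\min\{\mu^\star_x:\mu^\star_x>0\}\lVert\mu^\star-\mu\rVert_{\TV}^2$ fails at that rate but a linear lower bound $\KL(\mu^\star,\mu)\ge c\,\lVert\mu^\star-\mu\rVert_{\TV}$ does hold on the boundary of such a ball. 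The cleanest route is: for $\mu$ on a face not containing $\mu^\star$ in its closure the ratio $\KL(\mu^\star,\mu)/\lVert\mu^\star-\mu\rVert_{\TV}$ is bounded below, and the definition of $t_0$ in \eqref{eq:definitiont_0} is precisely calibrated so that the sublinear bound from \Cref{prop:sublinearRate} forces $\mu_t$ into the regime where the desired inequality $c^\top\mu^\star-c^\top\mu_t \ge \Delta\cdot(\text{something})\cdot\KL(\mu^\star,\mu_t)$ holds.

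The second key step is to set up the differential inequality for the flow. Define $E(t)\coloneqq\KL(\mu^\star,\mu_t)$ and differentiate: using \eqref{eq:explicitFRGF} with $v=\mu^\star-\mu_t\in TP$ and the identity $\partial_t\KL(\mu^\star,\mu_t) = -\langle\nabla\phi(\mu_t)-\nabla\phi(\mu^\star)?\ldots\rangle$ — more precisely $\partial_t\KL(\mu^\star,\mu_t)=\langle\nabla^2\phi(\mu_t)\partial_t\mu_t, \mu_t-\mu^\star\rangle = \langle c,\mu_t-\mu^\star\rangle = -(c^\top\mu^\star-c^\top\mu_t)$. Thus $E'(t) = -(c^\top\mu^\star-c^\top\mu_t)$, which is already the gap. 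Combining with the lower bound $c^\top\mu^\star-c^\top\mu_t\ge \alpha(t)\,E(t)$ from step one — where $\alpha(t)$ is $\Delta$ times a factor that degrades because the effective lower bound on $\min_x\mu_{t,x}$ weakens as we only control it through the sublinear KL bound $E(t)\le \KL(\mu^\star,\mu_0)\cdot t_0 s /\,\ldots$ — yields $E'(t)\le -\alpha(t)E(t)$, and Grönwall gives $E(t)\le E(t_0)\exp(-\int_{t_0}^t\alpha(s)\,ds)$. One then checks that $\int_{t_0}^t\alpha(s)\,ds \ge \Delta(t-t_0) - 2t_0\Delta\log\frac{t+t_0}{2t_0}$, the logarithmic correction coming exactly from the $1/s$-type decay of the lower bound on the coordinates of $\mu_s$ along the flow (the coordinates of $\mu_s$ that vanish at $\mu^\star$ decay, so $\min$ over the support of $\mu^\star$ of $\mu_{s,x}$ is bounded below by $\min\{\mu^\star_x\} - O(1/s)$, integrating to the log term). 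Finally, \eqref{eq:linearConvergenceValue} follows from \eqref{eq:linearConvergenceKL} by plugging back into the gap estimate $c^\top\mu^\star-c^\top\mu_t\le \Delta\,\KL(\mu^\star,\mu_t)$ valid on the same regime, or directly from $|E'(t)|$ with the upper bound on $E$.

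The main obstacle I expect is step one: pinning down the exact form of the lower bound on $c^\top\mu^\star - c^\top\mu_t$ in terms of $\KL(\mu^\star,\mu_t)$ with the precise constant that produces the stated $t_0$ and the $2t_0\Delta\log\frac{t+t_0}{2t_0}$ correction. This requires carefully tracking (a) how the $\TV$-to-KL comparison constant depends on $\min\{\mu^\star_x:\mu^\star_x>0\}$ and the current distance to $\mu^\star$, and (b) how the sublinear bound \eqref{eq:sublinearConvergence} quantitatively forces $\mu_t$ to have coordinates bounded away from $0$ on the support of $\mu^\star$ — essentially, $\KL(\mu^\star,\mu_t)\le\KL(\mu^\star,\mu_0)/t\cdot(\text{factor})$ combined with $\mu_{t,x}\ge\mu^\star_x e^{-\KL(\mu^\star,\mu_t)/\mu^\star_x}$ or a similar coordinatewise bound. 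Getting the bookkeeping of these two estimates to line up so that the exponent integrates cleanly to the displayed expression is the delicate part; the rest is Grönwall and the central path property, both of which are already available.
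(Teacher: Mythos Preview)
Your overall architecture is right and matches the paper: differentiate $E(t)=\KL(\mu^\star,\mu_t)$ to get $E'(t)=-(c^\top\mu^\star-c^\top\mu_t)$, use the edge-slope inequality $c^\top\mu^\star-c^\top\mu\ge\Delta\lVert\mu^\star-\mu\rVert_{\TV}$, convert this into a differential inequality $E'(t)\le-\alpha(t)E(t)$ via a KL--TV comparison, use the sublinear rate to enter the good regime at time $t_0$, and integrate. The paper does exactly this, and the factor $\alpha(t)=\Delta\cdot\frac{t-t_0}{t+t_0}$ integrates to precisely the exponent you predict.

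However, you have the direction of the KL--TV comparison backward, and this is the step that actually needs work. To pass from $c^\top\mu^\star-c^\top\mu_t\ge\Delta\lVert\mu^\star-\mu_t\rVert_{\TV}$ to $c^\top\mu^\star-c^\top\mu_t\ge\alpha(t)\KL(\mu^\star,\mu_t)$ you need an \emph{upper} bound $\KL(\mu^\star,\mu_t)\le C_t\,\lVert\mu^\star-\mu_t\rVert_{\TV}$, not a lower bound. You repeatedly write ``$\KL(\mu^\star,\mu)$ is bounded below by a constant times $\lVert\mu^\star-\mu\rVert_{\TV}$'' and ``$\KL(\mu^\star,\mu)\ge c\lVert\mu^\star-\mu\rVert_{\TV}$'', which is the wrong way around and would give you nothing. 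The needed inequality is a local \emph{linear} upper bound on KL by TV (not a Pinsker-type quadratic bound in either direction): the paper proves that whenever $\lVert\mu^\star-\nu\rVert_\infty<\delta\coloneqq\min\{\mu^\star_x:\mu^\star_x>0\}$ one has
\[
\KL(\mu^\star,\nu)\le\frac{\delta+\lVert\mu^\star-\nu\rVert_\infty}{\delta-\lVert\mu^\star-\nu\rVert_\infty}\,\lVert\mu^\star-\nu\rVert_{\TV},
\]
via a termwise estimate $\mu^\star_x\log(\mu^\star_x/\nu_x)\le(\mu^\star_x-\nu_x)\mu^\star_x/\nu_x$ and a careful split over the support of $\mu^\star$. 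This is the substantive lemma. The sublinear rate is then used not to bound coordinates of $\mu_t$ from below on the support of $\mu^\star$, but simply to control $\lVert\mu^\star-\mu_t\rVert_\infty\le 2\lVert\mu^\star-\mu_t\rVert_{\TV}\le\frac{2}{\Delta}(c^\top\mu^\star-c^\top\mu_t)\le\frac{2\KL(\mu^\star,\mu_0)}{\Delta\,t}$, which is how the definition of $t_0$ and the factor $\frac{t-t_0}{t+t_0}$ arise.

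A smaller point: your route to \eqref{eq:linearConvergenceValue} via ``$c^\top\mu^\star-c^\top\mu_t\le\Delta\KL(\mu^\star,\mu_t)$'' is not available---the inequality you establish goes the other way. The paper obtains the value bound by restarting the sublinear estimate from time $t-h$ and combining with the already-proved KL bound.
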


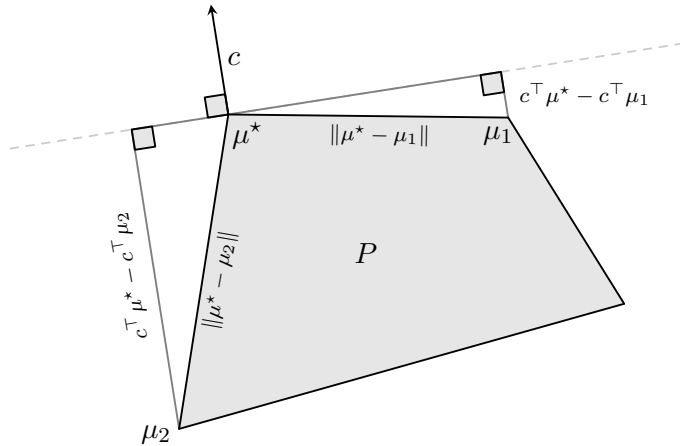
\begin{figure}
    \centering
    \definecolor{wrwrwr}{rgb}{0.7,0.7,0.7}
    \begin{tikzpicture}[line cap=round,line join=round,x=1.0cm,y=1.0cm, scale=0.6]
        \clip(-11.6,-1.3) rectangle (3.3,9.);
        \draw [line width=0.25mm,color=black,opacity=0.2,domain=-11.6:4.4,dashed] plot(\x,{(-17.320205929578584-0.3819978698484281*\x)/-2.427231763738467});
        \draw [line width=0.25mm,color=gray] (-7.896750832029924,-0.8842126265006686)-- (-8.937568621959711,5.729190744005193);
        \draw [line width=0.25mm,color=gray] (-0.6807109624408225,5.994365448617686)-- (-0.8395532985757674,7.003656845572703);
        \draw [line width=0.25mm,color=gray] (-8.937568621959711,5.729190744005193)-- (-0.8395532985757674,7.003656845572703);
        \fill[line width=0.25mm,fill=black,fill opacity=0.1] (-7.896750832029924,-0.8842126265006686) -- (-6.818002130151572,6.062768236261533) -- (-0.6807109624408225,5.994365448617686) -- (1.8625382055292548,1.8801528221170187) -- cycle;
        \draw[line width=0.25mm,fill=black,fill opacity=0.1] (-6.888382070533459,6.509965564594067) -- (-7.335579398865993,6.43958562421218) -- (-7.265199458484106,5.992388295879646) -- (-6.818002130151572,6.062768236261533) -- cycle; 
        \draw[line width=0.25mm,fill=black,fill opacity=0.1] (-8.867188681577824,5.281993415672659) -- (-8.41999135324529,5.352373356054546) -- (-8.490371293627177,5.79957068438708) -- (-8.937568621959711,5.729190744005193) -- cycle; 
        \draw[line width=0.25mm,fill=black,fill opacity=0.1] (-1.2867506269083013,6.933276905190816) -- (-1.2163706865264143,6.486079576858282) -- (-0.7691733581938804,6.556459517240169) -- (-0.8395532985757674,7.003656845572703) -- cycle; 
        \draw [line width=0.25mm] (-7.896750832029924,-0.8842126265006686)-- (-6.818002130151572,6.062768236261533);
        \draw [line width=0.25mm] (-6.818002130151572,6.062768236261533)-- (-0.6807109624408225,5.994365448617686);
        \draw [line width=0.25mm] (-0.6807109624408225,5.994365448617686)-- (1.8625382055292548,1.8801528221170187);
        \draw [line width=0.25mm] (1.8625382055292548,1.8801528221170187)-- (-7.896750832029924,-0.8842126265006686);
        \draw [-stealth,line width=0.25mm] (-6.818002130151572,6.062768236261533) -- (-7.2,8.49);
        \draw[color=black] (-6.4,5.6) node {$\mu^\star$};
        \draw[color=black] (-8.4,-1) node {$\mu_2$};
        \draw[color=black] (-0.9,5.6) node {$\mu_1$};
        \draw[color=black] (-3.8,3.) node {$P$};
        \draw[color=black] (-6.7,7.3) node {$c$};
        \begin{scriptsize}
        \node[inner sep=0pt, rotate=-1] (l1) at (-3.5,5.6) {$\lVert \mu^\star - \mu_1 \rVert$};
        \node[inner sep=0pt, rotate=80] (l1) at (-7.,2.4) {$\lVert \mu^\star - \mu_2 \rVert$};
        \node[inner sep=0pt, rotate=100] (l1) at (-9,2.5) {$c^\top\mu^\star - c^\top \mu_2$};
        \node[inner sep=0pt, rotate=0] (l1) at (1,6.5) {$c^\top\mu^\star - c^\top \mu_1$};
        \end{scriptsize}
    \end{tikzpicture}
    \caption{Visualization of the suboptimality gap $\Delta$ appearing in \Cref{thm:convergenceFRGF} associated to the linear program~\eqref{eq:LP-inside-simplex}; note that $\Delta$ deteriorates when $c$ is almost orthogonal to a face of $P$.}
    \label{fig:delta}
\end{figure}

The constant $\Delta$ depends on the geometry of the linear program, see~\Cref{fig:delta}. 
Indeed, the quotient $\frac{c^\top \mu^\star - c^\top \mu}{\lVert \mu^\star - \mu \rVert_{\TV }}$ is the slope of the objective along the edge $\mu^\star-\mu$. Consequently, $\Delta$ decreases when the cost $c$ is closer to orthogonal to a face of $P$. 

Using the central path property of Fisher-Rao gradient flows and initializing at the maximum entropy distribution in $P$ yields the following result. 

\begin{restatable}[Entropic regularization error]{corollary}{regularizationError}\label{cor:regularizationError}
    Consider Setting~\ref{setting:LP-inside-Simplex} and assume that the linear program~\eqref{eq:LP-inside-simplex} admits a unique solution $\mu^\star\in P$. 
For $t>0$ denote by $\mu^\star_t$ 
    the unique solution of the entropy-regularized linear program
    \begin{equation}\label{eq:LP:reg}
        \max c^\top \mu + t^{-1} H(\mu) \quad \text{subject to } \mu\in P , 
    \end{equation}
where $H$ denotes the Shannon entropy. 
    Then for any $t\ge t_0$ we have  
\begin{align}\label{eq:linearConvergenceKL-regularization}
        \KL (\mu^\star, \mu_t^\star) \le 
        R_H\exp\left(-\Delta(t-t_0) +  2 t_0 \Delta \log\left(\frac{t+t_0}{2t_0}\right) \right), 
    \end{align}
    as well as     \begin{align}\label{eq:linearConvergenceValue-regularization}
        c^\top \mu^\star - c^\top \mu_t^\star \le \Delta R_H \exp\left( -\Delta(t-t_0) +  2 t_0 \Delta \log\left(\frac{t+t_0}{2t_0}\right) \right), 
    \end{align}
    where $R_H\coloneqq \max_{\mu\in P} H(\mu) - \min_{\mu\in P} H(\mu) \le \log \lvert \X \rvert$ denotes the entropic radius of $P$ and 
    $\Delta>0$ and $t_0\ge0$ are defined in~\eqref{eq:Delta} and~\eqref{eq:definitiont_0}, respectively.
\end{restatable}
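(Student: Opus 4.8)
The plan is to realize the entropy-regularized optimizer $\mu_t^\star$ as the Fisher--Rao gradient flow of \Cref{thm:convergenceFRGF} started at the \emph{maximum-entropy point} of $P$, and then to read off the two bounds from that theorem after controlling $\KL(\mu^\star,\mu_0)$.

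First I would set $\mu_0 \coloneqq \argmax_{\mu\in P} H(\mu)$, which is well-defined and unique since $H$ is strictly concave on $\Delta_\X$ and $P$ is a polytope. A short argument shows $\mu_0\in\operatorname{int}(P)$: if some coordinate of $\mu_0$ vanished, moving along the segment from $\mu_0$ towards any point of $P\cap\R^\X_{>0}$ would strictly increase $H$ (the contribution of a vanishing coordinate behaves like $-s\log s$, whose derivative blows up at $s=0^+$), contradicting maximality. Moreover, since $P\cap\R^\X_{>0}\neq\emptyset$, every relative-interior point of $P$ has full support: for $x\in\operatorname{int}(P)$ and $\mu'\in P\cap\R^\X_{>0}$ pick $\varepsilon>0$ with $y\coloneqq x+\varepsilon(x-\mu')\in P$, so $x=\tfrac1{1+\varepsilon}y+\tfrac\varepsilon{1+\varepsilon}\mu'$ is a positive convex combination of $y\in\R^\X_{\ge0}$ and $\mu'\in\R^\X_{>0}$, hence $x\in\R^\X_{>0}$. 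Thus $\mu_0$ is an admissible initial condition in the sense of \Cref{setting:LP-inside-Simplex}, and $\mu^\star\ll\mu_0$ automatically.

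The key observation is that on $P$ the divergence $\KL(\cdot,\mu_0)$ agrees with $-H$ up to an additive constant. Since $P\subseteq\Delta_\X$, the reference terms in the definition of $\KL$ cancel and $\KL(\mu,\mu_0)=-H(\mu)-\langle\mu,\log\mu_0\rangle$. First-order optimality of $\mu_0$ over $P$ reads $\langle\nabla\phi(\mu_0),v\rangle=\langle\log\mu_0+\mathds{1}_\X,v\rangle=0$ for all $v\in TP$, and because $TP\subseteq\{v:\langle\mathds{1}_\X,v\rangle=0\}$ this gives $\langle\log\mu_0,v\rangle=0$ on $TP$. Writing $\mu=\mu_0+(\mu-\mu_0)$ with $\mu-\mu_0\in TP$ yields $\langle\mu,\log\mu_0\rangle=\langle\mu_0,\log\mu_0\rangle=-H(\mu_0)$, so $\KL(\mu,\mu_0)=H(\mu_0)-H(\mu)$ for every $\mu\in P$. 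Consequently $\argmax_{\mu\in P}\{c^\top\mu-t^{-1}\KL(\mu,\mu_0)\}=\argmax_{\mu\in P}\{c^\top\mu+t^{-1}H(\mu)\}$, and by the central path property this maximizer is exactly $\mu_t$, i.e.\ $\mu_t^\star=\mu_t$.

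Finally I would invoke \Cref{thm:convergenceFRGF} for this $\mu_0$: \eqref{eq:linearConvergenceKL} and \eqref{eq:linearConvergenceValue} give the stated estimates with $\KL(\mu^\star,\mu_0)$ on the right-hand side, and the same identity as above evaluates $\KL(\mu^\star,\mu_0)=H(\mu_0)-H(\mu^\star)=\max_{\mu\in P}H(\mu)-H(\mu^\star)\le\max_{\mu\in P}H(\mu)-\min_{\mu\in P}H(\mu)=R_H$, with $R_H\le\log\lvert\X\rvert$ because $0\le H\le\log\lvert\X\rvert$ on $\Delta_\X$. Since the right-hand sides of \eqref{eq:linearConvergenceKL}--\eqref{eq:linearConvergenceValue} are monotone in $\KL(\mu^\star,\mu_0)$, replacing it by $R_H$ yields \eqref{eq:linearConvergenceKL-regularization}--\eqref{eq:linearConvergenceValue-regularization}. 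The only genuinely delicate step is verifying $\mu_0\in\operatorname{int}(P)$ so that \Cref{setting:LP-inside-Simplex} and hence \Cref{thm:convergenceFRGF} truly apply; the rest is bookkeeping.
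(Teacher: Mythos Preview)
Your proof is correct and follows essentially the same strategy as the paper: choose $\mu_0$ to be the maximum-entropy point of $P$, show that $\KL(\mu,\mu_0)=H(\mu_0)-H(\mu)$ on $P$ so that the central path coincides with the entropy-regularized solutions, and then apply \Cref{thm:convergenceFRGF} together with $\KL(\mu^\star,\mu_0)\le R_H$. The only cosmetic difference is that the paper derives the identity $\KL(\mu,\mu_0)=H(\mu_0)-H(\mu)$ via the Pythagorean theorem for the information projection of $\mu_{\operatorname{Unif}}$ onto $P$, whereas you obtain it directly from the first-order optimality condition $\langle\log\mu_0,v\rangle=0$ on $TP$; your additional care in verifying $\mu_0\in\operatorname{int}(P)$ fills a detail the paper leaves implicit.
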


Similar to the convergence result, here too one can remove the uniqueness assumption, see \Cref{rem:regularizationError-withoutUniqueness}. 

\begin{remark}[Comparison with existing results]\label{rem:comparison}
In~\cite{cominetti1994asymptotic} it was shown that the regularization error for entropy-regularized linear programs decays exponentially fast, without quantifying the convergence rate. 
The convergence rate of the error, as well as that of Fisher-Rao gradient flows, was subsequently studied in~\cite{weed2018explicit, suarez2023perspectives}, 
establishing a rate $O(e^{-\delta t})$ with 
\begin{align}
    \delta \coloneqq \frac{\min \left\{ c^\top \mu^\star - 
        c^\top \mu : \mu\in \operatorname{Vert}(P)\setminus\{\mu^\star\}  \right\}}{\max \left\{\lVert \mu \rVert_{1} : \mu \in P \right\}} . 
\end{align}
For polytopes $P\subseteq\Delta_\X$ that we consider here, we have 
\begin{align*}
    \delta & = \min \left\{ c^\top \mu^\star - c^\top \mu : \mu\in \operatorname{Vert}(P)  \right\} 
        = \min \left\{ c^\top \mu^\star - c^\top \mu : \mu\in N(\mu^\star)  \right\} \le \Delta,
\end{align*}
showing that \Cref{thm:convergenceFRGF} offers an improvement of these previous results. 

For the special case $P=\Delta_\X$, for which a matching lower bound was constructed in~\cite{weed2018explicit}, the two constants agree.
More generally, it is easily checked that $\delta=\Delta$ if and only if there is a neighboring vertex $\mu\in N(\mu^\star)$ which has minimal optimality gap $c^\top \mu^\star-c^\top\mu$ and has disjoint support from $\mu^\star$. 
To see this, note that for two probability vectors  $\mu_1, \mu_2\in\Delta_\mathbb X$ we have $\lVert \mu_1 - \mu_2 \rVert_{\textup{TV}} = \frac12 \lVert \mu_1 - \mu_2 \rVert_1 \le 1$ with $\lVert \mu_1 - \mu_2 \rVert = 1$ if and only if $\mu_1$ and $\mu_2$ have disjoint support, meaning 
    \begin{align*}
        \{ x\in\mathbb X : \mu_1(x) > 0 \} \cap \{ x\in\mathbb X : \mu_2(x) > 0 \} = \emptyset. 
    \end{align*}
Hence, for $\mu\in N(\mu^\star)$ without disjoint support from $\mu^\star$ we have $\lVert \mu^\star  - \mu \rVert_{\textup{TV}}<1$. 
This implies that $\delta=\Delta$ if and only if there is a neighboring vertex $\mu\in N(\mu^\star)$ which has minimal optimality gap $c^\top \mu^\star-c^\top\mu$ and has disjoint support from $\mu^\star$.   

The constant $\Delta$ depends on the slope of $c$ along the outgoing edges and thus the local geometry of the feasible region around $\mu^\star$, where $\delta$ is simply based on the suboptimality at the neighboring vertices. Because of this, the difference between $\delta$ and $\Delta$ can be arbitrarily big as we show in \Cref{ex:improvement}.
Further, for Markov decision processes the feasible region of the (dual) linear program is a strict subset $\cD\subsetneq \Delta_{\bS\times\A}$ and under the standard exploratory \Cref{s_asu:exploration} and more than one state we have $\delta<\Delta$, see~\Cref{rem:strict-improvements}.
In \Cref{subsec:computational-example} we provide an explicit example of a Markov decision process where  $\delta < \Delta$.  

Further, for gradient flows with respect to a Riemannian metric of the form $g_\mu^\sigma(v,w) \coloneqq \sum_{x\in\X}\frac{v_x w_x}{\mu_x^\sigma}$ one can show $O(t^{-\frac{1}{\sigma-1}})$ convergence for $\sigma \in(1, 2)$, see~\cite{muller2023geometry}. 
Note that this can be extended to the case $\sigma=2$, corresponding to logarithmic barriers for which the central path converges at a $O(t^{-1})$ rate~\cite[Section 11.2]{boyd2004convex}. 
\end{remark}

\begin{example}[Arbitrarily large improvement]\label{ex:improvement}
We consider $\mathbb X = \{1,2,3,4\}$ and $\mathcal L = \{\mu\in\mathbb R^\mathbb X : \mu(1) = \alpha\}$ for $\alpha\in(0,1)$. 
Then, the vertices of $P = \Delta_\mathbb X\cap\mathcal L$ are given by $(1-\alpha)\delta_2, (1-\alpha)\delta_3$ and $(1-\alpha)\delta_4$, where $\delta_i$ denotes the Dirac at $i$. 
When choosing the cost $c=\delta_2$ we have $\delta = 1-\alpha$ but $\Delta = 1$. 
For $\alpha\nearrow1$ the rate $\delta$ deteriorates towards $0$, whereas $\Delta$ remains constant. 
The reason for this is that $\Delta$ depends on the slope of $c$ relative to the outgoing edges, whereas $\delta$  depends on the suboptimality of the neighboring vertices. 
Hence, $\delta$ can be smaller than $\Delta$ by an arbitrarily large factor. 
\end{example}

\begin{remark}[Tightness]
For $P=\Delta_\X$ we have $\mu_t(x) \sim e^{-tc_x}$ as can be seen from the first order stationarity conditions; hence, in this case, the bound is tight. 
For general $P$, in \Cref{subsec:computational-example} we provide empirical evidence that our bound on the exponent is sometimes but not always tight depending on the specific $c$. 
\end{remark}

\subsection{Convergence of Fisher-Rao Gradient Flows}\label{subsec:linearConvergence}
At the heart of the proof lies the following result, which can easily be extended to general Hessian geometries.
For this, one can follow the reasoning in~\cite[Proposition 4.9]{alvarez2004hessian}, which treats general Hessian geometries, but does not allow for time-dependent constants $\kappa_t$ and assumes the lower bound~\eqref{eq:strongConvexity} in a neighborhood of $\mu^\star$ and not only along the trajectory. 

\begin{lemma}\label{prop:ratesTrajectories} 
    Consider Setting~\ref{setting:generalLP} and assume that there is an optimizer $\mu^\star\in P$ and $\kappa_t>0$ for $t>t_0\ge0$ such that
\begin{equation}\label{eq:strongConvexity}
    c^\top \mu^\star - c^\top \mu_t \ge \kappa_t \KL (\mu^\star, \mu_t) \quad \text{for all } t> t_0. 
\end{equation}
Then we have 
\begin{equation}\label{eq:lem:KLConvergence}
    \KL (\mu^\star, \mu_t) \le \KL (\mu^\star, \mu_{0}) \exp\left( -\int_{t_0}^t \kappa_s \D s \right) \quad \text{for all } t\ge t_0,
\end{equation}
as well as 
\begin{equation}\label{eq:lem:convergenceValue}
    c^\top \mu^\star - c^\top \mu_t \le \kappa_t \KL (\mu^\star, \mu_{0}) \exp\left( -\int_{t_0}^t \kappa_s \D s \right) \quad \text{for all } t\ge t_0.
\end{equation}
\end{lemma}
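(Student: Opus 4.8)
The plan is to reduce both estimates to a scalar Grönwall‑type differential inequality for the map $t \mapsto \KL(\mu^\star, \mu_t)$. The pivotal observation, which mirrors the computation already carried out in the proof of the central path property, is that the dissipation of the KL‑divergence along the Fisher--Rao flow is \emph{exactly} the suboptimality gap. Concretely, writing $D(t) \coloneqq \KL(\mu^\star, \mu_t) = \phi(\mu^\star) - \phi(\mu_t) - \langle \nabla\phi(\mu_t), \mu^\star - \mu_t\rangle$ and differentiating in $t$, the two occurrences of $\langle \nabla\phi(\mu_t), \partial_t\mu_t\rangle$ cancel and one is left with $\partial_t D(t) = -\langle \nabla^2\phi(\mu_t)\,\partial_t\mu_t,\ \mu^\star - \mu_t\rangle$. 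Since $\mu^\star$ and $\mu_t$ both lie in $P$, their difference $\mu^\star - \mu_t$ lies in the tangent space $TP$ and is therefore an admissible test vector in the explicit form~\eqref{eq:explicitFRGF} of the flow equation, so $\langle \nabla^2\phi(\mu_t)\,\partial_t\mu_t,\ \mu^\star - \mu_t\rangle = \langle c, \mu^\star - \mu_t\rangle$, and hence
\[
  \partial_t D(t) \;=\; -\bigl(c^\top\mu^\star - c^\top\mu_t\bigr) \;\le\; 0,
\]
the inequality holding because $\mu^\star$ maximizes $c^\top\cdot$ on $P$. In particular $D$ is non‑increasing on $[0,\infty)$, so $D(t_0) \le D(0) = \KL(\mu^\star,\mu_0)$.

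With this identity in hand the first estimate is immediate: for $t > t_0$ the hypothesis~\eqref{eq:strongConvexity} turns the identity into $\partial_t D(t) \le -\kappa_t D(t)$, equivalently $\frac{d}{dt}\!\bigl(D(t)\,e^{\int_{t_0}^t \kappa_s\,\D s}\bigr) \le 0$ on $(t_0,\infty)$; hence this quantity is dominated by its value $D(t_0) \le \KL(\mu^\star,\mu_0)$ at $t_0$, which is precisely~\eqref{eq:lem:KLConvergence}. For the value estimate~\eqref{eq:lem:convergenceValue} one feeds the bound~\eqref{eq:lem:KLConvergence} back together with~\eqref{eq:strongConvexity}, using in addition that the suboptimality gap is itself non‑increasing along the flow — which follows from $\partial_t(c^\top\mu^\star - c^\top\mu_t) = -c^\top\partial_t\mu_t = -\lVert\partial_t\mu_t\rVert_{g_{\mu_t}^{\FR}}^2 \le 0$ — so that the exponential decay of $D(t)$ is transferred to the gap with the multiplicative factor $\kappa_t$.

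The main (in fact essentially the only) obstacle is establishing the energy identity cleanly: it rests on the Hessian/Bregman structure of the KL‑divergence, which is what makes the first‑order terms in $\nabla\phi$ cancel upon differentiation, and on the fact that $\mu^\star - \mu_t \in TP$, which is what makes it a legitimate test direction in~\eqref{eq:explicitFRGF}. Everything after that is routine ODE comparison; the only point requiring a little care is that~\eqref{eq:strongConvexity} is assumed only along the trajectory and only for $t > t_0$, which is why the comparison is anchored at $t_0$ and the monotonicity of $D$ on $[0, t_0]$ is invoked to replace $D(t_0)$ by $\KL(\mu^\star, \mu_0)$.
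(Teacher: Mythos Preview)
Your derivation of the energy identity $\partial_t \KL(\mu^\star,\mu_t) = -(c^\top\mu^\star - c^\top\mu_t)$ and the ensuing Gr\"onwall argument for~\eqref{eq:lem:KLConvergence} are correct and coincide with the paper's proof; the paper simply records the identity separately as Lemma~3.5 and then invokes Gr\"onwall exactly as you do, including the use of the monotonicity $D(t_0)\le D(0)$.

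For~\eqref{eq:lem:convergenceValue}, however, your sketch does not go through. You propose to ``feed~\eqref{eq:lem:KLConvergence} back together with~\eqref{eq:strongConvexity}'', but~\eqref{eq:strongConvexity} is a \emph{lower} bound on the gap, $c^\top\mu^\star - c^\top\mu_t \ge \kappa_t\,\KL(\mu^\star,\mu_t)$, so combining it with the upper bound~\eqref{eq:lem:KLConvergence} on the KL term produces no upper bound on the gap. The monotonicity of the gap that you (correctly) establish does not by itself repair this: knowing that $G(t)\coloneqq c^\top\mu^\star - c^\top\mu_t$ is non-increasing and that $G(t)\ge\kappa_t D(t)$ still leaves the upper side of $G$ uncontrolled. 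The paper takes a different route here: it invokes the sublinear estimate (Corollary~2.3) with the flow re-initialised at time $t-h$, which gives $G(t)\le h^{-1}\KL(\mu^\star,\mu_{t-h})$ for every $h>0$; this is then combined with the already-proved exponential bound on $\KL(\mu^\star,\mu_{t-h})$. (Equivalently, one integrates the identity $D' = -G$ and uses your monotonicity of $G$ to get $hG(t)\le D(t-h)-D(t)\le D(t-h)$.) So the missing ingredient in your argument is precisely this sublinear/integral step that converts control of $D$ into control of $G$; the hypothesis~\eqref{eq:strongConvexity} plays no role in bounding the gap from above.
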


For the proof of this result, we require the following identity. 
\begin{lemma}[\cite{alvarez2004hessian}]\label{lem:evolutionKL}
Consider Setting~\ref{setting:generalLP}, whereby we allow $f\colon\R_{>0}^\X\to\R$ to be an arbitrary differentiable function, and fix $\mu\in P$. 
Then for any $t\ge0$, it holds that 
    \begin{equation}\label{eq:derivativeu}
        \partial_t \KL (\mu, \mu_t) = \langle \nabla f(\mu_t), \mu_t-\mu\rangle. 
    \end{equation}
\end{lemma}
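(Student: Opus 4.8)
The plan is to differentiate the explicit formula for the Bregman divergence directly. Writing out the definition, $\KL(\mu,\mu_t) = \phi(\mu) - \phi(\mu_t) - \langle \nabla\phi(\mu_t), \mu-\mu_t\rangle$, and keeping in mind that $\mu\in P$ is held fixed while $t\mapsto\mu_t$ is a smooth curve contained in $\operatorname{int}(P)\subseteq\R^\X_{>0}$ — the region on which $\phi$ is $C^\infty$ — I would differentiate term by term. The term $\phi(\mu)$ contributes nothing; the chain rule gives $\partial_t\phi(\mu_t) = \langle \nabla\phi(\mu_t), \partial_t\mu_t\rangle$; and for the last term the product rule together with $\partial_t\nabla\phi(\mu_t) = \nabla^2\phi(\mu_t)\partial_t\mu_t$ gives $\langle \nabla^2\phi(\mu_t)\partial_t\mu_t, \mu-\mu_t\rangle - \langle \nabla\phi(\mu_t), \partial_t\mu_t\rangle$. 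Collecting everything, the two copies of $\langle \nabla\phi(\mu_t),\partial_t\mu_t\rangle$ cancel and one is left with $\partial_t\KL(\mu,\mu_t) = -\langle \nabla^2\phi(\mu_t)\partial_t\mu_t,\mu-\mu_t\rangle$.

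To conclude, I would invoke the characterization~\eqref{eq:explicitFRGF} of the Fisher-Rao gradient flow, namely $\langle \nabla^2\phi(\mu_t)\partial_t\mu_t, v\rangle = \langle \nabla f(\mu_t), v\rangle$ for all $v\in TP$, applied to the direction $v = \mu - \mu_t$. This needs $\mu-\mu_t\in TP$, which holds because $P\cap\R^\X_{>0}\neq\emptyset$ forces $P$ to contain a relatively open subset of the affine space $\mathcal L$, so that $\operatorname{aff\,span}(P)=\mathcal L$ and $TP$ is precisely the linear part of $\mathcal L$, which contains the difference of the two points $\mu,\mu_t\in\mathcal L$. Substituting gives $\partial_t\KL(\mu,\mu_t) = -\langle \nabla f(\mu_t), \mu-\mu_t\rangle = \langle \nabla f(\mu_t), \mu_t-\mu\rangle$, which is the claim.

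I do not expect a genuine obstacle here; the computation is a one-line application of the chain and product rules. The only points that warrant a word of justification are the smoothness used for the term-by-term differentiation — available because the flow stays in $\operatorname{int}(P)$, where $\phi$ is smooth — and the membership $\mu-\mu_t\in TP$ needed to use the flow equation. It is worth noting that the argument uses only differentiability of $f$ on $\R^\X_{>0}$ and never its linearity, which is exactly why the lemma is stated for an arbitrary differentiable potential $f$.
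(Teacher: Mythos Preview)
Your proposal is correct and follows essentially the same route as the paper: differentiate the Bregman formula $\KL(\mu,\mu_t)=\phi(\mu)-\phi(\mu_t)-\langle\nabla\phi(\mu_t),\mu-\mu_t\rangle$ to obtain $\langle\nabla^2\phi(\mu_t)\partial_t\mu_t,\mu_t-\mu\rangle$, then apply~\eqref{eq:explicitFRGF} with $v=\mu-\mu_t\in TP$. Your write-up is in fact more detailed than the paper's, spelling out the cancellation and the justification that $\mu-\mu_t\in TP$.
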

\begin{proof}
    Denoting the negative Shannon entropy by $\phi$, we compute 
    \begin{align*}
        \partial_t \KL (\mu, \mu_t) = - \partial_t \phi(\mu_t) - \partial_t \langle \nabla \phi(\mu_t), \mu - \mu_t \rangle = \langle \nabla^2 \phi(\mu_t)\partial_t \mu_t, \mu_t - \mu \rangle.
    \end{align*}
    Now~\eqref{eq:explicitFRGF} yields the claim. 
\end{proof}
\begin{proof}[Proof of \Cref{prop:ratesTrajectories}]
Using~\eqref{eq:derivativeu} and~\eqref{eq:strongConvexity} we find that for all $t\ge T$ it holds that 
$\partial_t \KL (\mu^\star, \mu_t) = c^\top \mu_t  - c^\top \mu^\star \le - \kappa_t \KL (\mu^\star, \mu_t)$. 
Now Gronwall's inequality yields~\eqref{eq:lem:KLConvergence}. 
By \Cref{prop:sublinearRate} we have for any $h>0$ that 
    \[ c^\top \mu^\star - c^\top \mu_t \le \frac{\KL (\mu^\star, \mu_{t-h})}{h} \le  \KL (\mu^\star, \mu_{0}) \cdot \frac{\exp\left(-\int_{t_0}^{t-h} \kappa_s \D s \right)}{h}. \] 
Taking the limit $h\to0$ yields~\eqref{eq:lem:convergenceValue}. 
\end{proof} 

The lower bound \eqref{eq:strongConvexity} can be interpreted as a form of strong convexity under which the objective value controls the Bregman divergence, see also~\cite{lu2018relatively, bauschke2019linear} for a discussion of gradient domination and strong convexity conditions in Bregman divergence.
To show that such a lower bound holds in the case of the linear program~\eqref{eq:LP-inside-simplex}, we first lower bound the sub-optimality gap $c^\top \mu^\star - c^\top \mu_t$ in terms of an arbitrary norm, where we will later use the total variation distance. 

\begin{restatable}{lemma}{TVLPEstimateWU}
\label{lem:TV-LP-estimate-withoutUniqueness}
Consider a polytope $P \subseteq \R^{\X}$ and denote by $F^\star$ the face of maximizers of the linear function $\mu\mapsto c^\top \mu$ over $P$. 
Denote the set of neighboring vertices of a vertex $\mu$ by $N(\mu)$ and let $\lVert \cdot\rVert\colon\R^{\X}\to\R_{\ge0}$ be an arbitrary semi-norm. 
Then either $F^\star = P$ or with $\frac{c}{0}\coloneqq+\infty$ for $c>0$, we have 
\begin{equation}
    \Delta\coloneqq \min \left\{ \frac{c^\top\mu^\star - c^\top\mu }{\lVert \mu^\star - \mu \rVert} : \mu^\star \in\operatorname{vert}(F^\star), \mu\in N(\mu^\star)\setminus F^\star \right\} > 0,
\end{equation}
and further 
\begin{equation}\label{eq:gapVSNorm-2}
     c^\top \mu^\star - c^\top \mu \ge \Delta \cdot \inf_{\mu^\star\in F^\star}  \lVert \mu^\star - \mu\rVert \quad \text{for all } \mu\in P.
\end{equation}
\end{restatable}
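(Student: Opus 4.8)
The plan is to reduce the global estimate \eqref{eq:gapVSNorm-2} to a statement about edges emanating from the optimal face, using the basic fact that any point of a polytope is a convex combination of vertices and that the simplex walk along edges from a vertex into the polytope can be arranged to be monotone in the objective. First I would dispose of the trivial case: if $F^\star = P$ then there is nothing to prove, so assume $F^\star \subsetneq P$. In that case I claim the minimum defining $\Delta$ is over a nonempty finite set of strictly positive numbers, hence $\Delta > 0$: nonemptiness follows because some vertex $\mu^\star$ of $F^\star$ must have a neighbor $\mu \notin F^\star$ (otherwise the subgraph of the vertex-edge graph of $P$ spanned by $\operatorname{vert}(F^\star)$ would be a connected component, forcing $F^\star = P$ by connectedness of the polytope graph); finiteness is clear; and each ratio is strictly positive because $\mu \notin F^\star$ means $c^\top \mu < c^\top \mu^\star$ while $\lVert \mu^\star - \mu\rVert$ is finite (if $\lVert \mu^\star - \mu\rVert = 0$ the ratio is $+\infty$ by the convention, still positive).

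The core of the argument is the inequality \eqref{eq:gapVSNorm-2}. Fix $\mu \in P$. The strategy is to run a simplex-type path from $\mu$ to the optimal face $F^\star$ that only increases $c^\top(\cdot)$ and whose total displacement (in the semi-norm, via the triangle inequality) controls $\operatorname{dist}(\mu, F^\star)$. Concretely, I would argue that there is a finite sequence $\mu = \nu_0, \nu_1, \dots, \nu_m$ with $\nu_m \in F^\star$, where each step is of the form $\nu_{k+1} = \nu_k + s_k(\mu^\star_k - \mu'_k)$ with $\mu^\star_k$ a vertex of $F^\star$ and $\mu'_k$ a neighbor, chosen so that $c^\top \nu_{k+1} \ge c^\top \nu_k$ and $\sum_k \lVert \nu_{k+1} - \nu_k\rVert \ge \lVert \mu - \nu_m \rVert \ge \inf_{\mu^\star \in F^\star}\lVert \mu^\star - \mu\rVert$ (by the triangle inequality, since the $\nu_k$ trace a path from $\mu$ to a point of $F^\star$). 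Along each edge step, the slope estimate $c^\top(\mu^\star_k - \mu'_k) \ge \Delta \lVert \mu^\star_k - \mu'_k\rVert$ holds by definition of $\Delta$, so $c^\top \nu_{k+1} - c^\top \nu_k \ge \Delta \lVert \nu_{k+1} - \nu_k\rVert$. Summing over $k$ and telescoping gives $c^\top \nu_m - c^\top \mu \ge \Delta \sum_k \lVert \nu_{k+1}-\nu_k\rVert \ge \Delta \inf_{\mu^\star \in F^\star}\lVert \mu^\star - \mu\rVert$, and since $\nu_m \in F^\star$ we have $c^\top \nu_m = c^\top \mu^\star$, which is \eqref{eq:gapVSNorm-2}. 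A cleaner way to obtain such a monotone edge path is to pass to the normal-cone / face-lattice picture: write $\mu$ in the minimal face $G$ of $P$ containing it, and induct on $\dim G$ — if $G \subseteq F^\star$ we are done; otherwise pick a vertex $\mu^\star$ of $F^\star$, move from $\mu$ toward $\mu^\star$ until hitting $\partial G$ at a point $\tilde\mu$ lying in a proper face, with $c^\top \tilde\mu \ge c^\top \mu$ by linearity (choosing direction toward the $F^\star$-side), reducing dimension; the first edge traversed realizes a ratio $\ge \Delta$ by convexity of the optimality gap along edges.

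The main obstacle I anticipate is making the "monotone edge path" rigorous while keeping the displacement bound: one must ensure both that the objective never decreases along the path and that the accumulated semi-norm length dominates the straight-line distance to $F^\star$. The triangle inequality handles the latter automatically for any path, so the real work is the former — guaranteeing each intermediate step can be taken along an edge between a vertex of $F^\star$ and one of its neighbors, with nonnegative step length, without overshooting. I would handle this by a dimension induction on the minimal face containing the current point (as sketched above), where at each stage the direction of motion is chosen as a difference of a vertex of $F^\star$ and an adjacent vertex that points "toward" $F^\star$ within the current face; linearity of $c$ ensures monotonicity, finiteness of the face lattice ensures termination, and the per-edge slope bound is exactly the defining property of $\Delta$. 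An alternative, perhaps slicker, route is to invoke a quantitative Hoffman-type bound for the system $\{c^\top \mu = c^\top\mu^\star,\ \mu \in P\}$, but deriving the explicit constant $\Delta$ from edge slopes still seems to require the combinatorial path argument, so I would present the inductive version.
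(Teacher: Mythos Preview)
Your overall strategy is the same one the paper uses: show that any $\mu\in P$ can be written as $\mu^\star+\sum_e \alpha_e e$ for some $\mu^\star\in F^\star$, where each $e$ is an edge direction $\mu'-\mu^{\star\prime}$ with $\mu^{\star\prime}\in\operatorname{vert}(F^\star)$ and $\mu'\in N(\mu^{\star\prime})\setminus F^\star$ and $\alpha_e\ge 0$; then the triangle inequality and the defining inequality $-c^\top e\ge \Delta\lVert e\rVert$ give \eqref{eq:gapVSNorm-2} immediately. The issue is with how you obtain this decomposition.

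Your ``monotone edge path'' needs every step to be along a direction $\mu^\star_k-\mu'_k$ with $\mu^\star_k\in\operatorname{vert}(F^\star)$ and $\mu'_k\in N(\mu^\star_k)\setminus F^\star$, since $\Delta$ only bounds the slope of $c$ along \emph{those} edges. But your dimension induction moves from $\mu$ toward a vertex $\mu^\star$ of $F^\star$, and the direction $\mu^\star-\mu$ is in general not such an edge direction, so the per-step slope bound $\ge\Delta$ does not apply to that move. The alternative of walking along edges of $P$ in a simplex fashion fails for the same reason: a monotone edge path from a vertex $v\notin F^\star$ to $F^\star$ may traverse edges between two non-optimal vertices, and the slope of $c$ along those edges is \emph{not} controlled by $\Delta$ (it can be arbitrarily small). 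So as written the path construction does not deliver the inequality you telescoped.

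The paper replaces the path argument by a one-shot structural fact about polytopes: for any face $F$ of $P$, one has $P\subseteq F+C$, where $C=\operatorname{cone}\{\nu-\mu:\mu\in\operatorname{vert}(F),\ \nu\in N(\mu)\setminus F\}$ is the cone generated by the edges emanating from $F$. Applying this with $F=F^\star$ gives $\mu=\mu^\star+\sum_{e\in E}\alpha_e e$ with $\alpha_e\ge 0$ directly, and then
\[
\Delta\,\lVert \mu^\star-\mu\rVert \;\le\; \Delta\sum_{e\in E}\alpha_e\lVert e\rVert \;\le\; -\sum_{e\in E}\alpha_e\, c^\top e \;=\; c^\top\mu^\star - c^\top\mu,
\]
which is exactly \eqref{eq:gapVSNorm-2} for this $\mu^\star$. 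The containment $P\subseteq F+C$ generalizes the standard vertex-cone description of a polytope (the case where $F$ is a vertex) and is proved by slicing with a hyperplane separating $\operatorname{vert}(F)$ from the remaining vertices and using the face figure $P/F$. This is the missing ingredient in your argument; once you have it, no path or induction is needed.
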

\begin{proof}
If $F^\star \ne P$, then $c^\top \mu^\star-c^\top \mu >0$ for some vertex $\mu$, which implies $\Delta>0$. 
To simplify notation we denote the set $E\coloneqq\{\mu-\mu^\star: \mu\in N(\mu^\star) \setminus F^\star, \mu^\star\in \operatorname{vert}(F^\star)\}$ of edges such that exactly one of the two endpoints is contained in $F^\star$. 
Then, the polytope $P$ is contained in 
    \[ F^\star+C = \left\{ \mu^\star + \sum_{e\in E} \alpha_e e : \mu^\star\in F^\star, \alpha_e\ge0 \text{ for all } e\in E \right\}, \]
see \Cref{app:lem:cone} and hence we can write $\mu\in P$ as $\mu = \mu^\star + \sum_{e} \alpha_e e$ for some $\mu^\star\in F^\star$. 
Using the triangle inequality we obtain 
    \[ \Delta \lVert \mu^\star - \mu \rVert \le \Delta \sum_{e\in E}\alpha_e \lVert e \rVert \le - \sum_{e\in E} \alpha_e c^\top e = c^\top \mu^\star - c^\top\mu. \]
\end{proof}

\begin{lemma}\label{lem:KLEstimateL1}
Consider a finite set $\X$ and a probability distribution $\mu\in \Delta_\X$. Let $c>1$ and set $\delta\coloneqq \frac{c-1}{c+1} 
    \cdot \min\{ \mu_x : \mu_x>0 \}>0$. 
Then for all $\nu\in\Delta_\X$ satisfying $\lVert \mu-\nu\rVert_\infty\le\delta$ it holds that 
\begin{equation}\label{eq:KLvsL1}
    \KL (\mu, \nu) \le c \cdot \lVert \mu - \nu \rVert_{\TV }. 
\end{equation}
\end{lemma}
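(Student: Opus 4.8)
The plan is to reduce the estimate to the elementary inequality $\log u\le u-1$ combined with a uniform bound on the likelihood ratio $\mu_x/\nu_x$ over the support of $\mu$ --- which is precisely what the hypothesis $\lVert\mu-\nu\rVert_\infty\le\delta$ provides.

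First I would fix notation: let $S\coloneqq\{x\in\X:\mu_x>0\}$ and $m\coloneqq\min\{\mu_x:\mu_x>0\}>0$, so that $\delta=\frac{c-1}{c+1}m$ and, since $c>1$, one has $0<\delta<m$. For $x\in S$ the assumption $\lVert\mu-\nu\rVert_\infty\le\delta$ gives $\nu_x\ge\mu_x-\delta\ge m-\delta>0$; in particular $\mu\ll\nu$, so $\KL(\mu,\nu)$ is finite, and moreover $\frac{\mu_x}{\nu_x}\le\frac{\mu_x}{\mu_x-\delta}$. As $t\mapsto\frac{t}{t-\delta}$ is decreasing on $(\delta,\infty)$ and $\mu_x\ge m$ for every $x\in S$, this yields the uniform bound $\frac{\mu_x}{\nu_x}\le\frac{m}{m-\delta}=\frac{c+1}{2}$.

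Next I would bound $\KL$ term by term. Since $\mu,\nu\in\Delta_\X$, the linear terms in the definition of $\KL$ cancel and $\KL(\mu,\nu)=\sum_{x\in S}\mu_x\log\frac{\mu_x}{\nu_x}$. Applying $\log u\le u-1$ with $u=\mu_x/\nu_x$ gives $\mu_x\log\frac{\mu_x}{\nu_x}\le\frac{\mu_x}{\nu_x}(\mu_x-\nu_x)$ for each $x\in S$. Summing, discarding the nonpositive summands (those with $\mu_x\le\nu_x$), and inserting the uniform bound $\mu_x/\nu_x\le\frac{c+1}{2}$ on the remaining ones, I obtain
\[
  \KL(\mu,\nu)\le\sum_{x\,:\,\mu_x>\nu_x}\frac{\mu_x}{\nu_x}(\mu_x-\nu_x)\le\frac{c+1}{2}\sum_{x\,:\,\mu_x>\nu_x}(\mu_x-\nu_x)=\frac{c+1}{2}\lVert\mu-\nu\rVert_{\TV},
\]
where the last identity is the standard description of the total variation distance and uses $\{x:\mu_x>\nu_x\}\subseteq S$. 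Since $c>1$ implies $\frac{c+1}{2}\le c$, this is the claim --- in fact with the slightly sharper constant $\frac{c+1}{2}$.

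I do not expect a genuine obstacle. The one point to handle carefully is the monotonicity reduction to an atom of minimal mass, which is what pins the constant at $\frac{c+1}{2}$ and explains the choice of threshold $\delta$ proportional to $m=\min\{\mu_x:\mu_x>0\}$. It is also worth checking the atoms outside $\operatorname{supp}(\mu)$: they contribute $0$ to $\KL$ by the convention $0\log\frac00=0$, yet can have $\nu_x$ as large as $\delta$, so it matters that no such $x$ satisfies $\mu_x>\nu_x$ and hence that they never enter the total-variation sum above.
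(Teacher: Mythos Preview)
Your proof is correct. Both arguments start from the same termwise inequality $\mu_x\log(\mu_x/\nu_x)\le\frac{\mu_x}{\nu_x}(\mu_x-\nu_x)$, but you then proceed more efficiently than the paper. The paper keeps all summands, uses a \emph{two-sided} bound $1-\varepsilon\le\mu_x/\nu_x\le1+\varepsilon$ (with $\varepsilon=\tfrac{c-1}{2}$) to control each case $\mu_x\gtrless\nu_x$ separately, and then still has to estimate the residual linear term $\sum_{x\in S}(\mu_x-\nu_x)$ by the total variation. You instead discard the nonpositive summands outright, which means only the one-sided upper bound $\mu_x/\nu_x\le\frac{m}{m-\delta}=\frac{c+1}{2}$ is needed, and the remaining positive part is exactly $\lVert\mu-\nu\rVert_{\TV}$. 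This is shorter, avoids the case distinction, and delivers the strictly better constant $\frac{c+1}{2}<c$ for every $c>1$.
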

\begin{proof}
We bound the individual summands in the KL-divergence
    \[ \KL (\mu, \nu) = \sum_{x\in\X} \mu_x \log\left( \frac{\mu_x}{\nu_x}\right) = \sum_{x\in X} \mu_x \log\left( \frac{\mu_x}{\nu_x}\right), \]
where $X\coloneqq\{x\in\X : \mu_x>0\}$. 
If $\mu_x,\nu_x>0$ then 
\begin{align}\label{eq:klIndividual}
    \begin{split}
        \mu_x \log \left(\frac{\mu_x}{\nu_x}\right) & = \mu_x \left( \log(\nu_x+(\mu_x-\nu_x)) - \log(\nu_x) \right) \\
        & \le \mu_x \left( \log(\nu_x) + \frac{\mu_x-\nu_x}{\nu_x} - \log(\nu_x) \right) 
         = (\mu_x - \nu_x)\cdot\frac{\mu_x}{\nu_x},
    \end{split}
\end{align}
where we used the convexity $\log(t+h) \le \log(t) + h/t$ for $t>0, t+h>0$. 
We set $\varepsilon\coloneqq\frac{c-1}{2}\in(0,1)$, such that 
\[ \delta = \frac{\varepsilon}{1+\varepsilon}  
    \cdot \min\big\{ \mu_x : \mu_x>0 \big\}. 
    \]
If $\lVert \mu-\nu\rVert_\infty\le\delta$ then 
    \[ \nu_x \ge \mu_x - \delta \ge \mu_x \left(1-\frac{\varepsilon}{1+\varepsilon}\right) = \frac{\mu_x}{1+\varepsilon} \]
as well as
    \[ \nu_x \le \mu_x + \delta  \le \mu_x \left(1 + \frac{\varepsilon}{1+\varepsilon}\right) \le \mu_x \left(1 + \frac{\varepsilon}{1-\varepsilon}\right) = \frac{\mu_x}{1-\varepsilon} \]
and therefore $1-\varepsilon\le \frac{\mu_x}{\nu_x}\le1+\varepsilon$. 
If $\mu_x\ge\nu_x$ then 
    \[ 
    (\mu_x-\nu_x)\cdot \frac{\mu_x}{\nu_x} 
    \le (1+\varepsilon)(\mu_x-\nu_x) 
    = \mu_x-\nu_x + \varepsilon \lvert \mu_x - \nu_x \rvert , 
    \]
and if $\mu_x<\nu_x$ then 
\begin{equation}\label{eq:almostThere}
    (\mu_x-\nu_x)\cdot \frac{\mu_x}{\nu_x} \le (1-\varepsilon)(\mu_x-\nu_x) = \mu_x-\nu_x + \varepsilon \lvert \mu_x - \nu_x \rvert. 
\end{equation}
Together with~\eqref{eq:klIndividual} summing over $x$ yields 
\begin{equation}\label{eq:there}
    \KL (\mu, \nu) \le \sum_{x\in X} (\mu_x-\nu_x) + \varepsilon \sum_{x\in X} \lvert \mu_x - \nu_x \rvert \le \sum_{x\in X} (\mu_x-\nu_x) + 2 \varepsilon \lVert \mu-\nu\rVert_{\TV } . 
\end{equation}
It remains to estimate the first part. 
Setting $X^c\coloneqq\X\setminus X$, we have 
\[ 
\sum_{x\in X} (\mu_x-\nu_x) = \sum_{x\in\X} (\mu_x-\nu_x) -  \sum_{x\in X^c} (\mu_x-\nu_x) = -  \sum_{x\in X^c} (\mu_x-\nu_x) = \sum_{x\in X^c} \lvert  \mu_x-\nu_x \rvert  
\]
since $\mu_x=0$ for $x\in X^c$. 
Now we can estimate
\begin{align}\label{eq:lastStep}
    2\sum_{x\in X} (\mu_x-\nu_x) = \sum_{x\in X} (\mu_x-\nu_x) + \sum_{x\in X^c} \lvert  \mu_x-\nu_x \rvert \le \lVert \mu-\nu\rVert_1 = 2\lVert \mu-\nu\rVert_{\TV }.
\end{align}
Combining~\eqref{eq:there} and~\eqref{eq:lastStep} yields
\[ \KL (\mu, \nu) \le (1+2\varepsilon)\lVert \mu - \nu \rVert_{\TV } = c\cdot\lVert \mu - \nu \rVert_{\TV }. \]
\end{proof}

\begin{corollary}[Local KL-TV estimate]\label{cor:TV-KL}
Consider a finite set $\X$ and a probability distribution $\mu\in \Delta_\X$. 
Then for all $\nu\in\Delta_\X$ satisfying 
\begin{equation}
    \lVert \mu - \nu \rVert_\infty < \min\{ \mu_x : \mu_x>0 \} 
\end{equation}
it holds that 
\begin{equation}\label{eq:KLvsL1-2}
    \KL (\mu, \nu) \le \frac{\min\{ \mu_x : \mu_x>0 \} +  \lVert \mu-\nu\rVert_\infty}{\min\{ \mu_x : \mu_x>0 \} - \lVert \mu-\nu\rVert_\infty} \cdot \lVert \mu - \nu \rVert_{\TV }. 
\end{equation}
\end{corollary}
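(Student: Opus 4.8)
The plan is to obtain this as a direct consequence of \Cref{lem:KLEstimateL1}, by choosing the free constant $c$ there optimally. Abbreviate $m \coloneqq \min\{\mu_x : \mu_x > 0\}$ and $\delta \coloneqq \lVert \mu - \nu \rVert_\infty$, so the hypothesis of the corollary reads $\delta < m$. If $\delta = 0$ then $\mu = \nu$ and both sides of~\eqref{eq:KLvsL1-2} vanish, so I may assume $\delta \in (0, m)$; in that regime $\nu_x \ge \mu_x - \delta \ge m - \delta > 0$ on the support of $\mu$, so $\mu \ll \nu$ and $\KL(\mu,\nu)$ is finite, consistent with applying the lemma.

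Next I would match the two setups by solving for $c$. In \Cref{lem:KLEstimateL1} the admissible perturbation radius is $\frac{c-1}{c+1}\, m$, and I want this to equal $\delta$. Since $t \mapsto \frac{t-1}{t+1}$ is an increasing bijection from $(1, \infty)$ onto $(0, 1)$ and $\delta/m \in (0,1)$, there is a unique admissible $c > 1$, namely $c = \frac{m + \delta}{m - \delta}$, and $0 < \delta < m$ makes $c > 1$ immediate. With this choice the radius in \Cref{lem:KLEstimateL1} equals exactly $\delta = \lVert \mu - \nu \rVert_\infty$, so the hypothesis $\lVert \mu - \nu \rVert_\infty \le \frac{c-1}{c+1} m$ of that lemma is satisfied (with equality).

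Applying \Cref{lem:KLEstimateL1} then yields $\KL(\mu, \nu) \le c \cdot \lVert \mu - \nu \rVert_{\TV} = \frac{m + \lVert \mu - \nu \rVert_\infty}{m - \lVert \mu - \nu \rVert_\infty} \cdot \lVert \mu - \nu \rVert_{\TV}$, which is precisely~\eqref{eq:KLvsL1-2}. There is essentially no obstacle; the only point requiring a line of care is the degenerate case $\mu = \nu$, where $c = 1$ is not admissible in the lemma and which is therefore handled separately and trivially. Optionally I would also remark that the prefactor is increasing in $\lVert \mu - \nu \rVert_\infty$ and diverges as $\lVert \mu - \nu \rVert_\infty \uparrow m$, which mirrors the fact that $\KL(\mu, \cdot)$ can become infinite once a perturbation pushes some coordinate in the support of $\mu$ down to zero.
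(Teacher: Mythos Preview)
Your proof is correct and follows the same approach as the paper, namely deducing the corollary directly from \Cref{lem:KLEstimateL1} by choosing the constant $c$ appropriately. Your execution is in fact cleaner: you solve $\frac{c-1}{c+1}\,m=\lVert\mu-\nu\rVert_\infty$ for $c$ in one step, whereas the paper's argument as written introduces an auxiliary $\varepsilon$ in a way that does not quite line up with the lemma's hypothesis.
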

\begin{proof}
    This is a direct consequence of \Cref{lem:KLEstimateL1}. 
    Indeed, for $\varepsilon>0$ small enough we have $\lVert \mu - \nu \rVert_\infty \le \frac{2-\varepsilon}{2+\varepsilon} \cdot \min\{ \mu_x : \mu_x>0 \}$ and thus by \Cref{lem:KLEstimateL1} with $c=1+\varepsilon$ we have $\KL (\mu, \nu) \le (1+\varepsilon) \lVert \mu - \nu \rVert_{\TV }$. 
    Note that $\varepsilon>0$ was arbitrary. 
\end{proof}

Now we prove our main result on the convergence of Fisher-Rao gradient flows. 
\begin{proof}[Proof of \Cref{thm:convergenceFRGF}]
    Setting $\delta\coloneqq \min\{ \mu_x^\star : \mu_x^\star>0 \}$ and using \Cref{lem:TV-LP-estimate-withoutUniqueness} with $\lVert\cdot\rVert_{\TV}$ and \Cref{cor:TV-KL} we have 
    \begin{equation*}
        c^\top \mu^\star - c^\top \mu_t \ge \Delta \lVert \mu^\star - \mu_t \rVert_{\TV } \ge \Delta\cdot\frac{\delta -  \lVert \mu^\star-\mu_t\rVert_\infty}{\delta + \lVert \mu^\star-\mu_t\rVert_\infty} \cdot \KL (\mu^\star, \mu_t),
    \end{equation*}
    if $\lVert \mu^\star - \mu_t \rVert_\infty < \delta$.
    By \Cref{prop:sublinearRate} we have 
    \begin{align*}
        \lVert \mu^\star - \mu_t \rVert_\infty \le 2 \lVert \mu^\star - \mu_t \rVert_{\TV } \le \frac{2c^\top(\mu^\star-\mu_t)}\Delta \le \frac{2\KL (\mu^\star,\mu_0)}{\Delta \cdot t}.
    \end{align*}
    Hence, for $t>t_0$ we have $\lVert \mu^\star - \mu_t \rVert_\infty < \delta$. 
    In this case, we can estimate 
    \begin{align*}
        \frac{\delta -  \lVert \mu^\star-\mu_t\rVert_\infty}{\delta + \lVert \mu^\star-\mu_t\rVert_\infty} & \le \frac{\delta -  2 \KL (\mu^\star, \mu_0)\Delta^{-1} t^{-1}} {\delta + 2 \KL (\mu^\star, \mu_0)\Delta^{-1} t^{-1}} 
        = \frac{t - t_0}{t + t_0} \eqqcolon \kappa_t. 
    \end{align*}
    Thus for $t>t_0$ we have $c^\top \mu^\star - c^\top \mu_t \ge \Delta \kappa_t \KL (\mu^\star, \mu_t)$, and \Cref{prop:ratesTrajectories} together with 
    \begin{equation*}
        \int_{t_0}^t \frac{s-t_0}{s+t_0} \D s = s - 2 t_0 \log(s+t_0)\Big|^{s=t}_{s=t_0} = (t-t_0) - 2 t_0 \log\left( \frac{t+t_0}{2t_0} \right)
    \end{equation*}
    yield the result. 
\end{proof}

\subsection{Estimating the regularization error}

Using the central path property we can deduce an estimate on the regularization error from the convergence results for the Fisher-Rao gradient flow. 
If the uniform distribution is contained in $P$, $\mu_{\operatorname{Unif}}\in P$, then the claim follows simply by setting $\mu_0\coloneqq \mu_{\operatorname{Unif}}$ as  
\begin{equation}
    \KL (\mu, \mu_{\operatorname{Unif}}) = -H(\mu) + \log \lvert \X \rvert. 
\end{equation}
If the uniform distribution is not contained in $P$, we can choose its information projection as an initial distribution $\mu_0$ to the same effect. 
Indeed, recall that for
\begin{equation}\label{eq:information-projection}
    \mu_0 = \argmin_{\mu\in P} \KL (\mu, \mu_{\operatorname{Unif}})  = \argmax_{\mu\in P} H(\mu) 
\end{equation}
we have by the Pythagorean theorem that 
\begin{equation}
    \KL (\mu, \mu_{\operatorname{Unif}}) = \KL (\mu, \mu_0) + \KL (\mu_0, \mu_{\operatorname{Unif}})
\end{equation}
for all $\mu\in P$, see~\cite[Theorem 2.8]{ay2017information}. 
Now we can estimate the regularization error. 

\begin{proof}[Proof of \Cref{cor:regularizationError}]
    By the central path property the Fisher-Rao gradient flow $(\mu_t)_{t\ge0}$ satisfies $\mu_t = \argmax\left\{ c^\top \mu - t^{-1} \KL (\mu, \mu_0) : \mu\in P \right\}$. 
    If we choose $\mu_0$ as the information projection according to~\eqref{eq:information-projection} the Pythagorean theorem yields 
    \begin{equation*}
        \KL (\mu, \mu_0) = \KL (\mu, \mu_{\operatorname{Unif}}) + H(\mu_0) - \log \lvert \X \rvert = H(\mu_0) - H(\mu). 
    \end{equation*}
    This shows that $\mu_t = \argmax\left\{ c^\top \mu + t^{-1} H(\mu) : \mu\in P \right\}$, i.e., that $\mu_t$ is the solution of the entropy regularized linear program~\eqref{eq:LP:reg}. 
    Now the claim follows from \Cref{thm:convergenceFRGF} and  $\KL (\mu^\star, \mu_0) = H(\mu_0) - H(\mu^\star) \le R_H$. 
\end{proof}

\subsection{Non-unique maximizers}
Both \Cref{thm:convergenceFRGF} and \Cref{cor:regularizationError} are formulated under the assumption that the linear program~\eqref{eq:LP-inside-simplex} admits a unique solution. 
This is satisfied for almost all costs $c\in\R^{\X}$, however, it can be generalized to all costs. 

To proceed like in the proof with a unique maximizer, we need to identify the limit of $\mu_t$ in $F^\star$. 
For linear objective functions the limit $\mu^\star$ is the information projection of $\mu_0$ to $F^\star$, see~\cite[Corollary 4.8]{alvarez2004hessian}. 
We include a proof here for the sake of completeness. 

\begin{corollary}[Implicit bias of Fisher-Rao GF]\label{cor:implicitBias}
    Consider Setting~\ref{setting:LP-inside-Simplex} and denote the face of maximizers of the linear program~\eqref{eq:LP-inside-simplex} by $F^\star$. 
    Then it holds that 
    \begin{equation}
        \lim_{t\to+\infty}\mu_t = \mu^\star = \argmin_{\mu\in F^\star} \KL (\mu, \mu_0). 
    \end{equation}
    In words, the Fisher-Rao gradient flow converges to the information projection of $\mu_0$ to $F^\star$, i.e., it selects
    the optimizer that has the minimum KL-divergence from $\mu_0$. 
\end{corollary}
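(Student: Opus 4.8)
The plan is to combine the sublinear convergence of \Cref{prop:sublinearRate} with the central path property and a compactness argument, rather than rerunning the quantitative estimate behind \Cref{thm:convergenceFRGF}. Write $v^\star \coloneqq \max_{\mu\in P} c^\top\mu$ for the optimal value. By \Cref{prop:sublinearRate} we have $c^\top\mu_t \to v^\star$ as $t\to+\infty$. Since $P$ is compact the trajectory $(\mu_t)_{t\ge0}$ has at least one limit point, and since $\mu\mapsto c^\top\mu$ is continuous, every limit point $\bar\mu$ along some $t_n\to+\infty$ satisfies $c^\top\bar\mu = v^\star$, i.e.\ $\bar\mu\in F^\star$.

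Next I would check that the information projection $\nu^\star \coloneqq \argmin_{\mu\in F^\star}\KL(\mu,\mu_0)$ is well defined. The face $F^\star$ is a nonempty compact polytope, and because $\mu_0\in\R^\X_{>0}$ the map $\mu\mapsto\KL(\mu,\mu_0)$ is finite and continuous on all of $\Delta_\X$; moreover, since $\sum_x\mu_x$ is constant on $P$, it agrees on $P$ up to a linear term with $\mu\mapsto\sum_x\mu_x\log\mu_x$ and is therefore strictly convex. Hence it attains its minimum over $F^\star$ at a unique point $\nu^\star$.

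The key step is to show that $\nu^\star$ dominates the whole trajectory in KL-divergence from $\mu_0$. By the central path property $\mu_t$ maximizes $\mu\mapsto c^\top\mu - t^{-1}\KL(\mu,\mu_0)$ over $P$, so for every $t>0$
\begin{equation*}
    c^\top\mu_t - t^{-1}\KL(\mu_t,\mu_0) \ \ge\ c^\top\nu^\star - t^{-1}\KL(\nu^\star,\mu_0).
\end{equation*}
Since $\nu^\star\in F^\star$ we have $c^\top\nu^\star = v^\star \ge c^\top\mu_t$, so rearranging gives $\KL(\mu_t,\mu_0)\le\KL(\nu^\star,\mu_0)$ for all $t>0$. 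Now let $\bar\mu = \lim_n\mu_{t_n}$ be any limit point; by the first paragraph $\bar\mu\in F^\star$, and by continuity of $\KL(\cdot,\mu_0)$ on $\Delta_\X$ we obtain $\KL(\bar\mu,\mu_0) = \lim_n\KL(\mu_{t_n},\mu_0)\le\KL(\nu^\star,\mu_0)$. Since $\nu^\star$ is the unique minimizer of $\KL(\cdot,\mu_0)$ over $F^\star$, this forces $\bar\mu = \nu^\star$. As every limit point of the bounded trajectory equals $\nu^\star$, we conclude $\lim_{t\to+\infty}\mu_t = \nu^\star = \argmin_{\mu\in F^\star}\KL(\mu,\mu_0)$.

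I do not expect a serious obstacle; the only points that need care are that limit points land in $F^\star$ (which is exactly where value convergence plus compactness is used) and reading the central path inequality off with the right sign. As a sanity check and alternative ingredient, one can note via \Cref{lem:evolutionKL} that for any fixed $\mu\in F^\star$ one has $\partial_t\KL(\mu,\mu_t) = c^\top\mu_t - c^\top\mu\le 0$, so $t\mapsto\KL(\mu,\mu_t)$ is non-increasing; this already shows the trajectory cannot oscillate away from $F^\star$, and combined with the $\mu_0$-comparison above it again pins the limit down to the information projection.
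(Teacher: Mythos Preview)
Your proof is correct and follows essentially the same route as the paper: compactness of $P$ plus the sublinear convergence of \Cref{prop:sublinearRate} forces limit points into $F^\star$, and the central path inequality applied against an optimizer gives $\KL(\mu_t,\mu_0)\le\KL(\nu^\star,\mu_0)$, which pins down the limit via continuity of $\KL(\cdot,\mu_0)$. The paper argues identically but compares against an arbitrary $\mu'\in F^\star$ and minimizes at the end, whereas you fix $\nu^\star$ from the start and invoke uniqueness of the information projection explicitly; your version is slightly more careful on that last point, which the paper leaves implicit.
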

\begin{proof}

By compactness of $P$, the sequence $(\mu_{t_n})_{n\in\N}$ has at least one accumulation point for any $t_n\to+\infty$. 
Hence, we can assume without loss of generality that $\mu_{t_n}\to\hat\mu$ and it remains to identify $\hat\mu$ as the information projection $\mu^\star\in F^\star$. 

Surely, we have $\hat\mu\in F^\star$ as $c^\top \hat\mu = \lim_{n\to\infty} c^\top \mu_{t_n} = \max_{\mu\in P} c^\top \mu$ by \Cref{prop:sublinearRate}. 
Further, by the central path property we have for any optimizer $\mu'\in F^\star$ that 
\begin{equation*}
    c^\top \mu_t - t^{-1} \KL (\mu_t, \mu_0) \ge 
    c^\top \hat\mu - t^{-1} \KL (\mu', \mu_0) 
\end{equation*}
and therefore 
\begin{equation*}
    \KL (\mu', \mu_0)  - \KL (\mu_t, \mu_0) \ge t c^\top (\mu'-\mu_t) \ge0.  
\end{equation*}
Hence, we have  
\begin{align*}
    \KL (\hat\mu, \mu_0) = \lim_{n\to\infty} \KL (\mu_{t_n}, \mu_0)  \le \KL (\mu', \mu_0)
\end{align*}
and can conclude by minimizing over $\mu'\in F^\star$. 
\end{proof}

\begin{theorem}\label{thm:convergence-without-uniqueness}
Consider Setting~\ref{setting:LP-inside-Simplex}, assume that the linear program is non-trivial, i.e., that $F^\star\ne P$, where $F^\star$ denotes the face of optimizers, and denote the information projection of $\mu_0$ to $F^\star$ by $\mu^\star\in F^\star$ and set 
\begin{align}\label{eq:DeltaWithoutUniqueness}
    \Delta \coloneqq \min \left\{ \frac{c^\top \mu^\star - 
    c^\top \mu}{\lVert \mu^\star - \mu \rVert_{\TV }} : \mu^\star \in\operatorname{vert}(F^\star), \mu\in N(\mu^\star)\setminus F^\star  \right\}>0. 
\end{align}
Then for any $\kappa\in(0,\Delta)$ there is $t_\kappa\in\R_{\ge0}$ such that for any $t\ge t_\kappa$ we have 
\begin{align}\label{eq:convergence-withoutUniqueness}
    \begin{split}
        \KL (\mu^\star, \mu_t) & \le 
    \KL (\mu^\star, \mu_0) e^{-\kappa(t-t_\kappa)}
    \end{split}
\end{align}
and 
\begin{align}
    c^\top \mu^\star - c^\top \mu_t & \le \Delta \KL (\mu^\star, \mu_0) e^{-\kappa(t-t_\kappa)}. 
\end{align}
\end{theorem}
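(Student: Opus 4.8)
The plan is to reduce the statement to \Cref{prop:ratesTrajectories} by proving, for all sufficiently large $t$, a gradient‑domination inequality $c^\top\mu^\star-c^\top\mu_t\ge\kappa\,\KL(\mu^\star,\mu_t)$. Two comparable bounds are needed: a lower bound on the sub‑optimality gap and an upper bound on the KL‑divergence to the limit $\mu^\star$. The lower bound is immediate from \Cref{lem:TV-LP-estimate-withoutUniqueness} with the total variation seminorm: since the program is non‑trivial, $F^\star\ne P$, the constant $\Delta$ from~\eqref{eq:DeltaWithoutUniqueness} is positive and, as $\mu^\star$ is a maximizer, $c^\top\mu^\star-c^\top\mu_t\ge\Delta\cdot\inf_{\nu\in F^\star}\lVert\nu-\mu_t\rVert_{\TV}$ for all $t$. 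The subtle part is converting this distance‑to‑$F^\star$ bound into one involving $\KL(\mu^\star,\mu_t)$, since the divergence singles out the particular limit point $\mu^\star$.

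To do so I would first show that $\mu^\star$ is the information projection not only of $\mu_0$ but of \emph{every} $\mu_t$ onto $F^\star$. Integrating the identity $\partial_t\KL(\nu,\mu_t)=c^\top\mu_t-c^\top\nu$ from \Cref{lem:evolutionKL} and using that $c^\top\nu$ equals the (constant) optimal value for every $\nu\in F^\star$, the quantity $\KL(\nu,\mu_t)-\KL(\nu,\mu_0)=\int_0^t\bigl(c^\top\mu_s-\max_{\mu\in P}c^\top\mu\bigr)\,\D s$ is seen to be independent of $\nu\in F^\star$. Comparing with $\nu=\mu^\star$ and recalling that $\mu^\star$ minimizes $\KL(\cdot,\mu_0)$ over $F^\star$ by definition gives
\[
  \KL(\mu^\star,\mu_t)=\KL(\nu,\mu_t)+\KL(\mu^\star,\mu_0)-\KL(\nu,\mu_0)\le\KL(\nu,\mu_t)\qquad\text{for all }\nu\in F^\star,\ t>0 .
\]
In particular, choosing a total variation nearest point $\hat\mu_t\in F^\star$ of $\mu_t$, we obtain $\KL(\mu^\star,\mu_t)\le\KL(\hat\mu_t,\mu_t)$.

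Next I would estimate $\KL(\hat\mu_t,\mu_t)$ with the local KL--TV estimate \Cref{cor:TV-KL}, which requires $\min\{(\hat\mu_t)_x:(\hat\mu_t)_x>0\}$ to stay bounded away from $0$. This rests on two facts. First, $\mu^\star$ lies in the relative interior of $F^\star$: if some coordinate $x_0$ in the support of $F^\star$ had $\mu^\star_{x_0}=0$, then moving $\mu^\star$ towards an element $\nu'\in F^\star$ with $\nu'_{x_0}>0$ would decrease $\KL(\cdot,\mu_0)$ with infinite one‑sided slope, contradicting minimality; hence $\operatorname{supp}(\nu)\subseteq\operatorname{supp}(\mu^\star)$ for all $\nu\in F^\star$, in particular for $\hat\mu_t$. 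Second, $\hat\mu_t\to\mu^\star$, because $\lVert\hat\mu_t-\mu_t\rVert_{\TV}\le\lVert\mu^\star-\mu_t\rVert_{\TV}\to0$ by \Cref{cor:implicitBias}. Consequently, for $t$ large, \Cref{cor:TV-KL} applies to $(\hat\mu_t,\mu_t)$ and yields $\KL(\hat\mu_t,\mu_t)\le r_t\cdot\inf_{\nu\in F^\star}\lVert\nu-\mu_t\rVert_{\TV}$ with $r_t\to1$. Combining the three estimates, $c^\top\mu^\star-c^\top\mu_t\ge(\Delta/r_t)\,\KL(\mu^\star,\mu_t)$ for all large $t$.

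Finally, given $\kappa\in(0,\Delta)$, since $r_t\to1$ I would pick $t_\kappa\ge0$ so large that $\Delta/r_t\ge\kappa$ and all preceding "$t$ large" requirements hold for $t\ge t_\kappa$; then \Cref{prop:ratesTrajectories}, applied with $t_0=t_\kappa$ and the constant rate $\kappa_s\equiv\kappa$, delivers $\KL(\mu^\star,\mu_t)\le\KL(\mu^\star,\mu_0)e^{-\kappa(t-t_\kappa)}$ and $c^\top\mu^\star-c^\top\mu_t\le\kappa\,\KL(\mu^\star,\mu_0)e^{-\kappa(t-t_\kappa)}\le\Delta\,\KL(\mu^\star,\mu_0)e^{-\kappa(t-t_\kappa)}$ for $t\ge t_\kappa$. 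The main obstacle is the second step: recognizing that the central path property forces $\mu^\star$ to be the information projection of every $\mu_t$ onto $F^\star$ — this is what makes $\KL(\mu^\star,\mu_t)$, rather than merely the distance from $\mu_t$ to $F^\star$, controllable — together with the relative interiority of $\mu^\star$ that keeps the constant $r_t$ bounded when the local KL--TV estimate is applied at the moving projection point $\hat\mu_t$.
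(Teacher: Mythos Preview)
Your proposal is correct and follows essentially the same route as the paper: take a TV-nearest point $\hat\mu_t\in F^\star$, use \Cref{lem:TV-LP-estimate-withoutUniqueness} for the gap lower bound, the relative interiority of $\mu^\star$ together with $\hat\mu_t\to\mu^\star$ to control the constant in \Cref{cor:TV-KL}, the information-projection property to pass from $\KL(\hat\mu_t,\mu_t)$ to $\KL(\mu^\star,\mu_t)$, and then \Cref{prop:ratesTrajectories}. The one place where you add something is the explicit verification, via integrating \Cref{lem:evolutionKL}, that $\mu^\star$ is the information projection of \emph{every} $\mu_t$ onto $F^\star$; the paper invokes this fact without proof at the corresponding step, so your argument is a welcome clarification rather than a different approach.
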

\begin{proof}
    \Cref{cor:implicitBias} shows that $\mu_t \to \mu^\star$. 
    Let $\mu_t^\star \in F^\star$ denote the $\lVert\cdot\rVert_{\TV}$-projection of $\mu_t$ onto $F^\star$, i.e., be such that 
        \[
            \lVert \mu_t - \mu_t^\star \rVert_{\TV} = \min_{\mu'\in F^\star} \lVert \mu' - \mu_t \rVert_{\TV} \to 0 \quad \text{for } t\to+\infty 
        \]
    as $\mu_t\to\mu^\star\in F^\star$. 
    Now we have 
    \begin{equation*}
        \lVert \mu_t^\star - \mu^\star \rVert_{\TV} \le \lVert \mu_t^\star - \mu_t  \rVert_{\TV} + \lVert \mu_t - \mu^\star \rVert_{\TV} \to 0 \quad \text{for } t\to+\infty 
    \end{equation*}
    and hence $\mu_t^\star\to \mu^\star$. 
    Note that $\mu^\star\in\operatorname{int}(F^\star)$, i.e., has maximal support in $F^\star$ and hence $\mu_t^\star\ll\mu^\star$, see \Cref{app:lem:support}. 
    Together with $\mu^\star_t\to\mu^\star$ this yields 
    \begin{align*}
        \delta_t \coloneqq \min\{ \mu_t^\star(x) : \mu_t^\star(x)>0 \} \to \min\{ \mu^\star(x) : \mu^\star(x)>0 \} > 0. 
    \end{align*}
    Combining \Cref{cor:TV-KL} and \Cref{lem:TV-LP-estimate-withoutUniqueness} yields 
    \begin{align*}
        \KL(\mu^\star_t, \mu_t) 
        \le \frac{\delta_t + \lVert \mu_t^\star - \mu_t \rVert_{\TV}}{\delta_t - \lVert \mu_t^\star - \mu_t \rVert_{\TV}} \cdot \Delta^{-1} (c^\top \mu^\star - c^\top \mu_t), 
    \end{align*}
    where the right hand side converges to $\Delta^{-1} (c^\top \mu^\star - c^\top \mu)$ for $t\to+\infty$. 
    Hence, for $\kappa<\Delta$ and $t$ large enough, we have 
    \begin{align*}
        \kappa \KL(\mu^\star,\mu_t) \le \kappa \KL( \mu_t^\star, \mu_t) \le c^\top \mu^\star - c^\top \mu_t,
    \end{align*}
    where we used that $\mu^\star$ is the information projection of $\mu_t$ to $F^\star$ and $\mu_t^\star\in F^\star$, 
    therefore establishing~\eqref{eq:strongConvexity}. 
    Now we can conclude utilizing \Cref{prop:ratesTrajectories}. 
\end{proof}

A bound on the time $t_\kappa$ could be obtained through a refinement of \Cref{lem:TV-LP-estimate-withoutUniqueness} showing $c^\top \mu^\star - c^\top \mu \ge \Delta \cdot  \lVert \mu^\star - \mu\rVert_{\TV}$ for the information projection $\mu^\star$ of $\mu\in P$ to $F^\star$. 
Another approach to control $t_\kappa$ is to quantify the convergence of $\mu^\star_t\to\mu^\star$. 

\begin{remark}[Estimating the regularization error]\label{rem:regularizationError-withoutUniqueness}
Just like before, we can estimate the regularization error with the same argument as in  \Cref{cor:regularizationError}. 
In this case, the guarantee \eqref{eq:convergence-withoutUniqueness} holds with the entropic radius $R_H$ instead of $\KL(\mu^\star, \mu_0)$. 
\end{remark}

\section{Convergence of Natural Gradient Flows}
In practice, it is often not feasible to perform optimization in the space of measures, and therefore one often resorts to parametric models. 
Natural gradients were introduced by S.\ Amari~\cite{amari1998natural} and are designed to mimic the Fisher-Rao gradient flow by preconditioning the Euclidean gradient in parameter space with the Fisher information matrix. 
To study natural gradient methods, we work in the following setting. 

\begin{setting}\label{setting:NG}
We consider a finite set $\X$ and a polytope $P = \Delta_\X \cap  \mathcal L$ with $P \cap \R^\X_{>0}\ne\emptyset$, where $\mathcal L\subseteq\R^{\X}$ is an affine space.
Further, we consider a differentiable parametrization $\R^p\to\operatorname{int}(P); \theta\mapsto \mu_\theta$ and a (possibly nonlinear) differentiable objective function $f\colon\R_{>0}^\X\to\R$, and write $f(\theta) = f(\mu_\theta)$. 
\end{setting}

We work in continuous time and consider the following evolution of parameters. 

\begin{definition}[Natural gradient flow] 
Consider Setting~\ref{setting:NG}. We call 
\begin{equation}\label{eq:NGF}
    \partial_t \theta_t = F(\theta_t)^+\nabla f(\theta_t) 
\end{equation}
the \emph{natural gradient flow}, where $F(\theta)^+$ denotes the pseudo-inverse of the Fisher information matrix with entries 
\begin{equation}
    F(\theta)_{ij} =  \sum_{x\in\X} \frac{\partial_i \mu_\theta(x) \partial_j \mu_\theta(x)}{\mu(x)} = g^{\FR}_{\mu_\theta}(\partial_i \mu_\theta, \partial_j \mu_\theta). 
\end{equation}
\end{definition}

\subsection{Compatible function approximation} 
In this subsection, and more precisely in
\Cref{prop:actualCompatibleFA}, we describe the natural gradient direction as the minimizer of a linear least squares regression problem with features $\phi_\theta(x) = \nabla_\theta \log \mu_\theta(x)$.
This can be used to estimate the natural gradient from samples drawn from $\mu_\theta$. 

In the context of reinforcement learning similar techniques, albeit for a different notion of natural gradient, have been developed under the name \emph{compatible function approximation}~\cite{sutton1999policy, kakade2001natural, agarwal2021theory}. 

The measure $\mu_t = \mu_{\theta_t}$ does not necessarily evolve according to the Fisher-Rao gradient flow on the polytope $P$ 
\eqref{eq:FRGF} even if $\theta_t$ satisfies the natural gradient flow 
in the parameter space \eqref{eq:NGF}. 
In the next lemma we describe the discrepancy between $\partial_t \mu_t = \partial_t \theta_t^\top \nabla_\theta \mu_{\theta_t}$ and the Fisher-Rao gradient 
$\nabla^{\FR}_P f(\mu_t)$. 

\begin{lemma}\label{prop:compatiblaFA}
Consider Setting~\ref{setting:NG} and a parameter evolution $\partial_t \theta_t = v_t$ and write $\mu_t = \mu_{\theta_t}$. 
Then we have 
\begin{equation}
     \left\lVert \partial_t \mu_t - \nabla^{\FR}_P  f(\mu_t) \right \rVert_{g^{\FR}_{\mu_t}}^2 = L(v_t, \theta_t) - C(\theta_t),
\end{equation}
where 
\begin{equation}\label{eq:definitionCALoss}
    L(w, \theta) \coloneqq \mathbb E_{\mu_{\theta}}\left[\left(w^\top \nabla_\theta\log \mu_{\theta}(x) - \nabla f(\mu_\theta)(x)\right)^2 \right]
\end{equation}
is an $l^2$-regression error and $C(\theta_t) \coloneqq \inf_{\nu\in TP} \left\lVert \nabla^{\FR} f(\mu_t) - \nu \right\rVert_{g^{\FR}_{\mu_{\theta_t}}}^2$ a projection error. 
\end{lemma}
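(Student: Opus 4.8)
The plan is to expand the squared Fisher-Rao norm $\lVert \partial_t\mu_t - \nabla^{\FR}_P f(\mu_t)\rVert_{g^{\FR}_{\mu_t}}^2$ by inserting and removing the unprojected Fisher-Rao gradient $\nabla^{\FR} f(\mu_t)$. Write $a \coloneqq \nabla^{\FR} f(\mu_t)$, $b \coloneqq \nabla^{\FR}_P f(\mu_t)$, and $u \coloneqq \partial_t\mu_t = v_t^\top\nabla_\theta\mu_{\theta_t}$. Since $b$ is by definition (see the paragraph around~\eqref{eq:characterizationFRGradient}) the $g^{\FR}_{\mu_t}$-orthogonal projection of $a$ onto $TP$, and since $u-b \in TP$ while $a-b \perp TP$, the cross term vanishes and we get the Pythagorean identity $\lVert u - b\rVert^2 = \lVert u - a\rVert^2 - \lVert a - b\rVert^2$, where all norms are $\lVert\cdot\rVert_{g^{\FR}_{\mu_t}}$. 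The second term is exactly $C(\theta_t)$ by definition, since $b$ realizes $\inf_{\nu\in TP}\lVert a - \nu\rVert^2$. So it remains to identify $\lVert u - a\rVert_{g^{\FR}_{\mu_t}}^2$ with the regression error $L(v_t,\theta_t)$.

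For that identification, I would unfold both $u$ and $a$ in coordinates. On the one hand, $u(x) = \sum_i (v_t)_i \partial_i\mu_\theta(x) = \mu_\theta(x)\sum_i (v_t)_i \partial_i\log\mu_\theta(x) = \mu_\theta(x)\, v_t^\top\nabla_\theta\log\mu_\theta(x)$. On the other hand, the unprojected Fisher-Rao gradient satisfies $g^{\FR}_{\mu}(a, w) = df(\mu)w = \langle\nabla f(\mu), w\rangle$ for all $w\in\R^\X$, i.e. $\sum_x a(x)w(x)/\mu(x) = \sum_x \nabla f(\mu)(x) w(x)$, which forces $a(x) = \mu_\theta(x)\nabla f(\mu_\theta)(x)$. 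Hence
\begin{align*}
    \lVert u - a\rVert_{g^{\FR}_{\mu_t}}^2
    &= \sum_{x\in\X} \frac{\bigl(\mu_\theta(x) v_t^\top\nabla_\theta\log\mu_\theta(x) - \mu_\theta(x)\nabla f(\mu_\theta)(x)\bigr)^2}{\mu_\theta(x)} \\
    &= \sum_{x\in\X} \mu_\theta(x)\bigl(v_t^\top\nabla_\theta\log\mu_\theta(x) - \nabla f(\mu_\theta)(x)\bigr)^2
    = \mathbb E_{\mu_{\theta_t}}\bigl[\bigl(v_t^\top\nabla_\theta\log\mu_{\theta_t}(x) - \nabla f(\mu_{\theta_t})(x)\bigr)^2\bigr],
\end{align*}
which is precisely $L(v_t,\theta_t)$. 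Combining the two displays gives $\lVert u - b\rVert_{g^{\FR}_{\mu_t}}^2 = L(v_t,\theta_t) - C(\theta_t)$, as claimed.

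The main thing to be careful about — rather than a genuine obstacle — is the orthogonality bookkeeping that makes the cross term vanish: one must verify that $u = \partial_t\mu_t$ indeed lies in $TP$, which holds because $\mu_t\in\operatorname{int}(P)$ stays on the affine hull of $P$ so its velocity is tangent to it, and that $a - b$ is $g^{\FR}_{\mu_t}$-orthogonal to $TP$, which is the defining property of the projection noted after~\eqref{eq:characterizationFRGradient}. A minor point is that $a(x) = \mu_\theta(x)\nabla f(\mu_\theta)(x)$ is well-defined on $\operatorname{int}(P)$ since all coordinates $\mu_\theta(x)$ are strictly positive there, so all the reciprocals above make sense. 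No long computation is needed; the whole proof is the two-line Pythagorean splitting plus the coordinate identification of the two pieces with $L$ and $C$.
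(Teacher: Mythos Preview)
Your proof is correct and follows essentially the same approach as the paper: both use the Pythagorean decomposition arising from $\nabla^{\FR}_P f(\mu_t)$ being the Fisher-Rao projection of $\nabla^{\FR} f(\mu_t)$ onto $TP$, then identify the two pieces with $L(v_t,\theta_t)$ and $C(\theta_t)$ via the same coordinate computation using $\nabla^{\FR} f(\mu) = \nabla f(\mu)\odot\mu$ and $\partial_t\mu_t = v_t^\top\nabla_\theta\mu_{\theta_t}$.
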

\begin{proof}
The Fisher-Rao gradient $\nabla^{\FR}_P  f(\mu_t)$ of $f\colon P\to\R$ is the Fisher-Rao projection of the Fisher-Rao gradient $\nabla^{\FR} f(\mu_t)$ of $f\colon\R_{>0}^{\X}\to\R$ onto $TP$.
Hence, by the Pythagorean theorem, we have
\begin{align*}
    \left\lVert \partial_t \mu_t - \nabla^{\FR}  f(\mu_t) \right \rVert_{g^{\FR}_{\mu_t}}^2 = \left\lVert \partial_t \mu_t - \nabla^{\FR}_P  f(\mu_t) \right \rVert_{g^{\FR}_{\mu_t}}^2 + \left\lVert \nabla^{\FR}_P  f(\mu_t) - \nabla^{\FR}  f(\mu_t) \right \rVert_{g^{\FR}_{\mu_t}}^2. 
\end{align*}

Since $\nabla^{\FR}_P  f(\mu_t)$ is the projection of $\nabla^{\FR} f(\mu_t)$ to $TP$, we obtain 
\begin{align*}
    \left\lVert \partial_t \mu_t - \nabla^{\FR}_P  f(\mu_t) \right \rVert_{g^{\FR}_{\mu_t}}^2 = \left\lVert \partial_t \mu_t - \nabla^{\FR} f(\mu_t)\right \rVert_{g^{\FR}_{\mu_t}}^2 - C(\theta_t). 
\end{align*}
Further, by the chain rule, we have $\partial_t \mu_t = \partial_{t}\theta_t^\top \nabla_\theta \mu_{\theta_t}(x) = v_t^\top \nabla_\theta \mu_{\theta_t}(x)$. 
Using $\nabla^{\FR} f(\mu) = \nabla f(\mu)\odot \mu$ we conclude 
\begin{align*}
    \left\lVert \partial_t \mu_t - \nabla^{\FR}  f(\mu_t)
    \right\rVert_{g^{\FR}_{\mu_\theta}}^2 
   & = \left\lVert v_t^\top \nabla_\theta \mu_\theta - \nabla f(\mu_\theta)\odot \mu_\theta 
    \right\rVert_{g^{\FR}_{\mu_\theta}}^2 
   \\ & = \mathbb E_{\mu_\theta}\left[\frac{\left(v_t^\top \nabla_\theta \mu_\theta(x) - \nabla f(\mu_\theta)(x)\mu_\theta(x)\right)^2}{\mu_\theta(x)^2} \right] 
   \\ &  = \mathbb E_{\mu_\theta}\left[\left(v_t^\top \nabla_\theta\log \mu_{\theta}(x) - \nabla f(\mu_\theta)(x)\right)^2 \right] 
   = L(v_t,\theta) . 
\end{align*}
\end{proof}

The distance between $\partial_t \mu_t$ and the Fisher-Rao gradient $\nabla^{\textup{FR}}_P f(\mu_t)$ is up to a remainder term given by the least squares loss $L(v_t, \theta_t)$, where $v_t = \partial_t \theta_t$. 
The natural gradient is designed such that $\partial_t \mu_t$ is close to $\nabla^{\textup{FR}}_P f(\mu_t)$~\cite{amari1998natural} and hence
we can minimize the least squares loss $L(v, \theta_t)$ with respect to $v$ in order to approximate the natural gradient $v_t\approx F(\theta_t)^+ \nabla f(\theta_t)$. 
An important benefit of this formulation is that it can be used to estimate the natural gradient from data distributed according to $\mu_{\theta_t}$. 
We make this relation between the minimization of $L$ and the natural gradient explicit. 

\begin{proposition}[Compatible function approximation]\label{prop:actualCompatibleFA} 
Consider \Cref{setting:NG}, let $F(\theta)$ denote the Fisher-information matrix, and let $L$ be defined as in~\eqref{eq:definitionCALoss}. 
Then $v\in\R^p$ is a natural gradient at $\theta\in\R^p$, i.e., satisfies $F(\theta)v= \nabla_\theta f(\theta)$, if and only if 
\begin{equation}
    v \in \argmin_{w\in\R^p} L(w, \theta). 
\end{equation}
\end{proposition}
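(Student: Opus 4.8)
The plan is to recognize $L(\cdot,\theta)$ as an ordinary quadratic function on $\R^p$ and to identify its first-order optimality condition with the linear system $F(\theta)v = \nabla_\theta f(\theta)$. First I would expand the square in~\eqref{eq:definitionCALoss}, writing
\begin{equation*}
    L(w,\theta) = \mathbb E_{\mu_\theta}\left[\left(w^\top\nabla_\theta\log\mu_\theta(x)\right)^2\right] - 2\,\mathbb E_{\mu_\theta}\left[w^\top\nabla_\theta\log\mu_\theta(x)\,\nabla f(\mu_\theta)(x)\right] + \mathbb E_{\mu_\theta}\left[\nabla f(\mu_\theta)(x)^2\right].
\end{equation*}
The first term is $w^\top F(\theta) w$ by the definition of the Fisher information matrix (using $\partial_i\log\mu_\theta(x) = \partial_i\mu_\theta(x)/\mu_\theta(x)$), the last term is independent of $w$, and the middle term is $-2 w^\top b(\theta)$ with $b(\theta)_i = \sum_{x}\partial_i\mu_\theta(x)\,\nabla f(\mu_\theta)(x) = \sum_x \partial_i\mu_\theta(x)\,\partial_x f(\mu_\theta) = \partial_i f(\theta)$ by the chain rule. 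Hence $L(w,\theta) = w^\top F(\theta)w - 2 w^\top\nabla_\theta f(\theta) + \text{const}$.

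Next I would invoke convexity: since $F(\theta)$ is positive semidefinite, $L(\cdot,\theta)$ is a convex quadratic, so $w$ is a global minimizer if and only if $\nabla_w L(w,\theta) = 0$, i.e. $2 F(\theta) w - 2\nabla_\theta f(\theta) = 0$, which is exactly $F(\theta)v = \nabla_\theta f(\theta)$. This establishes the equivalence directly, so strictly speaking no separate handling of the rank-deficient case is needed — the statement is phrased in terms of the linear system rather than the pseudo-inverse, and the argmin is over all of $\R^p$, so both sides are automatically affine subspaces (a coset of $\ker F(\theta)$) that coincide.

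The one point requiring a little care — and the only real ``obstacle'' — is the identity $b(\theta) = \nabla_\theta f(\theta)$, i.e. interchanging the parameter gradient with the sum over $\X$: by the chain rule $\partial_i f(\theta) = \partial_i\, f(\mu_\theta) = \sum_{x\in\X}\frac{\partial f}{\partial\mu_x}(\mu_\theta)\,\partial_i\mu_\theta(x) = \sum_{x\in\X}\nabla f(\mu_\theta)(x)\,\partial_i\mu_\theta(x)$, which is legitimate since $\X$ is finite and $f$ is differentiable on $\R_{>0}^\X$ with $\mu_\theta\in\operatorname{int}(P)\subseteq\R_{>0}^\X$. I would also note, as a sanity check consistent with \Cref{prop:compatiblaFA}, that $w^\top F(\theta)w = \lVert w^\top\nabla_\theta\mu_\theta\rVert_{g^{\FR}_{\mu_\theta}}^2$, so the whole computation is just the expansion of $\lVert w^\top\nabla_\theta\mu_\theta - \nabla^{\FR}f(\mu_\theta)\rVert^2_{g^{\FR}_{\mu_\theta}}$ already appearing there; one could alternatively deduce the proposition from \Cref{prop:compatiblaFA} by observing that $L(w,\theta) - C(\theta) = \lVert w^\top\nabla_\theta\mu_\theta - \nabla^{\FR}_P f(\mu_\theta)\rVert^2$, whose minimizers over $w$ are precisely the $w$ with $w^\top\nabla_\theta\mu_\theta = \nabla^{\FR}_P f(\mu_\theta)$, and then checking this last equation is equivalent to $F(\theta)w = \nabla_\theta f(\theta)$ via~\eqref{eq:characterizationFRGradient}. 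I would present the direct quadratic-expansion argument as the main proof since it is shortest and self-contained.
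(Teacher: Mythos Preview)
Your proposal is correct and follows essentially the same approach as the paper: expand $L(w,\theta)$ as a convex quadratic $w^\top F(\theta)w - 2\,\nabla_\theta f(\theta)^\top w + \text{const}$ and read off the normal equation $F(\theta)w=\nabla_\theta f(\theta)$ as the characterization of the minimizers. The paper's version is slightly more compressed, writing the expansion in Fisher--Rao inner-product notation $\lVert w^\top\nabla_\theta\mu_\theta\rVert_{g^{\FR}_{\mu_\theta}}^2 - 2\,g^{\FR}_{\mu_\theta}(w^\top\nabla_\theta\mu_\theta,\nabla^{\FR}f(\mu_\theta))$ before identifying the coefficients, but the substance is identical.
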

\begin{proof}
The objective function $L(w, \theta)$ is given, up to a constant, by 
\begin{equation*}
    \left\lVert w^\top \nabla_\theta \mu_\theta\right\rVert_{g^{\FR}_{\mu_\theta}}^2 -2 g_{\mu_\theta}^{\FR}(w^\top \nabla_\theta \mu_\theta, \nabla^{\FR} f(\mu_\theta)) = w^\top F(\theta) w - 2 \nabla f(\theta)^\top w. 
\end{equation*}
The global minimizes are characterized by the normal equation $F(\theta)w = \nabla f(\theta)$. 
\end{proof}

The term
\begin{equation}
    \varepsilon_t^2 = \min_{w\in \R^p} L(w, \theta_t) = \min_{w\in \R^p} \mathbb E_{\mu_{t}}\left[\left(w^\top \nabla_\theta\log \mu_{\theta_t}(x) - \nabla f(\mu)(x)\right)^2\right]
\end{equation}
is can be interpreted as an \emph{approximation error}. 
Note, however, that the precise nature of the least square loss $L$ is different from the one well-known in reinforcement learning as we discuss in more detail in~\Cref{rem:comparison-kakade}. 
Examining the objective $L(w,\theta)$ and using \Cref{prop:compatiblaFA} we see that the natural gradient flow minimizes the discrepancy between $\partial_t \mu_t$ and the Fisher-Rao gradient $\nabla^{\FR}_P f(\mu_t)$. In this case, the evolution $\partial_t \mu_t$ is given by the orthogonal projection of the Fisher-Rao gradient onto the tangent space of the parametrized model. 
A similar property holds for any natural gradient defined using a Riemannian metric on the polytope~\cite{amari2016information, van2023invariance, muller2023achieving}. 

\begin{corollary}[Projection property]\label{prop:projectionProperty}
    Consider a solution $(\theta_t)_{t\in[0, T)}$ of the natural gradient flow~\eqref{eq:NGF}.
    We denote the projection with respect to the Fisher-Rao metric onto the generalized tangent space  
    \[ 
    T_\theta P \coloneqq \operatorname{span}\{ \partial_{\theta_i} \mu_\theta : i=1, \dots, p \} = \{ w^\top \nabla_\theta \mu_\theta : w\in\R^p \} \subseteq TP 
    \]
    by $P^{\FR}_\theta$. 
    Then it holds that 
    \begin{equation}
        \partial_t \mu_t = P^{\FR}_{\theta_t}(\nabla^{\FR}_P f(\mu_t) ). 
    \end{equation}
    In particular, if $T_{\theta_t} P = TP$ then $\partial_t \mu_t = \nabla^{\FR}_P f(\mu_t)$. 
\end{corollary}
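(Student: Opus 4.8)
The plan is to read off the claim from the least-squares characterization of the natural gradient in \Cref{prop:actualCompatibleFA} together with the Pythagorean-type identity in \Cref{prop:compatiblaFA}, and then to pass from the ambient Fisher-Rao gradient $\nabla^{\FR} f(\mu_t)$ to $\nabla^{\FR}_P f(\mu_t)$ via the tower property of orthogonal projections onto nested subspaces.

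First I would check that $v_t \coloneqq \partial_t \theta_t = F(\theta_t)^+ \nabla f(\theta_t)$ is genuinely a natural gradient in the sense of \Cref{prop:actualCompatibleFA}, i.e.\ that it solves the normal equation $F(\theta_t) v_t = \nabla_\theta f(\theta_t)$. Writing $J_\theta$ for the Jacobian of $\theta\mapsto\mu_\theta$ we have $\nabla_\theta f(\theta) = J_\theta^\top \nabla f(\mu_\theta)$ and $F(\theta) = J_\theta^\top \operatorname{diag}(\mu_\theta)^{-1} J_\theta$; since $\operatorname{diag}(\mu_\theta)^{-1}$ is positive definite, $\ker F(\theta) = \ker J_\theta$ and hence $\operatorname{range} F(\theta) = \operatorname{range} J_\theta^\top \ni \nabla_\theta f(\theta)$, so the pseudo-inverse indeed produces a solution of the normal equation. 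Consequently $v_t \in \argmin_{w\in\R^p} L(w,\theta_t)$ by \Cref{prop:actualCompatibleFA}.

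Next I would use the identity $L(w,\theta) = \lVert w^\top \nabla_\theta\mu_\theta - \nabla^{\FR} f(\mu_\theta) \rVert_{g^{\FR}_{\mu_\theta}}^2$ established inside the proof of \Cref{prop:compatiblaFA} (via $\nabla^{\FR} f(\mu) = \nabla f(\mu)\odot\mu$). Minimizing the right-hand side over $w$ is exactly minimizing the Fisher-Rao distance from $\nabla^{\FR} f(\mu_t)$ to the generalized tangent space $T_{\theta_t} P = \{ w^\top \nabla_\theta \mu_{\theta_t} : w\in\R^p \}$, so $\partial_t \mu_t = v_t^\top \nabla_\theta \mu_{\theta_t} = P^{\FR}_{\theta_t}(\nabla^{\FR} f(\mu_t))$; this is well defined even if the minimizer $v_t$ is not unique, since the orthogonal projection onto a subspace is unique. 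Finally, $\nabla^{\FR}_P f(\mu_t)$ is by definition the $g^{\FR}_{\mu_t}$-orthogonal projection of $\nabla^{\FR} f(\mu_t)$ onto $TP$, and $T_{\theta_t} P \subseteq TP$; the tower property for orthogonal projections onto nested subspaces gives $P^{\FR}_{\theta_t}(\nabla^{\FR} f(\mu_t)) = P^{\FR}_{\theta_t}(\nabla^{\FR}_P f(\mu_t))$, which is the asserted identity. The addendum is immediate: if $T_{\theta_t} P = TP$ then $P^{\FR}_{\theta_t}$ fixes $TP \ni \nabla^{\FR}_P f(\mu_t)$, so $\partial_t \mu_t = \nabla^{\FR}_P f(\mu_t)$.

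The only slightly delicate point is the first step — ensuring that the pseudo-inverse solution $F(\theta_t)^+ \nabla f(\theta_t)$ actually satisfies the normal equation (and is not merely a least-squares solution of a possibly inconsistent system), which is what licenses applying \Cref{prop:actualCompatibleFA}; this is settled by the range identity $\operatorname{range} F(\theta) = \operatorname{range} J_\theta^\top$. Everything else is a routine combination of the already-proven results with elementary Hilbert-space projection facts.
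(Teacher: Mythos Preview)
Your argument is correct and follows essentially the same route as the paper: both use \Cref{prop:actualCompatibleFA} to identify $v_t$ as a minimizer of $L(\cdot,\theta_t)$ and then read off the projection property from the least-squares formulation. The paper goes directly through the identity in \Cref{prop:compatiblaFA}, which already features $\nabla^{\FR}_P f(\mu_t)$, whereas you first project the ambient gradient $\nabla^{\FR} f(\mu_t)$ onto $T_{\theta_t}P$ and then invoke the tower property of nested projections to pass to $\nabla^{\FR}_P f(\mu_t)$; this is a cosmetic difference. Your explicit verification that $\nabla_\theta f(\theta)\in\operatorname{range}F(\theta)$, so that $F(\theta)^+\nabla f(\theta)$ genuinely solves the normal equation, is a point the paper leaves implicit and is a welcome addition.
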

\begin{proof}
By \Cref{prop:actualCompatibleFA} the natural gradient direction $v_t$ is a minimizer of $L(\cdot, \theta_t)$. 
By \Cref{prop:compatiblaFA} this yields 
\begin{align*}
    \lVert \partial_t \mu_t - \nabla^{\FR}_P f(\mu_t) \rVert_{g^{\FR}_{\mu_t}} & = 
    \min_{w\in \R^p} \left\lVert w^\top \nabla_\theta d_{\theta} - \nabla^{\FR}_P f(\mu_t) \right\rVert_{g^{\FR}_{\mu_t}} 
    \min_{\nu\in T_\theta P} \left\lVert \nu - \nabla^{\FR}_P f(\mu_t) \right\rVert_{g^{\FR}_{\mu_t}}.
\end{align*}
In particular, this shows that $\partial_t\mu_t$ is the projection of $\nabla^{\FR}_P f(\mu_t)$ onto $T_\theta P$. 
\end{proof}

\subsection{Convergence of natural gradient flows} 

We start with a generalization of \Cref{prop:sublinearRate} to cover cases where the evolution of $\mu_t$ only approximately follows the Fisher-Rao gradient flow. 

\begin{proposition}[A perturbed convergence result]\label{prop:perturbedConvergence}
    Consider Setting~\ref{setting:LP-inside-Simplex}, a differentiable curve $\mu\colon[0,\infty)\to\operatorname{int}(P)$ and a differentiable convex objective $f\colon\R_{>0}^\X$. 
    Assume that $f$ admits a maximizer $\mu^\star$ over $P$ with value $f^\star$. 
    It holds that \begin{equation}\label{eq:PerturbedSublinearConvergence}
        f^\star - f(\mu_t) \le \frac{\KL (\mu^\star, \mu_0) - \KL (\mu^\star, \mu_t)}{t} + t^{-1} \int_{0}^t \varepsilon_s \delta_s \D s,
    \end{equation}
    where $\delta_t^2 \coloneqq \chi^2(\mu^\star, \mu_t)$ and $\varepsilon_t^2 \coloneqq \left\lVert \nabla^{\FR}_P f(\mu_t)-\partial_t \mu_t\right\rVert_{g^{\FR}_{d_t}}^2$. 
\end{proposition}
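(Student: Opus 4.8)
The plan is to follow the strategy behind \Cref{prop:ratesTrajectories}: differentiate $t\mapsto\KL(\mu^\star,\mu_t)$, bound the derivative from above, and integrate — the only new ingredient being that $\mu_t$ need not solve the Fisher-Rao gradient flow, so one must carry along the discrepancy $r_t\coloneqq\partial_t\mu_t-\nabla^{\FR}_P f(\mu_t)$. First, repeating the computation in the proof of \Cref{lem:evolutionKL} verbatim — it uses only the chain rule and $\mu_t\in\operatorname{int}(P)$, not the gradient-flow equation \eqref{eq:FRGF} — one gets
\begin{equation*}
    \partial_t\KL(\mu^\star,\mu_t)=\langle\nabla^2\phi(\mu_t)\,\partial_t\mu_t,\mu_t-\mu^\star\rangle=g^{\FR}_{\mu_t}(\partial_t\mu_t,\mu_t-\mu^\star).
\end{equation*}
Since $\mu_t,\mu^\star\in\mathcal L$ we have $\mu_t-\mu^\star\in TP$; and since $\partial_t\mu_t\in TP$ and $\nabla^{\FR}_P f(\mu_t)\in TP$, also $r_t\in TP$.

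Writing $\partial_t\mu_t=\nabla^{\FR}_P f(\mu_t)+r_t$ and using the defining identity $g^{\FR}_{\mu_t}(\nabla^{\FR}_P f(\mu_t),v)=\langle\nabla f(\mu_t),v\rangle$ for $v\in TP$ with the choice $v=\mu_t-\mu^\star$, this becomes
\begin{equation*}
    \partial_t\KL(\mu^\star,\mu_t)=\langle\nabla f(\mu_t),\mu_t-\mu^\star\rangle+g^{\FR}_{\mu_t}(r_t,\mu_t-\mu^\star).
\end{equation*}
The first summand is controlled by the first-order inequality for the objective, $\langle\nabla f(\mu_t),\mu_t-\mu^\star\rangle\le f(\mu_t)-f^\star$ (with equality in the linear case of \Cref{setting:LP-inside-Simplex}). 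For the second summand I would apply the Cauchy--Schwarz inequality for the inner product $g^{\FR}_{\mu_t}$,
\begin{equation*}
    g^{\FR}_{\mu_t}(r_t,\mu_t-\mu^\star)\le\lVert r_t\rVert_{g^{\FR}_{\mu_t}}\,\lVert\mu_t-\mu^\star\rVert_{g^{\FR}_{\mu_t}}=\varepsilon_t\delta_t,
\end{equation*}
using that $\lVert r_t\rVert_{g^{\FR}_{\mu_t}}=\varepsilon_t$ by definition and that $\lVert\mu_t-\mu^\star\rVert_{g^{\FR}_{\mu_t}}^2=\sum_{x}(\mu_t(x)-\mu^\star(x))^2/\mu_t(x)=\chi^2(\mu^\star,\mu_t)=\delta_t^2$. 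Combining the two bounds gives the differential inequality $\partial_t\KL(\mu^\star,\mu_t)\le-(f^\star-f(\mu_t))+\varepsilon_t\delta_t$.

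Integrating over $[0,t]$ then yields $\int_0^t\bigl(f^\star-f(\mu_s)\bigr)\,\D s\le\KL(\mu^\star,\mu_0)-\KL(\mu^\star,\mu_t)+\int_0^t\varepsilon_s\delta_s\,\D s$, and I would finish by invoking that $s\mapsto f(\mu_s)$ is non-decreasing along the flow, which gives $t\,(f^\star-f(\mu_t))\le\int_0^t(f^\star-f(\mu_s))\,\D s$; dividing by $t$ is the claimed bound. I expect this last step to be the delicate point: monotonicity of $f\circ\mu$ is not forced by differentiability of $\mu$ alone and must come from the structure of the curve — for a natural gradient flow it holds since $\partial_t f(\mu_t)=\nabla_\theta f(\theta_t)^\top F(\theta_t)^+\nabla_\theta f(\theta_t)\ge0$. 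The remaining steps — recognizing the $\chi^2$-divergence, identifying $\varepsilon_t^2$ as the regression error, and the Cauchy--Schwarz estimate — are routine once the splitting $\partial_t\mu_t=\nabla^{\FR}_P f(\mu_t)+r_t$ is set up.
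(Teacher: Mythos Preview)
Your approach is essentially identical to the paper's: differentiate $\KL(\mu^\star,\mu_t)$, split $\partial_t\mu_t$ into the Fisher--Rao gradient plus a residual, use the defining identity of $\nabla^{\FR}_P f$ together with concavity for the first term and Cauchy--Schwarz for the second, then integrate. The paper's proof ends with ``Integration and rearranging now yields~\eqref{eq:PerturbedSublinearConvergence}'' and does not spell out the passage from $\int_0^t(f^\star-f(\mu_s))\,\D s$ to $t\,(f^\star-f(\mu_t))$.

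Your flagging of the monotonicity step is therefore well-placed: it is exactly the point the paper leaves implicit, and for an \emph{arbitrary} differentiable curve (as the proposition is stated) it does not follow from the hypotheses. Your justification for the intended application is correct --- along a natural gradient flow one has $\partial_t f(\mu_{\theta_t})=\nabla_\theta f(\theta_t)^\top F(\theta_t)^+\nabla_\theta f(\theta_t)\ge0$ --- and this is the only setting in which the proposition is subsequently invoked (\Cref{cor:convergenceGeneral}, \Cref{cor:sublinearConvergenceNPG}). So your proof is the paper's proof, made more honest about where the extra structure is used.
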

\begin{proof}
    We compute 
    \begin{align*}
        \partial_t \KL (\mu^\star, \mu_t) & = - \partial_t \phi(\mu_t) - \partial_t \langle \nabla \phi(\mu_t), \mu^\star - \mu_t \rangle 
        = \langle \nabla^2 \phi(\mu_t)\partial_t \mu_t, \mu_t - \mu^\star \rangle 
        \\ & 
        = g_{\mu_t}^{\FR}(\partial_t \mu_t, \mu_t - \mu^\star) 
        \\ & = g_{\mu_t}^{\FR}(\nabla^{\FR}_P f(\mu_t), \mu_t - \mu^\star) + g_{\mu_t}^{\FR}(\nabla^{\FR}_P f(\mu_t)-\partial_t \mu_t, \mu_t - \mu^\star) 
        \\ & = \nabla f(\mu_t)^\top( \mu_t - \mu^\star) + g_{\mu_t}^{\FR}(\nabla^{\FR}_P f(\mu_t)-\partial_t \mu_t, \mu_t - \mu^\star) 
        \\ & \le \nabla f(\mu_t)^\top( \mu_t - \mu^\star) + \varepsilon_t\delta_t 
    \le f(\mu_t) - f(\mu^\star) + \varepsilon_t\delta_t, 
    \end{align*}
    where we used \Cref{prop:compatiblaFA} and \Cref{prop:actualCompatibleFA} as well as $\lVert \mu_t - \mu^\star \rVert_{g_{\mu_t}^{\FR}}^2 = \chi^2(\mu^\star, \mu_t)$. 
    Integration and rearranging now yields~\eqref{eq:PerturbedSublinearConvergence}. 
\end{proof}

If $(\mu_t)_{t\ge0}$ solves the Fisher-Rao gradient flow, we have $\varepsilon_t=0$ and recover  \Cref{prop:sublinearRate}. 
For natural gradient flows, we obtain the following result. 

\begin{corollary}\label{cor:convergenceGeneral}
Consider Setting~\ref{setting:NG} and a solution $(\theta_t)_{t\ge0}$ of the natural gradient flow~\eqref{eq:NGF} for a convex objective $f$ and set $\mu_t\coloneqq\mu_{\theta_t}$. 
Then~\eqref{eq:PerturbedSublinearConvergence} holds with 
\begin{equation}
    \varepsilon_t^2 \le \min_{w\in\R^p} \mathbb E_{\mu_{\theta}}\left[\left(w^\top \nabla_\theta\log \mu_{\theta_t}(x) - \nabla f(\mu_t)(x)\right)^2 \right]. 
\end{equation}
\end{corollary}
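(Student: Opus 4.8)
The plan is to combine the perturbed convergence estimate of \Cref{prop:perturbedConvergence} with the compatible function approximation description of the natural gradient flow developed in \Cref{prop:compatiblaFA} and \Cref{prop:actualCompatibleFA}. Concretely, \Cref{prop:perturbedConvergence} already yields~\eqref{eq:PerturbedSublinearConvergence} for \emph{any} differentiable curve $\mu\colon[0,\infty)\to\operatorname{int}(P)$, with the perturbation term controlled by $\varepsilon_t^2 = \lVert \nabla^{\FR}_P f(\mu_t) - \partial_t\mu_t\rVert_{g^{\FR}_{\mu_t}}^2$. So the only thing to verify is that when $\mu_t = \mu_{\theta_t}$ and $(\theta_t)_{t\ge0}$ solves the natural gradient flow~\eqref{eq:NGF}, this $\varepsilon_t^2$ admits the claimed bound by the minimal regression error.

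First I would apply \Cref{prop:compatiblaFA} with the parameter velocity $v_t = \partial_t\theta_t = F(\theta_t)^+\nabla f(\theta_t)$, which gives
\begin{equation*}
    \left\lVert \partial_t\mu_t - \nabla^{\FR}_P f(\mu_t)\right\rVert_{g^{\FR}_{\mu_t}}^2 = L(v_t,\theta_t) - C(\theta_t) \le L(v_t,\theta_t).
\end{equation*}
Next I would invoke \Cref{prop:actualCompatibleFA}: since $v_t$ satisfies the normal equation $F(\theta_t)v_t = \nabla_\theta f(\theta_t)$ (here using that $F(\theta_t)^+$ applied to $\nabla f(\theta_t)$ solves the normal equation, as the pseudo-inverse always returns a solution when one exists), it is a minimizer of $L(\cdot,\theta_t)$, so $L(v_t,\theta_t) = \min_{w\in\R^p} L(w,\theta_t)$. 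Unfolding the definition~\eqref{eq:definitionCALoss} of $L$ then gives exactly
\begin{equation*}
    \varepsilon_t^2 \le \min_{w\in\R^p}\mathbb E_{\mu_{\theta_t}}\left[\left(w^\top\nabla_\theta\log\mu_{\theta_t}(x) - \nabla f(\mu_t)(x)\right)^2\right].
\end{equation*}
Plugging this into \Cref{prop:perturbedConvergence} (whose hypotheses — $\mu_t\in\operatorname{int}(P)$ by Setting~\ref{setting:NG}, differentiability of the curve, convexity of $f$, existence of a maximizer — are all in place or assumed) finishes the argument.

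The mildly delicate point, and the only thing I would be careful to state, is the subtlety that $F(\theta_t)^+\nabla f(\theta_t)$ genuinely is \emph{a} natural gradient in the sense of \Cref{prop:actualCompatibleFA}: one needs $\nabla_\theta f(\theta_t)$ to lie in the range of $F(\theta_t)$ so that the normal equation is solvable, which holds because $\nabla_\theta f(\theta_t) = \nabla_\theta\mu_{\theta_t}^\top\nabla f(\mu_{\theta_t})$ lies in the row space of $\nabla_\theta\mu_{\theta_t}$, and $F(\theta_t) = \nabla_\theta\mu_{\theta_t}\,\mathrm{diag}(\mu_{\theta_t})^{-1}\nabla_\theta\mu_{\theta_t}^\top$ has the same range. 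Granting this, the pseudo-inverse solution minimizes $L(\cdot,\theta_t)$ and all steps above go through; I expect no real obstacle, as this corollary is essentially an assembly of the two preceding results.
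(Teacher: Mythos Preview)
Your proposal is correct and follows exactly the paper's approach: the paper's proof is the single line ``Combine \Cref{prop:perturbedConvergence} with \Cref{prop:compatiblaFA} and \Cref{prop:actualCompatibleFA},'' which is precisely what you do. Your additional care in verifying that $\nabla_\theta f(\theta_t)$ lies in the range of $F(\theta_t)$ (so that the pseudo-inverse solution genuinely satisfies the normal equation) is a welcome clarification that the paper leaves implicit.
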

\begin{proof}
    Combine \Cref{prop:perturbedConvergence} with \Cref{prop:compatiblaFA} and \Cref{prop:actualCompatibleFA}. 
\end{proof}

\begin{remark}[Baseline] 
In reinforcement learning, baselines are often used when estimating the natural policy gradient from samples to reduce the variance of the estimates~\cite{10.5555/2074022.2074088}. 
This amounts to projecting the gradient of the objective to the tangent space of the model. 
In our setting, this corresponds to projecting $\nabla f(\mu)\odot\mu$ to the tangent space $TP$ with respect to the Fisher-Rao metric $g^{\FR}_\mu$, see also~\cite[Subsection 4.1.1]{suarez2023perspectives}. 
In the special case $P=\Delta_\X$ the Fisher-Rao projection of $\nabla f(\mu)\odot\mu$ is given by $\nabla f(\mu)\odot\mu - \kappa\mu$, where $\kappa = \sum_{x}\nabla f(\mu)(x)$ and the corresponding compatible function approximation objective is given by 
\begin{align*}
    \tilde L(w, \theta) \coloneqq \mathbb E_{\mu_{\theta}}\left[\left(w^\top \nabla_\theta\log \mu_{\theta}(x) - (\nabla f(\mu)(x)-\kappa)\right)^2 \right]. 
\end{align*}
\end{remark}

\subsection{Global convergence for multi-player games} 
With function approximation \Cref{cor:convergenceGeneral} ensures sublinear convergence $O(\frac1t)$ up to a remainder compared to the linear rate global convergence guarantee of the Fisher-Rao gradient flow. 
With general function approximation, it is however not possible to guarantee global convergence~\cite{bhandari2024global} and also for other natural gradient methods the linear convergence guarantees are lost when working with function approximation~\cite{alfano2022linear, cayci2024convergence} unless one uses regularization. 
Here, we identify a scenario, where despite being in a function approximation setting, we can ensure global linear convergence. 

For a rich enough parametrization \Cref{prop:projectionProperty} ensures that $(\mu_{\theta_t})_{t\ge0}$ follows the Fisher-Rao gradient flow in which case \Cref{thm:convergenceFRGF} implies the linear convergence of the natural gradient flow~\eqref{eq:NGF}. 
A common example is the softmax parametrization $\mu_\theta(x)\propto e^{\theta(x)}$. 
For multi-player games with suitable payoff structure, the dynamics of the individual players decouple~\cite{boll2024geometric}, which allows us to show global convergence for models with exponentially fewer parameters than the softmax parametrization. 

\begin{theorem}\label{thm:multiPlayer}
    Consider a differentiable parametrization of conditional probabilities $\{m_\theta : \theta\in\mathbb R^p\}=\operatorname{int}(\Delta_\X^n)$, where $n\in\N$ and $\X$ is a finite set, and suppose that $\operatorname{span}\left\{ \partial_{\theta_i} m_\theta : i=1, \dots, p \right\} = T \Delta_\X^n$ for all $\theta\in\R^p$.
    Define a corresponding parametric independence model as 
    \begin{align}
        {\mu}_{{\theta}}({x}) \coloneqq \prod_{i=1}^n m_\theta(x_i|i) \quad \text{for all } {x} \in\X^n. 
    \end{align}
    Further, consider 
    \begin{align}\label{eq:factorization}
    c\in\operatorname{span} \Big\{ \mathds{1}_{\X} \otimes\dots\otimes \underset{i\text{-th}}{\delta_x} \otimes\dots\otimes \mathds{1}_\X : x\in\X, i=1, \dots, n \Big\} \subseteq \R^{\X^n} 
    \end{align}
    and the linear payoff $f({\mu})={c}^\top{\mu}$ and the natural gradient flow~\eqref{eq:NGF}. 
    Then $(\mu_{\theta_t})_{t\ge0}$ solves the Fisher-Rao gradient flow in $\Delta_{\X^n}$ 
    and hence, we have 
    \begin{align}
        \mu_t(x) = \frac{e^{tc(x)}}{\sum_{x'} e^{tc(x')}} \quad \text{for all } x\in\X^n. 
    \end{align}
\end{theorem}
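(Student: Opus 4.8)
The plan is to show that the independence model $\mu_\theta$, while having far fewer parameters than the full simplex $\Delta_{\X^n}$, nonetheless has a tangent space that is rich enough \emph{relative to the given cost vector} $c$, so that \Cref{prop:projectionProperty} forces $(\mu_{\theta_t})_{t\ge0}$ to coincide with the Fisher-Rao gradient flow on $\Delta_{\X^n}$. The closed-form expression then follows from the $P=\Delta_\X$ case already recorded in the Tightness remark. Concretely, I would proceed as follows.

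First, I would unwind the parametrization. Writing $\theta\mapsto m_\theta\in\operatorname{int}(\Delta_\X^n)$ for the conditional model with $\operatorname{span}\{\partial_{\theta_i}m_\theta\}=T\Delta_\X^n$, the product rule gives, for the independence model $\mu_\theta(x)=\prod_{j=1}^n m_\theta(x_j|j)$, that $\partial_{\theta_i}\log\mu_\theta(x)=\sum_{j=1}^n \partial_{\theta_i}\log m_\theta(x_j|j)$. Hence the generalized tangent space $T_\theta\mu \coloneqq \operatorname{span}\{\partial_{\theta_i}\mu_\theta\}$ consists exactly of the measures of the form $\mu_\theta\odot\big(\sum_{j=1}^n g_j(x_j)\big)$ with $g_j\colon\X\to\R$ ranging over functions with $\mathbb E_{m_\theta(\cdot|j)}[g_j]=0$ — i.e. the "additively separable" tangent directions. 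The key structural input \eqref{eq:factorization} says precisely that $c$ itself lies in the span of additively separable functions $x\mapsto \sum_j c_j(x_j)$ (the tensor-sum structure), so that $\nabla f(\mu)=c$ decomposes in the same way the tangent space does.

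Second, I would invoke \Cref{prop:projectionProperty}: along the natural gradient flow, $\partial_t\mu_t = P^{\FR}_{\theta_t}(\nabla^{\FR}_{\Delta_{\X^n}} f(\mu_t))$, the Fisher-Rao projection of the constrained Fisher-Rao gradient onto $T_{\theta_t}\mu$. So it suffices to show $\nabla^{\FR}_{\Delta_{\X^n}} f(\mu_t)\in T_{\theta_t}\mu$ for every $t$, since then the projection is the identity and $\partial_t\mu_t=\nabla^{\FR}_{\Delta_{\X^n}} f(\mu_t)$, i.e. $\mu_t$ solves the Fisher-Rao gradient flow on the full simplex. Now $\nabla^{\FR} f(\mu)=\nabla f(\mu)\odot\mu = c\odot\mu$, and the constrained gradient on $\Delta_{\X^n}$ is obtained by the centering $\nabla^{\FR}_{\Delta_{\X^n}} f(\mu) = \mu\odot(c-\kappa\mathds 1)$ with $\kappa=\mathbb E_\mu[c]$ (as in the Baseline remark for $P=\Delta_\X$). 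Since $c$ is a tensor-sum $\sum_j c_j(x_j)$ by \eqref{eq:factorization} and $\mathds 1$ is trivially one, $c-\kappa\mathds 1$ is again additively separable; subtracting within each coordinate the conditional mean $\mathbb E_{m_{\theta}(\cdot|j)}[c_j]$ (which only changes $\kappa$) exhibits $\mu\odot(c-\kappa\mathds 1)$ as an element of $T_\theta\mu$ in the normalized form above. This is the heart of the argument. Finally, once $\mu_t$ is known to solve the Fisher-Rao gradient flow on $\Delta_{\X^n}$ with linear potential $c$, the first-order stationarity conditions for the central-path characterization \eqref{eq:centralPath} with $\mu_0=\mu_{\operatorname{Unif}}$ — or, directly, solving $\langle\nabla^2\phi(\mu_t)\partial_t\mu_t,v\rangle=\langle c,v\rangle$ on $T\Delta_{\X^n}$ — give $\log\mu_t(x) = tc(x) + \text{const}$, hence $\mu_t(x)=e^{tc(x)}/\sum_{x'}e^{tc(x')}$.

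The main obstacle is the second step: verifying that the full-simplex constrained Fisher-Rao gradient $\mu_t\odot(c-\mathbb E_{\mu_t}[c]\mathds 1)$ actually lies in $T_{\theta_t}\mu$, i.e. that the \emph{product} of the independence measure with an additively separable function is again a tangent vector of the independence model. This requires care because $T_\theta\mu$ is not simply "$\mu_\theta$ times additively separable functions" but rather that object intersected with $T\Delta_{\X^n}$, and one must check the normalization / zero-mean conditions match up coordinatewise using the hypothesis $\operatorname{span}\{\partial_{\theta_i}m_\theta\}=T\Delta_\X^n$. One should also be mildly careful that the claimed identity $\partial_t\mu_t=\nabla^{\FR}_{\Delta_{\X^n}}f(\mu_t)$ holds for all $t$ (so that $\mu_t$ stays in $\operatorname{int}(\Delta_{\X^n})$ and the flow is genuinely the Fisher-Rao flow), which is automatic from well-posedness once the two vector fields agree, together with the fact that $\mu_0=\mu_{\operatorname{Unif}}$ lies in the image of the parametrization. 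Everything else is bookkeeping with tensor products and the already-established projection property.
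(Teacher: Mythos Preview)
Your proposal is correct and arrives at the same conclusion as the paper, but by a more explicit, self-contained route. The paper argues in two conceptual strokes, both outsourced to~\cite{boll2024geometric,montufar2014fisher}: first, the Segre embedding $\Delta_\X^n\hookrightarrow\Delta_{\X^n}$ is an isometry for the Fisher--Rao metric, so the natural gradient flow on the regular parametrization of $\Delta_\X^n$ pushes forward to the Fisher--Rao gradient flow of $f|_{\mathcal I}$ on the independence model $\mathcal I$; second, the factorization condition~\eqref{eq:factorization} makes $\mathcal I$ invariant under the ambient Fisher--Rao flow on $\Delta_{\X^n}$, so the restricted flow \emph{is} the ambient flow. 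You replace both citations by a direct computation: you identify $T_\theta\mu$ as $\{\mu_\theta\odot\sum_j g_j(x_j):\mathbb E_{m_\theta(\cdot|j)}[g_j]=0\}$ via the product rule and the regularity hypothesis, observe that $\nabla^{\FR}_{\Delta_{\X^n}}f(\mu_\theta)=\mu_\theta\odot(c-\mathbb E_{\mu_\theta}[c])$ lies in this space because $c=\sum_j c_j(x_j)$ and the centering splits as $\sum_j(c_j-\mathbb E_{m_\theta(\cdot|j)}[c_j])$, and then conclude via the projection property of \Cref{prop:projectionProperty}. This is exactly the content of the invariance statement the paper cites, unpacked by hand; your approach is more elementary and keeps everything internal to the paper, while the paper's phrasing makes clearer why the result is geometrically natural. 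One small point: your final remark that $\mu_0=\mu_{\operatorname{Unif}}$ is an assumption implicit in the theorem statement (since the closed form forces $\mu_0$ to be uniform), and the paper is equally silent on this; it would be cleaner in either proof to note that for general $\mu_0$ one obtains $\mu_t(x)\propto\mu_0(x)e^{tc(x)}$.
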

\begin{proof}
The Segre embedding $\Delta_\X^n\to\Delta_{\X^n}$, $(\mu_i)_{i=1, \dots, n} \mapsto \otimes_{i=1}^n \mu_i$ is an isometry with respect to the product Fisher-Rao metric, i.e., the sum of the Fisher metrics over the individual factors, and the Fisher-Rao metric~\cite{montufar2014fisher, boll2024geometric}. 
In particular, this implies that $(\mu_{\theta_t})_{t\ge0}$ solves the Fisher-Rao gradient flow with respect to $f$ restricted the independence model 
\begin{align*}
    \mathcal{I} \coloneqq \left\{ \bigotimes_{i=1}^n \mu_i : \mu_i\in\Delta_\X \text{ for } i=1, \dots, n \right\} \subseteq \Delta_{\X^n}, 
\end{align*}
as $\partial_t \mu_t = P_{T_{\mu_t}\mathcal I} \nabla^{\FR} f(\mu_t) = \nabla^{\FR} f|_{\mathcal I}(\mu_t)$, see \cite{van2023invariance}. Condition~\eqref{eq:factorization} implies that $f$ factorizes along the marginalization map and hence the independence model $\mathcal I$ is invariant under the Fisher-Rao gradient flow~\cite{boll2024geometric}. 
Thus, $(\mu_{\theta_t})_{t\ge0}$ solves the Fisher-Rao gradient flow with potential $f$ in $\Delta_{\X^n}$, which can be solved explicitly~\cite{weed2018explicit}. 
\end{proof}

Note that a model parametrizing $\Delta_\X^n$ only requires $n (\lvert \X\rvert-1)$ parameters, whereas a model parametrizing the joint distributions $\Delta_{\X^n}$ requires $\lvert \X \rvert^n-1$ parameters. 
However, we require the cost vector $c$ to lie in an $n\lvert X \rvert$-dimensional subspace of $\R^{\X^n}$. 

\section{Convergence of State-Action Natural Policy Gradients}\label{sec:state-action-NPG}

Having studied general linear programs we now turn to the reward optimization problem in infinite-horizon discounted Markov decision processes. 
Reward optimization is well known to be equivalent to a linear program and the state-action natural policy gradient flow corresponds to the Fisher-Rao gradient flow inside the state-action polytope~\cite{kallenberg1994survey, muller2023geometry}. 
We give a short overview of the required notions and refer to~\cite{hernandez2012discrete} for a thorough introduction to Markov decision processes.  

In Markov decision processes (MDPs), we are concerned with controlling the state $s\in\bS$ of some system through an action $a\in\A$ in order to achieve an optimal behavior over time. 
The evolution of the system is described by a Markov kernel $P\in \Delta_{\bS}^{\bS \times \A}$, where $P(s'|s,a)$ denotes the probability of transitioning from state $s$ to $s'$ under action $a$. 
Here, we work with finite state and action spaces $\bS$ and $\A$. 
A \emph{(stochastic) policy} is a Markov kernel $\pi\in\Delta_\A^\bS$, where $\pi(a|s)$ denotes the probability of selecting action $a$ when in state $s$. 
For a fixed policy $\pi\in\Delta_\A^\bS$ and an initial distribution $\mu\in\Delta_\bS$ we obtain a Markov process over $\bS\times \A$ according to $S_0 \sim \mu$ and 
\begin{equation}
    A_t \sim \pi(\cdot|S_t), \quad S_{t+1} \sim P(\cdot|S_t, A_t) \quad \text{for } t\in\mathbb N , 
\end{equation}
and we denote its law by $\mathbb P^{\pi, \mu}$. 
We consider a \emph{instantaneous reward vector} $r\in\R^{\bS\times\A}$ indicating how favorable a certain state and action combination is. 
As a criterion for the performance of a policy $\pi$ we consider the \emph{infinite horizon discounted reward} 
\begin{equation}
    R(\pi) \coloneqq (1-\gamma) \mathbb E_{\mathbb P^{\pi, \mu}}\left[ \sum_{t\in\mathbb N} \gamma^t r(S_t, A_t) \right],
\end{equation}
where the \emph{discount factor} $\gamma\in[0, 1)$ is fixed and ensures convergence. 
The reward optimization problem is given by 
\begin{equation}
    \max R(\pi) \quad \text{subject to } \pi\in\Delta_\A^\bS. 
\end{equation}
An important role in Markov decision processes play the \emph{state-action distributions} $d^\pi\in\Delta_{\bS\times\A}$, which are given by 
\begin{equation}
    d^\pi(s,a) \coloneqq (1-\gamma)  \sum_{t\in\mathbb N} \gamma^t \mathbb P^{\pi, \mu}(S_t=s, A_t=a). 
\end{equation}
They determine the reward as $R(\pi) = \sum_{s\in\bS, a\in\A} r(s,a) d^\pi(s, a) = r^\top d^\pi$. 
The set of state-action distributions has been characterized as a polytope, see~\cite{derman1970finite}. 

\begin{proposition}[State-action polytope]
    The set $\cD = \{d^\pi : \pi\in\Delta_\A^\bS\} \subseteq\Delta_{\bS\times\A}$ of state-action distributions is a polytope given by 
    \begin{equation}
        \cD = \Delta_{\bS\times\A} \cap \left\{ d\in \R^{\bS\times\A} : \ell_s(d) = 0 \text{ for all } s\in\bS \right\},
    \end{equation}
    where the defining linear equations are given by 
    \begin{equation}
        \ell_s(d) = \sum_{a\in\A} d(s,a) - \gamma \sum_{s'\in\bS, a'\in\A} P(s|s', a') d(s', a') - (1-\gamma) \mu(s).
    \end{equation}
\end{proposition}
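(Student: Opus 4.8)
The plan is to prove the two set inclusions separately and then observe that the right-hand side is manifestly a polytope.

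For the inclusion $\cD \subseteq \Delta_{\bS\times\A} \cap \{d : \ell_s(d) = 0 \text{ for all } s\in\bS\}$, I would fix a policy $\pi\in\Delta_\A^\bS$ and verify both conditions for $d^\pi$. Nonnegativity of $d^\pi$ is immediate from its definition, and $\sum_{s,a} d^\pi(s,a) = (1-\gamma)\sum_{t\in\N}\gamma^t = 1$ since $\PP^{\pi,\mu}$ is a probability measure at each time. For the balance equations, set $\nu^\pi(s) \coloneqq \sum_{a} d^\pi(s,a) = (1-\gamma)\sum_{t\in\N}\gamma^t \PP^{\pi,\mu}(S_t = s)$ and use the Markov property: $\PP^{\pi,\mu}(S_0 = s) = \mu(s)$ and $\PP^{\pi,\mu}(S_{t+1} = s) = \sum_{s',a'} P(s|s',a')\,\PP^{\pi,\mu}(S_t = s', A_t = a')$ for $t\ge 0$. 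Multiplying the latter by $(1-\gamma)\gamma^{t+1}$, summing over $t$, and splitting off the $t=0$ term of $\nu^\pi$ yields $\nu^\pi(s) = (1-\gamma)\mu(s) + \gamma\sum_{s',a'} P(s|s',a') d^\pi(s',a')$, which is precisely $\ell_s(d^\pi) = 0$.

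For the reverse inclusion, I would take $d\in\Delta_{\bS\times\A}$ with $\ell_s(d) = 0$ for all $s$ and reconstruct a policy. Put $\nu(s)\coloneqq \sum_a d(s,a)$ and define $\pi(a|s)\coloneqq d(s,a)/\nu(s)$ whenever $\nu(s)>0$ and, say, uniformly otherwise, so that $d(s,a) = \nu(s)\pi(a|s)$ for all $s,a$ — the identity being trivial when $\nu(s)=0$ because then $d(s,a)=0$ by nonnegativity. Substituting into $\ell_s(d)=0$ and abbreviating $P^\pi(s|s')\coloneqq\sum_{a'}P(s|s',a')\pi(a'|s')$ gives $\nu(s) = (1-\gamma)\mu(s) + \gamma\sum_{s'} P^\pi(s|s')\nu(s')$. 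The affine map $w\mapsto (1-\gamma)\mu + \gamma (P^\pi)^\top w$ on $\R^\bS$ is a $\gamma$-contraction in the $\ell^1$-norm since $P^\pi$ is stochastic and $\gamma<1$, hence it has a unique fixed point; but the discounted state-occupancy measure $\nu^\pi(s) = (1-\gamma)\sum_{t\in\N}\gamma^t \PP^{\pi,\mu}(S_t=s)$ satisfies the very same fixed-point equation by the computation of the previous paragraph applied to $\pi$. Therefore $\nu = \nu^\pi$, and consequently $d^\pi(s,a) = \nu^\pi(s)\pi(a|s) = \nu(s)\pi(a|s) = d(s,a)$, so $d = d^\pi\in\cD$.

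Finally, $\Delta_{\bS\times\A}\cap\{d : \ell_s(d)=0\text{ for all }s\in\bS\}$ is the intersection of the polytope $\Delta_{\bS\times\A}$ with an affine subspace, hence a bounded polyhedron, i.e., a polytope. I expect the main obstacle to be the reverse inclusion: one must argue that the policy extracted from $d$ genuinely reproduces $d$, which hinges on identifying $\nu$ with the occupancy measure $\nu^\pi$ via uniqueness of the contraction's fixed point, and on carefully handling the states $s$ with $\nu(s)=0$, where $\pi(\cdot|s)$ is underdetermined but the equality $d(s,\cdot) = d^\pi(s,\cdot) = 0$ nonetheless holds.
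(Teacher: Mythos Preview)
Your argument is correct and is the standard one (essentially the argument in Derman~1970 or Puterman). Note, however, that the paper does not supply its own proof of this proposition: it states the result and attributes it to~\cite{derman1970finite}, so there is no paper proof to compare against. Your two-inclusion strategy with the contraction/fixed-point identification of $\nu$ with $\nu^\pi$ is exactly what one finds in the classical references, and your handling of the degenerate states with $\nu(s)=0$ is fine.
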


We refer to $\cD$ as the \emph{state-action polytope}. 
This leads to the linear programming formulation of Markov decision processes~\cite{kallenberg1994survey}, given by\footnote{Sometimes, this is referred to as the dual linear programming formulation of Markov decision processes, where the primal linear program has the optimal value function as its solution.} 
\begin{equation}\label{eq:LP-MDP}
    \max r^\top d \quad \text{subject to } d\in \cD.  
\end{equation}
The state-action polytope $\cD = \Delta_{\bS\times\A}\cap\mathcal L$ falls under the class of polytopes studied in \Cref{sec:linearConvergeFRGF}. 
Given a state-action distribution $d\in\cD$, we can compute a corresponding policy $\pi\in\Delta_{\A}^\bS$ with $d = d^\pi$ by conditioning,  
\begin{equation}
    \pi(a|s) = \frac{d(s,a)}{\sum_{a'\in\A} d(s,a')} \quad \text{for all } a\in\A, s\in\bS, 
\end{equation}
if this is well-defined, see~\cite{muller2022pomdps, laroche2023occupancy}, which leads us to the following assumption.

\begin{asu}[State exploration]\label{s_asu:exploration} 
For any policy $\pi\in\Delta_\A^\bS$ the discounted state distribution is positive, i.e., $\sum_{a\in\A}d^\pi(s,a)>0$ for all $s\in\bS$. 
\end{asu}

This assumption is satisfied if $\mu(s)>0$ for all $s\in\bS$ as $\sum_{a\in\A}d^\pi(s,a)\ge (1-\gamma)\mu(s)$. 
This assumption is standard in linear programming approaches to Markov decision processes; policy gradient methods can fail to converge if it is violated~\cite{kallenberg1994survey, mei2020global}. 

Policy optimization algorithms parameterize the policy $\pi_\theta$ and optimize  $\theta$. 
As we study gradient-based approaches we work under the following assumption. 

\begin{asu}[Differentiable parametrization]\label{s_asu:differentiable}
We consider a differentiable policy parametrization  $\R^p\to\operatorname{int}(\Delta_\A^\bS); \theta\mapsto \pi_\theta$. 
\end{asu}
We consider continuous-time natural policy gradient methods that optimize the parameters $\theta$ of a parametric policy $\pi_\theta$ 
according to 
\begin{equation}\label{eq:NPGF}
    \partial_t \theta_t = G(\theta_t)^+ \nabla R(\theta_t),
\end{equation}
where we write $R(\theta) = R(\pi_\theta)$. 
Here $G(\theta)$ denotes a Gramian matrix with entries $G(\theta)_{ij} = g_{d_\theta}(\partial_{\theta_i}d_\theta, \partial_{\theta_j}d_\theta)$, where we write $d_\theta = d^{\pi_\theta}$ and $g_d$ denotes a Riemannian metric on the state-action polytope $\cD$. 
In this context, the matrix $G(\theta)$ is referred to as a preconditioner. 
Various choices have been proposed for $G(\theta)$, for example Kakade~\cite{kakade2001natural} suggested 
\begin{equation}
    G_{\operatorname{K}}(\theta) \coloneqq \sum_{s} d_\theta(s)  \sum_{a} \frac{\partial_{\theta_i} \pi_\theta(a|s) \partial_{\theta_j} \pi_\theta(a|s)}{\pi_\theta(a|s)}, 
\end{equation}
which is a weighted sum of Fisher-information matrices over the individual states~\cite{kakade2001natural, bagnell2003covariant, peters2008natural}.
This has been studied extensively in the literature, see for example~\cite{agarwal2021theory, cen2021fast, cayci2024convergence, khodadadian2022linear}, and we refer to it as the \emph{Kakade NPG}. 
We focus on the so-called \emph{state-action natural policy gradient} given by the Fisher information matrix of the state-action distribution~\cite{morimura2008new},  
\begin{equation}
    G_{\operatorname{M}}(\theta)_{ij} \coloneqq F(\theta)_{ij} = \sum_{s, a} \frac{\partial_{\theta_i} d_\theta(s,a) \partial_{\theta_j} d_\theta(s,a)}{d_\theta(s,a)} = g_{d_\theta}^{\FR}(\partial_{\theta_i} d_\theta, \partial_{\theta_j} d_\theta) . 
\end{equation}
This choice was observed to reduce the severity of plateaus, was used to design a natural actor-critic method~\cite{morimura2009generalized}, and is closely connected to the trust region method known as relative entropy policy search (REPS)~\cite{peters2010relative}. 

\subsection{Convergence guarantees} 
Now that we have built a convergence theory for general natural gradient flows we elaborate on the consequences for state-action natural policy gradients. 

\begin{corollary}[Sublinear convergence under function approximation]\label{cor:sublinearConvergenceNPG}
Consider a finite discounted Markov decision process, suppose \Cref{s_asu:exploration} and \Cref{s_asu:differentiable} hold, and consider a solution of the natural policy gradient flow~\eqref{eq:NPGF} for $G=G_{\operatorname{M}}$ and set $R^\star \coloneqq \max_{\pi\in\Delta_{\A}^\bS} R(\pi)$. 
Then it holds that 
    \begin{equation}\label{eq:sublinearConvergenceNPGFA}
        R^\star - R(\theta_t) \le \frac{\KL (d^\star, d_0)}{t} + t^{-1} \int_{0}^t \delta_s \varepsilon_s \D s,
    \end{equation}
    where $\delta_t^2 \coloneqq \chi^2(d^\star, d_t)$ and $\varepsilon_t^2\coloneqq \min_{w\in \R^p} \left\lVert \nabla^{\FR}_{\cD}f(\mu_t)-w^\top \nabla_\theta d_{\theta_t} \right\rVert_{g^{\FR}_{d_t}}^2$ and  
    \begin{equation}
        \varepsilon_t^2 \le \min_{w\in \R^p} \mathbb E_{d_{t}}\left[\left(w^\top \nabla_\theta\log d_{\theta_t}(s,a) - r(s,a)\right)^2\right].
    \end{equation}
\end{corollary}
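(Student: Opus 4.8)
The plan is to recognize this corollary as a direct specialization of the general natural-gradient convergence result \Cref{cor:convergenceGeneral} to the Markov decision process setting, with the dictionary: the finite set $\X$ becomes $\bS\times\A$, the polytope $P$ becomes the state-action polytope $\cD = \Delta_{\bS\times\A}\cap\mathcal L$, the parametrized measure is $d_\theta = d^{\pi_\theta}$, and the linear objective is $f(d) = r^\top d$, so that $f(\theta) = R(\theta)$ and $\nabla f(d) = r$. The first thing to check is that this dictionary is legitimate: under \Cref{s_asu:differentiable} the map $\theta\mapsto\pi_\theta$ is differentiable into $\operatorname{int}(\Delta_\A^\bS)$, and under \Cref{s_asu:exploration} the conditioning map $d\mapsto\pi$ is well-defined and smooth on a neighborhood of the relevant distributions, so $\theta\mapsto d_\theta$ is a differentiable parametrization into $\operatorname{int}(\cD)$; moreover the Gramian $G_{\operatorname{M}}(\theta) = F(\theta)$ is exactly the Fisher information matrix of $d_\theta$, so \eqref{eq:NPGF} with $G=G_{\operatorname{M}}$ is literally the natural gradient flow \eqref{eq:NGF} of \Cref{setting:NG}. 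This places us squarely in \Cref{setting:NG}, and since $f$ is linear it is in particular convex, so \Cref{cor:convergenceGeneral} applies verbatim.

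Next I would simply transcribe the conclusion. \Cref{cor:convergenceGeneral} gives \eqref{eq:PerturbedSublinearConvergence}, which in the MDP notation reads
\begin{equation*}
    R^\star - R(\theta_t) \le \frac{\KL(d^\star, d_0) - \KL(d^\star, d_t)}{t} + t^{-1}\int_0^t \varepsilon_s\delta_s\,\D s \le \frac{\KL(d^\star, d_0)}{t} + t^{-1}\int_0^t \varepsilon_s\delta_s\,\D s,
\end{equation*}
where the second inequality uses $\KL(d^\star, d_t)\ge0$; here $d^\star$ is an optimal state-action distribution attaining $R^\star$, whose existence follows because $\cD$ is a compact polytope and $r^\top d$ is continuous. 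The identifications $\delta_t^2 = \chi^2(d^\star, d_t)$ and $\varepsilon_t^2 = \min_{w}\lVert \nabla^{\FR}_{\cD}f(d_t) - w^\top\nabla_\theta d_{\theta_t}\rVert_{g^{\FR}_{d_t}}^2$ come directly from the statement of \Cref{prop:perturbedConvergence} together with \Cref{prop:projectionProperty}, which says the natural gradient flow makes $\partial_t d_t$ the Fisher-Rao projection of $\nabla^{\FR}_{\cD}f(d_t)$ onto $T_{\theta_t}\cD$, so the perturbation $\varepsilon_t$ is exactly the residual of that projection.

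The last step is the bound $\varepsilon_t^2 \le \min_w \mathbb E_{d_t}[(w^\top\nabla_\theta\log d_{\theta_t}(s,a) - r(s,a))^2]$. This is the content of the compatible function approximation identity: by \Cref{prop:compatiblaFA} applied with $v_t = \partial_t\theta_t$ the natural gradient direction, we have $\lVert \partial_t d_t - \nabla^{\FR}_{\cD}f(d_t)\rVert_{g^{\FR}_{d_t}}^2 = L(v_t,\theta_t) - C(\theta_t) \le L(v_t,\theta_t)$, and by \Cref{prop:actualCompatibleFA} the natural gradient direction minimizes $L(\cdot,\theta_t)$, so $L(v_t,\theta_t) = \min_w L(w,\theta_t) = \min_w \mathbb E_{d_t}[(w^\top\nabla_\theta\log d_{\theta_t}(s,a) - \nabla f(d_t)(s,a))^2]$, and finally $\nabla f(d) = r$ since $f$ is linear. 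I do not anticipate a genuine obstacle here — the work was already done in \Cref{cor:convergenceGeneral} and the compatible-function-approximation lemmas; the only care needed is the bookkeeping that \Cref{s_asu:exploration} really does guarantee $\theta\mapsto d_\theta$ lands in $\operatorname{int}(\cD)$ and that $\nabla f(d_t)(s,a) = r(s,a)$, so that the abstract statement instantiates cleanly.
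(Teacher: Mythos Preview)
Your proposal is correct and follows exactly the paper's approach: the paper's proof is the single sentence ``This is \Cref{cor:convergenceGeneral} for state-action natural policy gradients,'' and your plan is precisely to instantiate that corollary with the dictionary $\X=\bS\times\A$, $P=\cD$, $\mu_\theta=d_\theta$, $f(d)=r^\top d$, together with the bookkeeping that $\nabla f=r$ and that \Cref{s_asu:exploration} and \Cref{s_asu:differentiable} place you in \Cref{setting:NG}. Your additional unpacking via \Cref{prop:compatiblaFA}, \Cref{prop:actualCompatibleFA}, and \Cref{prop:projectionProperty} simply makes explicit what is already folded into \Cref{cor:convergenceGeneral}.
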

\begin{proof}
    This is \Cref{cor:convergenceGeneral} for state-action natural policy gradients. 
\end{proof}

\begin{remark}[Inexact gradient evaluations]
    If the parameters follow the evolution $\partial_t \theta_t = v_t$, then we can apply \Cref{prop:perturbedConvergence} to see that~\eqref{eq:sublinearConvergenceNPGFA} remains valid with 
    \begin{equation*}
        \varepsilon_t^2\coloneqq\left\lVert \nabla^{\FR}_\cD f(\mu_t)-v_t^\top \nabla_\theta d_{\theta_t} \right\rVert_{g^{\FR}_{d_t}}^2 \le  \mathbb E_{d_{t}}\left[\left(v_t^\top \nabla_\theta\log d_{\theta_t}(s,a) - r(s,a)\right)^2\right]. 
    \end{equation*}
\end{remark}

\begin{remark}[Comparison to Kakade's natural policy gradient]\label{rem:comparison-kakade}
For Kakade's natural policy gradient in discrete time without entropy regularization in the function approximation regime, 
the value converges as 
$O(\frac1t)$ up to a remainder stemming from function approximation \cite{alfano2023novel}. 
Compared to~\eqref{eq:sublinearConvergenceNPGFA}, the $O(\frac1t)$ involves a conditional KL term corresponding to the Kakade geometry, which is induced by the conditional entropy rather than the entropy. 
More importantly, however, it comes with a multiplicative distribution mismatch coefficient, where it is unclear whether it remains bounded during optimization. 
However, it is unclear whether this is inherent to Kakade's natural policy gradient or an artifact of the proof. 
The remainder term in~\cite{alfano2023novel} again depends on the distribution mismatch and on a concentrability coefficient similar to $\chi^2(d^\star, d_t)$.  
Another difference between Kakade's and state-action natural policy gradients is that the compatible function approximation regresses the (estimated) $Q$ or advantage function instead of the reward vector $r$, therefore leading to a different approximation error $\tilde\varepsilon_t$. 
Further, Kakade's natural policy gradient without entropy regularization in a function approximation setting has been shown to converge linearly when using geometrically increasing step sizes~\cite{xiao2022PGjmlr, alfano2022linear, yuan2022linear}. 

Finally, entropy regularization with strength $\lambda$ leads to $O(e^{-\lambda t})$ convergence up to a remainder term, where the same $\chi^2$-divergence appears in the remainder term albeit with a different approximation error term~\cite{cayci2024convergence}. 
\end{remark}

We have studied general policy parameterizations and have seen that the corresponding state-action distributions evolve according to the projection of the Fisher-Rao gradient flow. 
A particularly nice case is given by parameterizations that are rich enough to express all policies as in this case the state-action distributions exactly evolve according to the Fisher-Rao gradient flow. 
This is why we consider the following condition for policy parameterizations. 

\begin{definition}[Regular tabular parametrization]
We say that a differentiable parametrization  $\R^p\to\operatorname{int}(\Delta_\A^\bS); \theta\mapsto \pi_\theta$ is a \emph{regular tabular parametrization} if it is surjective and satisfies 
\begin{equation}
    \operatorname{span}\{\partial_{\theta_i}\pi_\theta : i=1, \dots, p\} = T \Delta_\A^\bS \quad \text{for all } \theta\in\R^p. 
\end{equation}
\end{definition}

Since $\pi\mapsto d^\pi$ is a diffeomorphism between $\operatorname{int}(\Delta_\A^\bS)$ and $\operatorname{int}(\cD)$, see~\cite{muller2023geometry}, we have 
\begin{equation*}
    \operatorname{span}\{\partial_{\theta_i} d_\theta : i=1, \dots, p\} = T \cD\quad \text{for all } \theta\in\R^p
\end{equation*}
for a regular policy parametrization. 

Regular parametrizations include the following common examples: 
\begin{itemize}
    \item \emph{Expressive exponential families:} 
    For a feature map $\phi\colon\bS\times\A\to\R^p$ and $\theta\in\R^p$ we consider the log-linear policy $\pi_\theta(a|s) \propto e^{\theta^\top\phi(s,a)}$.
    This provides a regular tabular parametrization if $\operatorname{rank}\{ \phi(s,a) : s\in\bS, a\in\A \} = \lvert \bS \rvert \cdot \lvert \A \rvert$, see~\cite[Remark 2.4]{rauh2011thesis}. 
    In particular, this includes 
    \emph{tabular softmax} policies, where $\pi_\theta(a|s)\propto e^{\theta_{s,a}}$ which is the arguably most commonly studied policy class. 
    \item \emph{Escort transform:}
    The so-called \emph{escort transform} $\pi_\theta(a|s) \propto \lvert\theta_{s,a}\rvert^p$, for a parameter $p\ge1$ 
    was introduced in~\cite{mei2020escaping} to reduce the plateaus of vanilla policy gradients when working with softmax policies. 
\end{itemize}

For regular tabular parameterizations, the state-action distributions $d_t$ evolve according to the Fisher-Rao gradient flow inside the state-action polytope $\cD$. 
Hence, we can apply our general convergence theory to obtain the following result. 

\begin{corollary}[Linear convergence for tabular parametrizations]\label{thm:linearConvergenceTabular}
Consider a finite discounted Markov decision process, suppose \Cref{s_asu:exploration} and \Cref{s_asu:differentiable} hold, and consider a solution of the natural gradient flow~\eqref{eq:NPGF} for a regular tabular parametrization and write $\mu_t = d_{\theta_t}$. 
Then $(\mu_t)_{t\ge0}$ solves the Fisher-Rao gradient flow of the linear program~\eqref{eq:LP-MDP} and hence \Cref{thm:convergenceFRGF} and \Cref{thm:convergence-without-uniqueness} hold.
This implies $O(e^{-\Delta t + \kappa\log t})$ convergence for some $\kappa\ge0$, where
\begin{align}\label{eq:defDelta}
    \begin{split}
        \Delta = \min \left\{ \frac{R^\star - R(\pi)}{\lVert d^{\pi^\star} - d^\pi \rVert_{\TV }} : \begin{array}{c}
            \pi \text{ is deterministic and agrees with } \\ \text{a deterministic optimal policy } \pi^\star \\ \text{ on all but one state}  
        \end{array}\right\} > 0 
    \end{split}
\end{align}
and $R^\star = \max_{\pi\in\Delta_{\A}^\bS} R(\pi)$ denotes the optimal reward. 
\end{corollary}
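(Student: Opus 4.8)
The plan is to recognize the state-action natural policy gradient flow~\eqref{eq:NPGF} with $G=G_{\operatorname{M}}$ as an instance of the natural gradient flow~\eqref{eq:NGF} from \Cref{setting:NG}, apply the projection property to deduce that the state-action distributions follow the Fisher-Rao gradient flow of the linear program~\eqref{eq:LP-MDP}, and then translate the geometric constant of \Cref{thm:convergenceFRGF}/\Cref{thm:convergence-without-uniqueness} into the combinatorial quantity~\eqref{eq:defDelta} via the face lattice of the state-action polytope.

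First I would note that $R(\theta)=R(\pi_\theta)=r^\top d_\theta=f(d_\theta)$ for $f(\mu)=r^\top\mu$, so $\nabla R(\theta)=\nabla_\theta f(d_\theta)$, and that $G_{\operatorname{M}}(\theta)=F(\theta)$ is precisely the Fisher information matrix of the model $\theta\mapsto d_\theta$; hence~\eqref{eq:NPGF} is the natural gradient flow~\eqref{eq:NGF} for this model and objective. Since $\pi\mapsto d^\pi$ is a diffeomorphism between $\operatorname{int}(\Delta_\A^\bS)$ and $\operatorname{int}(\cD)$, a regular tabular parametrization satisfies $T_\theta\cD=\operatorname{span}\{\partial_{\theta_i}d_\theta\}=T\cD$ for all $\theta$, so \Cref{prop:projectionProperty} gives $\partial_t\mu_t=\nabla^{\FR}_\cD f(\mu_t)$ with $\mu_t=d_{\theta_t}$. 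Because $\pi_{\theta_0}\in\operatorname{int}(\Delta_\A^\bS)$ and \Cref{s_asu:exploration} forces $d_{\theta_0}(s,a)=d^{\pi_{\theta_0}}(s)\pi_{\theta_0}(a|s)>0$, we have $\mu_0\in\operatorname{int}(\cD)$, and $\cD=\Delta_{\bS\times\A}\cap\mathcal L$ has a positive point, so $(\mu_t)_{t\ge0}$ is a solution of the Fisher-Rao gradient flow of~\eqref{eq:LP-MDP} in the sense of \Cref{setting:LP-inside-Simplex} (global existence of the flow, and of $\theta_t$ along it, following from the well-posedness result). Therefore \Cref{thm:convergenceFRGF} and \Cref{thm:convergence-without-uniqueness} apply with $c=r$.

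It remains to rewrite the constant $\Delta$ of~\eqref{eq:Delta}/\eqref{eq:DeltaWithoutUniqueness} in policy terms, which is the crux. Under \Cref{s_asu:exploration} every vertex of $\cD$ is of the form $d^\pi$ for a deterministic policy $\pi$ --- the unique feasible occupancy measure supported on $\{(s,\pi(s)):s\in\bS\}$ --- and two such vertices $d^{\pi^1},d^{\pi^2}$ are neighbors if and only if $\pi^1$ and $\pi^2$ differ in exactly one state. For the ``if'' direction one checks that imposing the coordinate equalities $d(s,a)=0$ for $(s,a)$ outside $\{(s',\pi^1(s')):s'\ne s\}\cup\{(s,\pi^1(s)),(s,\pi^2(s))\}$ carves out a one-dimensional face of $\cD$, namely the segment traced by the policies mixing the two actions at the single differing state $s$ (every point of that face conditions to such a policy by \Cref{s_asu:exploration}); for the ``only if'' direction, the minimal face containing $d^{\pi^1},d^{\pi^2}$ is supported on the union of their supports, of cardinality at least $\lvert\bS\rvert+2$ when the policies differ in at least two states, hence of dimension at least two, so the vertices are not adjacent. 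This is the polyhedral counterpart of a single policy-improvement step and is essentially contained in the classical analysis of MDPs~\cite{derman1970finite, kallenberg1994survey}. Substituting $c^\top d^{\pi^\star}-c^\top d^\pi=R^\star-R(\pi)$ and $\lVert d^{\pi^\star}-d^\pi\rVert_{\TV}$ into~\eqref{eq:Delta} in the unique-optimum case, and into~\eqref{eq:DeltaWithoutUniqueness} with $\operatorname{vert}(F^\star)=\{d^{\pi^\star}:\pi^\star\text{ deterministic optimal}\}$ and $N(\mu^\star)\setminus F^\star$ the non-optimal one-state deviations, yields exactly~\eqref{eq:defDelta}; positivity $\Delta>0$ follows from \Cref{lem:TV-LP-estimate-withoutUniqueness} unless $F^\star=\cD$, in which case $R$ is constant on $\cD$ and the claim is trivial.

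Finally, the rate $O(e^{-\Delta t+\kappa\log t})$ is read off from~\eqref{eq:linearConvergenceValue}, whose exponent equals $-\Delta(t-t_0)+2t_0\Delta\log\frac{t+t_0}{2t_0}=-\Delta t+2t_0\Delta\log t+O(1)$ as $t\to\infty$, so $\kappa=2t_0\Delta\ge0$ with $t_0$ from~\eqref{eq:definitiont_0} depending on $\mu_0=d_{\theta_0}$; in the non-unique case \Cref{thm:convergence-without-uniqueness} instead gives $O(e^{-\kappa t})$ for every $\kappa<\Delta$. The main obstacle is the face-lattice description of the state-action polytope under the exploration assumption; everything else is bookkeeping transporting the results of \Cref{sec:linearConvergeFRGF} through the diffeomorphism $\pi\mapsto d^\pi$.
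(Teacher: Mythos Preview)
Your proposal is correct and follows essentially the same route as the paper: reduce to the Fisher--Rao gradient flow via the projection property (\Cref{prop:projectionProperty}) using $T_\theta\cD=T\cD$ for regular tabular parametrizations, then invoke \Cref{thm:convergenceFRGF}/\Cref{thm:convergence-without-uniqueness} and rewrite $\Delta$ by identifying the neighbors of $d^\star$ in $\cD$. The only substantive difference is in how the neighbor structure is established: the paper simply cites~\cite{muller2022pomdps} for the correspondence between neighbors in $\cD$ and in $\Delta_\A^\bS$, whereas you give a direct polyhedral argument (carving out a one-dimensional face for the ``if'' direction, dimension-counting the minimal face for the ``only if'' direction). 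Your argument is sound under \Cref{s_asu:exploration} and has the advantage of being self-contained; the paper's approach is shorter but relies on an external reference.
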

\begin{proof}
The neighbors in $\cD$ and $\Delta_\A^\bS$ correspond to each other~\cite{muller2022pomdps}. 
Hence, $d^\pi$ is a neighbor of $d^\star$ if $\pi$ is deterministic and agrees with $\pi^\star$ on all but one state. 
\end{proof}

\begin{remark}[Comparison to Kakade's NPG]
    Much like the state-action natural policy gradient, Kakade's natural policy gradient with exact gradient evaluations has been shown to converge linearly without the need for entropy regularized setting~\cite{khodadadian2022linear}. 
    Here, the discrete-time setting is studied and NPG is interpreted as soft policy iteration. 
    This is used to show a convergence rate of $R^\star - R(\pi_k) = O(e^{-c k})$ for any $c \in(0, \Delta_{\operatorname{K}})$, where $\Delta_{\operatorname{K}} \coloneqq - (1-\gamma)^{-1}\max\left\{ A^\star(s,a) : a\ne a_s^\star \right\} \ge \Delta$, where $a^\star_s$ denotes the optimal action in state $s$. Indeed, by the performance difference lemma, we have 
\begin{align*}
    \Delta & = \min_{d\in N(d^\star)}  -\frac{d^\top A^\star}{(1-\gamma)\cdot \lVert d^\star - d \rVert_{\TV }}   = - (1-\gamma)^{-1}\max_{d\in N(d^\star)}  \frac{d^\top A^\star}{\lVert d^\star - d \rVert_{\TV }} . 
\end{align*}
Note that $d\in N(d^\star)$ can be associated with a policy $\pi$ that agrees with $\pi^\star$ on all but one state, and we write $\pi(a_0|s_0)=1$ for $a_0\ne a_{s_0}^\star$ for some $s_0$ and $\pi(a_s^\star|s)=1$ for $s\ne s_0$. 
Since $A^\star(s, a_s^\star) = 0$ we have $d^TA^\star = d(s_0) A^\star(s_0, a_0)\le0$ and estimate 
\begin{align*}
    2\lVert d^\star - d \rVert_{\TV } & = \sum_{s\ne0} \lvert d^\star(s) - d(s) \rvert + d^\star(s_0) + d(s_0) 
    \\ &
     \ge \sum_{s\ne0} (d^\star(s) - d(s)) + d^\star(s_0) + d(s_0) 
    \\ & = (1-d^\star(s_0)) - (1-d(s_0)) + d^\star(s_0) + d(s_0)  = 2d(s_0). 
\end{align*}
Overall, this yields $\frac{d^\top A^\star}{\lVert d^\star - d \rVert_{\TV }} \ge {A^\star(s_0, a_0)}$ and therefore 
\begin{align*}
\Delta \le - (1-\gamma)^{-1}\max\left\{ A^\star(s,a) : a\ne a_s^\star \right\} = \Delta_{\operatorname{K}}.
\end{align*}
Hence, the guaranteed convergence rate of Kakade's NPG is faster compared to the rate we provide for the state-action natural policy gradient. 
An essentially matching lower bound has been established for Kakade's NPG in~\cite{muller2024essentially}, whereas a matching lower bound is missing for state-action natural policy gradients. 
In our computational example, both converge at the same exponential rate $O(e^{-\Delta_{\operatorname{K}}t})$ even if $\Delta < \Delta_{\operatorname{K}}$. 
\end{remark}

\begin{remark}[Implicit bias]\label{rem:implicit-bias}
In particular, \Cref{thm:linearConvergenceTabular} guarantees that in the case of multiple optimal policies, the gradient flows corresponding to state-action natural policy gradients converge exponentially fast towards the information projection $d^\star$ of $d^{\pi_0}$ to the set of maximizes $D^\star = \{d\in \cD : r^\top d = R^\star\}\subseteq\cD$. 
This shows that state-action natural gradients not only optimize the reward but produce the policy that induces a state-action distribution with maximal entropy with respect to the initial state-action distribution $d^{\pi_0}$. 
This characterizes the \emph{implicit bias} of state-action natural policy gradients. 
Prior, the implicit bias of a natural actor-critic method has been analyzed by \cite{hu2021actor}, where they provided an $O(\log k)$ bound on the optimal policy with maximal (weighted) entropy over the states. 
Note that as this bound grows with the number of iterations $k$ it can't identify the limiting policy. 
Further, by using the reformulation as a Hessian gradient flow from~\cite{muller2023geometry} and results from convex optimization~\cite{alvarez2004hessian} convergence towards the (generalized) maximal entropy policy for Kakade's natural policy gradient has been established in~\cite{muller2024essentially}. 
\end{remark}

\begin{remark}[Comparison to previous rates]\label{rem:strict-improvements}
In \Cref{thm:linearConvergenceTabular} we provide linear convergence with exponent $\Delta$ and have discussed in \Cref{rem:comparison} this exponent improves on previously established $O(e^{}-\delta t)$ guarantee in \cite{weed2018explicit, suarez2023perspectives}. 
To see this, we note that $d^{\pi_1}, d^{\pi_2}\in\mathscrsfs{D}$ are neighboring vertices if and only if $\pi_1, \pi_2\in\Delta_\mathbb A^\mathbb S$ are neighbors~\cite{muller2022pomdps}. 
Two policies are neighboring if and only if they are deterministic and agree on all but one state. 
Hence, if we consider an MDP with more than one state, there is at least one state $s\in\mathbb S$ such that $\pi_1(a|s) = \pi_2(a|s)=1$ for some $a\in\mathbb A$ and therefore $d^{\pi_1}(s,a) = d^{\pi_1}(s)>0$ and $d^{\pi_2}(s,a) = d^{\pi_2}(s)>0$. 
Hence, $d^{\pi_1}, d^{\pi_2}\in\mathscrsfs{D}$ do not have disjoint support and as elaborated in \Cref{rem:comparison} this implies $\delta<\Delta$. 
Overall, this shows that for an exploratory MDP with more than one state, we have $\delta <\Delta$, meaning that our convergence rate improves upon~\cite{weed2018explicit, suarez2023perspectives}. 
\end{remark}

\subsection{Computational examples}\label{subsec:computational-example}
We use an example from \cite{kakade2001natural,bagnell2003covariant,morimura2008new} of an MDP with two states $s_1, s_2$ and two actions $a_1, a_2$, with the transitions and instantaneous rewards shown in \Cref{fig:kakadeExample}.  
We make our code
available under~\url{https://github.com/muellerjohannes/fisher-rao-GFs-LPs}. 

\begin{figure}[h!]
    \centering
    \begin{tikzpicture}
    \node[shape=circle, draw=black, minimum size=1cm] (A) at (0,0) {\(s_1\)};
    \node[shape=circle,draw=black,minimum size=1cm] (B) at (3,0) {\(s_2\)};
    \path [->] (B) [bend left=20] edge node[below] {\(a_2\)} (A);
    \path [->] (A) [bend left=20] edge node[above] {\(a_2\)} (B);
    \path [->] (A) [loop left] edge node[left] {\(r=+1,\; a_1\)} (A);
    \path [->] (B) [loop right] edge node[right] {\(a_1,\; r=+2\)} (B);
\end{tikzpicture}
\vspace{-.5cm}
\caption{Transition graph and reward of the MDP example.}
\label{fig:kakadeExample}
\end{figure}
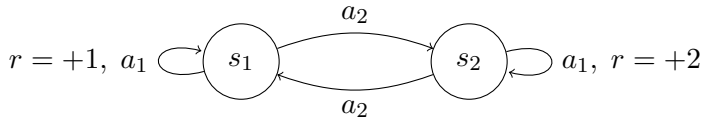

We adopt the initial distribution $\mu(s_1)=0.8, \mu(s_2) = 0.2$ and work with a discount factor of $\gamma = 0.9$. 
We can explicitly compute the rewards of the four deterministic policies to be $R_1=0.98$, $R_2 = 1.2$, $R_3=1.84$ and $R_4=0$, 
and this way determine the optimal policy. 
Consequently, we can compute the exponent $\Delta$ given in \Cref{thm:linearConvergenceTabular} to be $\Delta=0.8$. 
In contrast, the exponent $\delta$ given in~\cite{weed2018explicit, suarez2023perspectives} is $\delta=0.64$. 
In \Cref{rem:comparison} we observed that $\delta\le \Delta$, and this now provides an explicit example where $\delta<\Delta$. 
Finally, we compute the constant $\Delta_{\operatorname{K}}=0.8$ that describes the exponent in the convergence rate of Morimura's natural policy gradient~\cite{khodadadian2022linear}. 

To illustrate our theoretical findings, we run both state-action natural gradients as well as Kakade's natural policy gradient applied to a tabular soft-max parametrization for $30$ random initializations. 
In order to prevent a blow-up of the parameters we use the update rule 
\begin{equation}
    \theta_{k+1} = \theta_k + \eta \cdot G(\theta_k)^+\nabla R(\theta_k),
\end{equation}
with stepsize $\eta>0$, where we choose $\eta = 10^{-2}$ in our experiments. 
Intuitively, we expect $\theta_k \approx \tilde{\theta}_{\eta k}$ if $(\tilde{\theta}_t)_{t\ge0}$ solves the natural policy gradient flow. 

\subsubsection{A first example with tightness} 
\Cref{fig:originalExample} plots the suboptimality gap $R^\star - R(\theta_k)$ as well as the KL-divergence $\KL (d^\star, d_{\theta_k})$ for the two different natural policy gradient methods and the same $30$ random initializations. 
Additionally, the gray dashed line indicates the exponential decay rate $O(e^{-\Delta \eta k}) = O(e^{-\Delta_{\operatorname{K}} \eta k})$ guaranteed by \Cref{thm:linearConvergenceTabular} and by~\cite{khodadadian2022linear}, respectively. 
We see that for all trajectories both the suboptimality gap $R^\star-R(\theta_k)$ as well as the KL-divergence to the optimal state-action distribution $\KL (d^\star, d_{\theta_k})$ decay at this guaranteed rate for both the state-action and Kakade's natural policy gradient method. 
\begin{figure}
    \centering
    \begin{tikzpicture}[scale=1.]
        \node[inner sep=0pt] (r1) at (0,0)
        {\includegraphics[width=5cm]{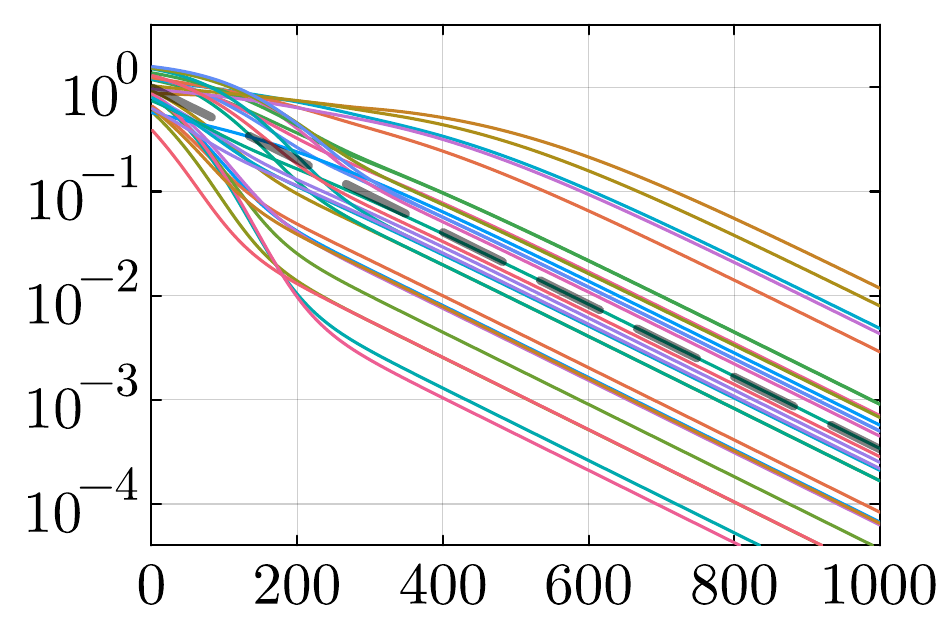}};
    \node[inner sep=0pt] (r2) at (6,0)
        {\includegraphics[width=5cm]{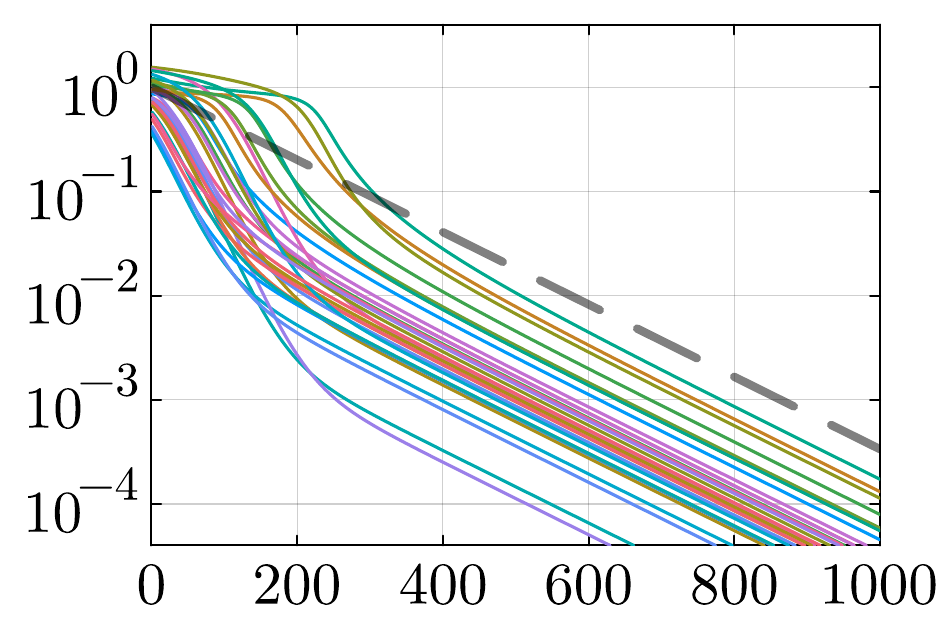}}; 
    \node[inner sep=0pt] (r1) at (0,-4)
        {\includegraphics[width=5cm]{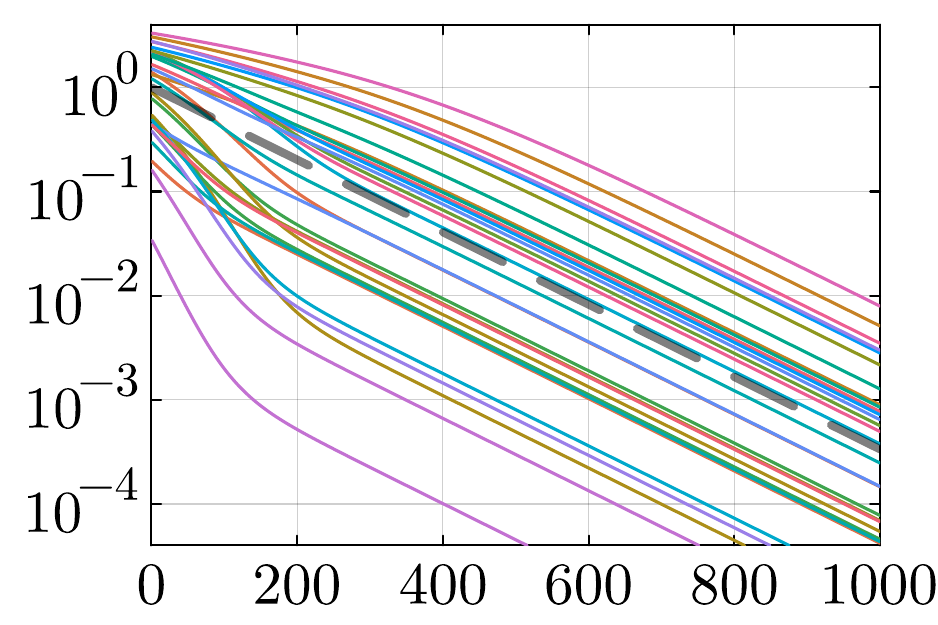}};
    \node[inner sep=0pt] (r2) at (6,-4)
        {\includegraphics[width=5cm]{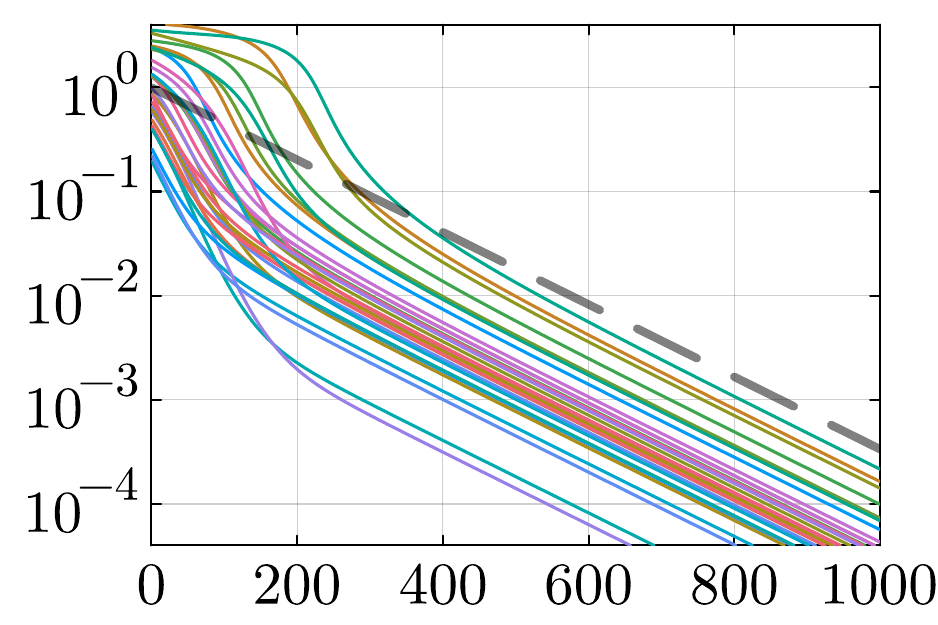}};  
    \node[inner sep=0pt] (l1) at (0.5,2) {State-action NPG}; 
    \node[inner sep=0pt] (l2) at (6.5,2) {Kakade's NPG}; 
    \node[inner sep=0pt] (l1) at (0.5,-2) {$k$}; 
    \node[inner sep=0pt] (l1) at (6.5,-2) {$k$}; 
    \node[inner sep=0pt] (l1) at (0.5,-6) {$k$}; 
    \node[inner sep=0pt] (l1) at (6.5,-6) {$k$}; 
    \node[inner sep=0pt, rotate=90] (l1) at (-2.7,0) {$R^\star - R(\theta_k)$};
    \node[inner sep=0pt, rotate=90] (l1) at (3.3,0) {$R^\star - R(\theta_k)$}; 
    \node[inner sep=0pt, rotate=90] (l1) at (-2.7,-4)  {$\KL (d^\star, d_{\theta_k})$}; 
    \node[inner sep=0pt, rotate=90] (l1) at (3.3,-4) {$\KL (d^\star, d_{\theta_k})$}; 
    \end{tikzpicture}
    \caption{Shown are the suboptimality gap $R^\star - R(\theta_t)$ (top row) and the KL-divergence $\KL (d^\star, d_t)$ (bottom row) for the state-action NPG (left column) and Kakade's NPG (right column) plotted in a logarithmic scale, along with the predicted exponential decay $e^{-\Delta \eta k} = e^{-\Delta_{\operatorname{K}} \eta k}$ (dashed line), see \Cref{thm:linearConvergenceTabular} and \cite{khodadadian2022linear} for state-action and Kakade's NPG, respectively.} 
    \label{fig:originalExample}
\end{figure}
    
\subsubsection{A second example and non-tightness} 
We complement our computational example by studying the same Markov decision process from \Cref{fig:kakadeExample} but changing the reward vector according to $r(s_1,a_2)=3$. 
As above we can compute the three constants $\delta$, $\Delta$ and $\Delta_{\operatorname{K}}$, and obtain $\delta \approx 0.5326$, $\Delta \approx 0.5789$ and $\Delta_{\operatorname{K}} = 1.1$. 
Here again $\delta<\Delta$ as it is always guaranteed under the \Cref{s_asu:exploration}, see \Cref{rem:comparison}. 
Further, in this example, we have $\Delta<\Delta_{\operatorname{K}}$. 
We conduct the same experiment as before and report the findings in \Cref{fig:gapExample}. 
In the plots concerning the state-action natural policy gradient, we plot both the guaranteed decay $O(e^{-\Delta \eta k})$ (gray dashed line) and the decay $O(e^{-\Delta_{\operatorname{K}} \eta k})$ guaranteed for Kakade's natural policy gradient (gray dotted line). 
We see that both methods exhibit the convergence rate $O(e^{-\Delta_{\operatorname{K}} \eta k})$. 
In particular, this indicates that our convergence analysis of the Fisher-Rao gradient, although improving on known results, is still not tight for general problems. 

\begin{figure}
    \centering
    \begin{tikzpicture}[scale=1.]
        \node[inner sep=0pt] (r1) at (0,0)
        {\includegraphics[width=5cm]{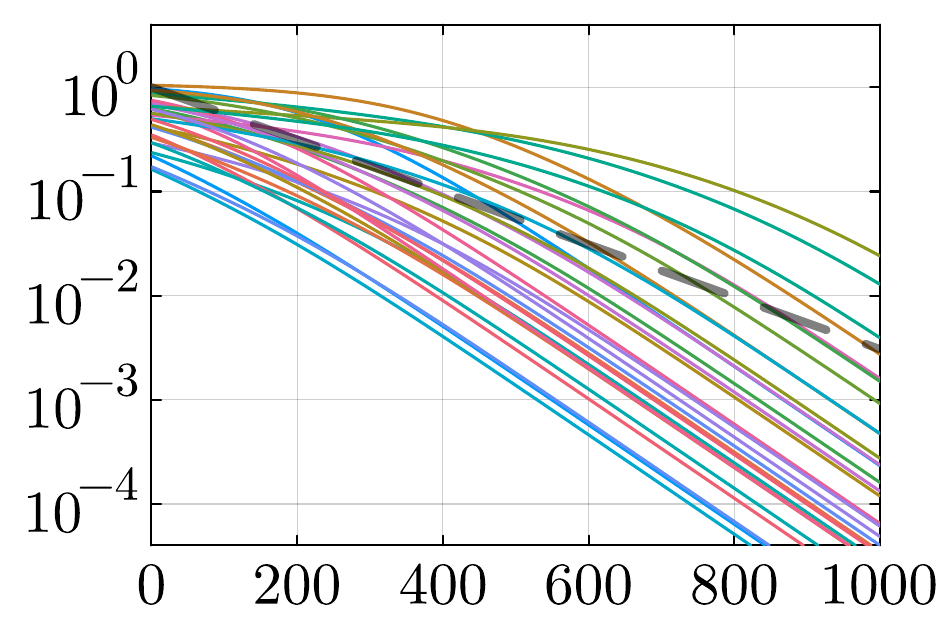}};
    \node[inner sep=0pt] (r2) at (6,0)
        {\includegraphics[width=5cm]{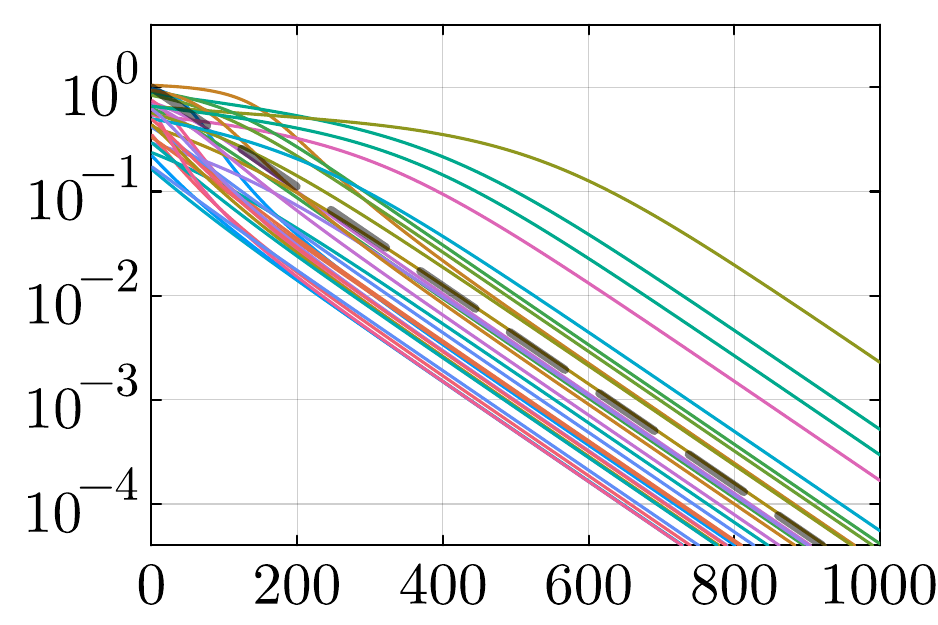}}; 
    \node[inner sep=0pt] (r1) at (0,-4)
        {\includegraphics[width=5cm]{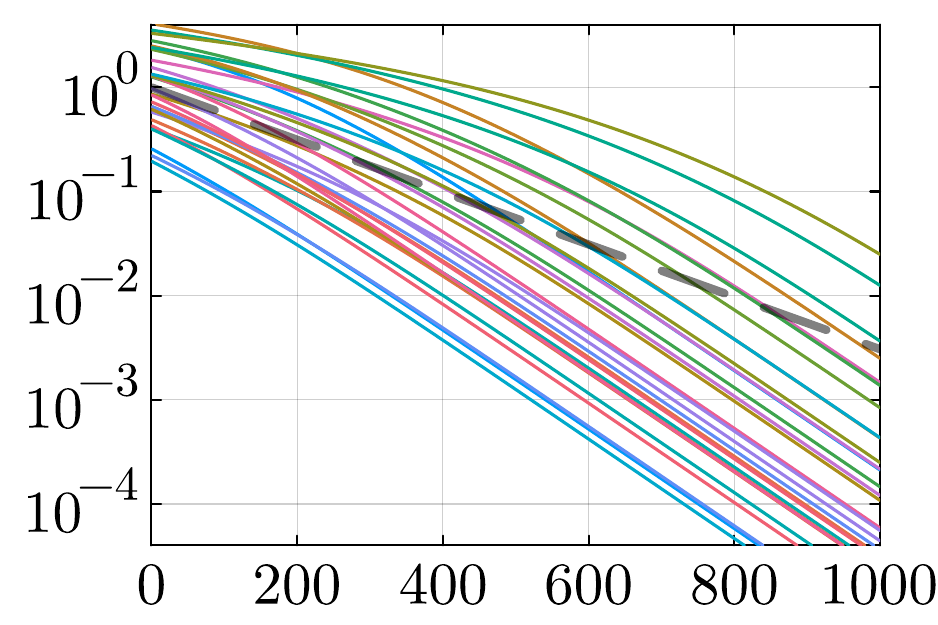}};
    \node[inner sep=0pt] (r2) at (6,-4)
        {\includegraphics[width=5cm]{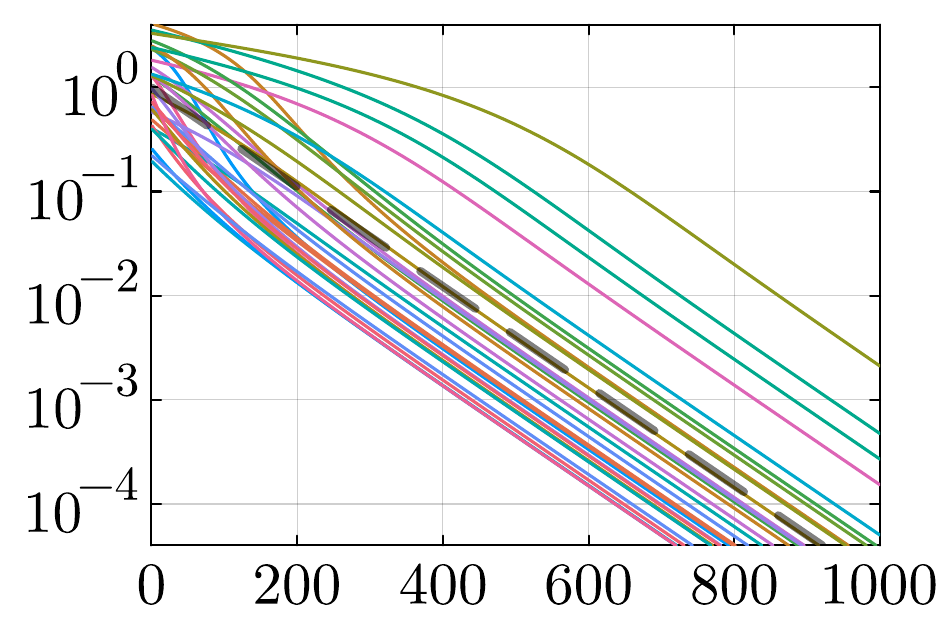}};  
    \node[inner sep=0pt] (l1) at (0.5,2) {State-action NPG}; 
    \node[inner sep=0pt] (l2) at (6.5,2) {Kakade's NPG}; 
    \node[inner sep=0pt] (l1) at (0.5,-2) {$k$}; 
    \node[inner sep=0pt] (l1) at (6.5,-2) {$k$}; 
    \node[inner sep=0pt] (l1) at (0.5,-6) {$k$}; 
    \node[inner sep=0pt] (l1) at (6.5,-6) {$k$}; 
    \node[inner sep=0pt, rotate=90] (l1) at (-2.7,0) {$R^\star - R(\theta_k)$};
    \node[inner sep=0pt, rotate=90] (l1) at (3.3,0) {$R^\star - R(\theta_k)$}; 
    \node[inner sep=0pt, rotate=90] (l1) at (-2.7,-4)  {$\KL (d^\star, d_{\theta_k})$}; 
    \node[inner sep=0pt, rotate=90] (l1) at (3.3,-4) {$\KL (d^\star, d_{\theta_k})$}; 
    \end{tikzpicture}
    \caption{Shown are the suboptimality $R^\star - R(\theta_k)$ (top) and  KL-divergence $\KL (d^\star, d_{\theta_k})$ (bottom) for the state-action NPG (left) and Kakade's NPG (right); shown are also the guaranteed exponential decay rates $e^{-\Delta \eta k}$ for the state-action NPG (dashed line) and $e^{-\Delta_{\operatorname{K}} \eta k}$ for Kakade's NPG (dotted line). Although the guarantees are different, both methods exhibit the same fast decay rate.}
    \label{fig:gapExample}
\end{figure}

\section{Conclusion and Outlook}
\label{sec:conclusions}
We study Fisher-Rao gradient flows of linear programs and show they converge linearly with an exponent that depends on the geometry of the linear program. 
This yields an estimate on the error introduced by entropic regularization of the linear program, which improves existing guarantees. 
We extend this analysis to natural gradient flows for general parametrized measure models and show they converge at a sublinear rate $O(\frac1t)$ up to an approximation error and mismatch of the trajectory to the solution measure in the $\chi^2$-divergence. 
In particular, our results yield $O(\frac1t)$ convergence of state-action natural policy gradients without regularization under function approximation and linear convergence of state-action natural policy gradients for general tabular parametrizations. 
Finally, we provide computational examples illustrating our results. 

Our results improve previous results, but some further improvements may be possible. 
In particular, we use the best global constant $\Delta>0$ for which the estimate~\eqref{eq:gapVSNorm-2} holds for all $\mu\in P$. However, if one can improve this constant along the trajectory $(\mu_t)_{t\ge0}$ this would directly imply an improvement of the convergence rate. A natural way to approach this is to characterize the direction from which the flow $(\mu_t)_{t\ge0}$ is approaching the global optimizer $\mu^\star$. 
Another interesting direction for future work is to study the statistical complexity of the state-action natural policy gradients. 
Finally, it could be explored whether our convergence results can be used in order to modify the cost to achieve a faster convergence without changing the optimizer, which is known as reward shaping in the context of reinforcement learning. 

\section*{Acknowledgments}
The project originated when JM was a PhD student at the International Max Planck Research School \emph{Mathematics in the Sciences} at MPI MiS with additional support from the \emph{Evangelisches Studienwerk Villigst e.V.}. 
SC and JM acknowledge funding by the Deutsche Forschungsgemeinschaft (DFG, German Research Foundation) under the project number 442047500 through the Collaborative Research Center \emph{Sparsity and Singular Structures} (SFB 1481).
GM has been supported in part by NSF CAREER 2145630, NSF 2212520, DFG SPP 2298 project 464109215, ERC 757983, and BMBF in DAAD project 57616814. 
\appendix

 \section{Auxiliary results}

\begin{lemma}\label{app:lem:cone}
    Consider a polytope $P\subseteq\R^\X$, a face $F\subseteq P$ and consider the cone 
    \begin{align*}
        C\coloneqq \operatorname{cone}\Big\{ \nu-\mu : \mu\in\operatorname{vert}(F), \nu\in N(\mu)\setminus F \Big\}, 
    \end{align*}
    which is generated by the edges pointing out of $F$. 
    Then we have $P\subseteq F+C$. 
\end{lemma}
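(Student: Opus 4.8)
Rather than constructing, for a given $\mu\in P$, a point $w\in F$ with $\mu-w\in C$ directly, the plan is to dualize. Note first that $C$ is generated by finitely many vectors---the differences $\nu-\mu$ with $\mu\in\operatorname{vert}(F)$ and $\nu\in N(\mu)\setminus F$, of which there are only finitely many---so $C$ is a finitely generated, hence closed, cone. Since $F$ is compact, the Minkowski sum $F+C$ is closed and convex (and $C$ is its recession cone). We assume $F\neq\emptyset$; if $F=P$ then $C=\{0\}$ and the claim is trivial, so assume $F$ is a proper nonempty face.

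Suppose toward a contradiction that some $\mu_0\in P$ does not lie in $F+C$. Since $F+C$ is closed and convex, the separating hyperplane theorem yields a direction $d\in\R^\X$ with
\[
    d^\top\mu_0<\inf_{y\in F+C}d^\top y=\min_{w\in F}d^\top w+\inf_{z\in C}d^\top z .
\]
The right-hand side is finite and $C$ is a cone, so $\inf_{z\in C}d^\top z=0$ and necessarily $d^\top z\ge 0$ for all $z\in C$; equivalently, $d^\top(\nu-\mu)\ge 0$ for every $\mu\in\operatorname{vert}(F)$ and every $\nu\in N(\mu)\setminus F$. It therefore suffices to show that this property of $d$ forces $\min_{\mu'\in P}d^\top\mu'=\min_{w\in F}d^\top w$: given that, $d^\top\mu_0\ge\min_{\mu'\in P}d^\top\mu'=\min_{w\in F}d^\top w$, contradicting the displayed strict inequality.

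To prove this implication, pick $w^\star\in\operatorname{vert}(F)$ attaining $d^\top w^\star=\min_{w\in F}d^\top w$ and argue that $w^\star$ minimizes $d^\top(\cdot)$ over all of $P$. It is enough to check that $d$ is non-decreasing along every edge of $P$ emanating from $w^\star$: the tangent cone $\operatorname{cone}(P-w^\star)$ at the vertex $w^\star$ is generated by the edge directions $\{\nu-w^\star:\nu\in N(w^\star)\}$, so every $\mu'\in P$ can be written $\mu'-w^\star=\sum_k\lambda_k(\nu_k-w^\star)$ with $\lambda_k\ge 0$ and $\nu_k\in N(w^\star)$, whence $d^\top\mu'\ge d^\top w^\star$ as soon as $d^\top(\nu_k-w^\star)\ge 0$ for each $k$. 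Now fix $\nu\in N(w^\star)$. If $\nu\notin F$, then $\nu-w^\star$ is one of the generators of $C$, so $d^\top(\nu-w^\star)\ge 0$ by the property of $d$ above. If $\nu\in F$, then $d^\top\nu\ge d^\top w^\star$ because $w^\star$ minimizes $d^\top(\cdot)$ over $F$. In either case $d^\top(\nu-w^\star)\ge 0$, which proves the claim, hence the lemma.

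\textbf{Main obstacle.} The computations here are routine; the two points needing care are the framing and one structural fact. First, one must recognize that dualizing is the right move---a direct construction of the decomposition, or an induction on $\dim P-\dim F$ by enlarging $F$ to a facet, forces one to track how outgoing edges transform under the reductions, which is appreciably more delicate. Second, the argument relies on the standard fact that the tangent cone of a polytope at a vertex is generated by the incident edge directions (equivalently, that a vertex at which a linear functional is non-decreasing along every incident edge is a global minimizer); this is classical and should be cited rather than reproved. Everything else---finite generation implying closedness of $C$, closedness of $F+C$, and the elementary fact that a linear functional bounded below on a cone is non-negative on it---is immediate.
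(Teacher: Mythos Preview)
Your proof is correct and takes a genuinely different route from the paper's.

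The paper argues constructively: it chooses an affine hyperplane $H=\{a^\top\mu=\alpha\}$ separating $F$ from $\operatorname{vert}(P)\setminus F$, forms the face figure $P/F=P\cap H$, and for a given point $u$ slides $H$ to a parallel slice $\tilde H$ through $u$. Invoking a description of the vertices of such slices (Ziegler, Proposition~2.30), it writes $u$ as a convex combination of points $\mu_i+\alpha_i(\nu_i-\mu_i)$ with $\mu_i\in\operatorname{vert}(F)$ and $\nu_i\in N(\mu_i)\setminus F$, and reads off $u\in F+C$ from that expression.

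Your argument dualizes instead: you separate a putative $\mu_0\in P\setminus(F+C)$ by a linear functional $d$, observe that $d$ must be nonnegative on $C$, and then show that the vertex $w^\star\in\operatorname{vert}(F)$ minimizing $d$ over $F$ in fact minimizes $d$ over all of $P$, because $d$ is non-decreasing along every edge out of $w^\star$ (edges inside $F$ by minimality over $F$, edges leaving $F$ because they generate $C$). The contradiction is immediate. Both proofs ultimately rest on the same structural fact---the tangent cone of a polytope at a vertex is generated by its incident edge directions (this is exactly the content of \cite[Lemma~3.6]{ziegler2012lectures}, which the paper also cites)---but your route avoids the slice computation entirely and makes the dependence on that fact explicit. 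The paper's approach, in exchange, yields an explicit decomposition of each $u\in P$ rather than a pure existence statement.
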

\begin{proof}
This is a generalization of~\cite[Lemma 3.6]{ziegler2012lectures}, which covers the case that $F$ consists of a single vertex. 
We will show that 
\begin{align*}
    F+C \supseteq\tilde P \coloneqq \bigcap \Big\{ H\subseteq \R^\X : H \text{ is a halfspace }, P\subseteq H, H\cap F\ne\emptyset \Big\} \supseteq P, 
\end{align*}
for which we pick an element $u\in \tilde P$. 
Consider a hyperplane $H = \{\mu:a^\top \mu = \alpha\}$ separating $F$ and $\operatorname{vert}(P)\setminus F$ and consider the \emph{face figure} $P/F\coloneqq P\cap H$, which is a polytope. 
Now, we consider a translation $\tilde H = \{ \mu : a^\top\mu= \beta \}$ of $H$,  such that $u\in \tilde H$. 
Now we have 
\begin{align*}
    \tilde P = \operatorname{conv}\left\{ \mu + \frac{a^\top \mu - \beta}{a^\top \mu - a^\top \nu} \cdot (\nu-\mu) : \mu\in\operatorname{vert}(F), \nu\in N(\mu)\setminus F \right\},
\end{align*}
see~\cite[Proposition 2.30]{zieglerdiscrete}. 
Hence, we can choose convex weights $\lambda_i$ such that 
\begin{align*}
    u = \sum_{i} \lambda_i (\mu_i + \alpha_i (\nu_i-\mu_i)) =  \sum_{i} \lambda_i \mu_i + \sum_{i} \alpha_i \lambda_i (\nu_i-\mu_i) \in F + C,
\end{align*}
where $\mu_i\in\operatorname{vert}(F), \nu_i \in N(\mu_i)\setminus F$.
\end{proof}

\setcounter{theorem}{11}
\setcounter{section}{3}
\renewcommand\thesection{\arabic{section}}

\renewcommand\thesection{\Alph{section}}
\setcounter{section}{1}
\setcounter{theorem}{1}

\begin{lemma}[Information projections have maximal support]\label{app:lem:support}
    Consider a polytope $P=\Delta_\mathbb X \cap L$ for an affine space $L$ and a face $F$ of $P$. 
    Further, let $\hat\mu\in F$ be the information projection of $\mu\in \operatorname{int}(P)$ to $F$, then $\hat\mu\in\operatorname{int}(F)$. 
\end{lemma}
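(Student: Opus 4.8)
The plan is to exploit the fact that the negative Shannon entropy, hence also the Bregman divergence $\KL(\cdot,\mu)$, has infinite slope towards the relative boundary of $F$, which forces the minimiser $\hat\mu$ into the relative interior. First I would recall the structure of the faces of $P=\Delta_\X\cap L$: since the only inequality constraints defining $P$ are the coordinate positivities $\nu_x\ge 0$, every face $F$ is obtained by turning some of these into equalities, so $F=\Delta_\X\cap L'$ for an affine subspace $L'\subseteq L$, and its relative interior is exactly the set of points of maximal support, $\operatorname{int}(F)=\{\nu\in F:\operatorname{supp}(\nu)=S\}$, where $S\coloneqq\bigcup_{\nu\in F}\operatorname{supp}(\nu)$ (this is the standard description of the relative interior of such a ``sliced simplex''). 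In particular $\operatorname{int}(F)\ne\emptyset$, so we may fix some $\nu^\sharp\in\operatorname{int}(F)$, i.e.\ with $\operatorname{supp}(\nu^\sharp)=S$. I would also note that $\mu_x>0$ for every $x\in S$, since $S\subseteq\operatorname{supp}(P)=\operatorname{supp}(\mu)$ because $\mu\in\operatorname{int}(P)$. Finally, since $\hat\mu,\nu^\sharp\in\Delta_\X$, the two linear terms in the definition of $\KL$ are constant along any segment between probability vectors, so only the entropy-type term matters.

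Then I would argue by contradiction: suppose $\hat\mu\notin\operatorname{int}(F)$, so there is $x_0\in S$ with $\hat\mu_{x_0}=0<\nu^\sharp_{x_0}$. Consider the segment $\nu_t\coloneqq(1-t)\hat\mu+t\nu^\sharp\in F$ for $t\in[0,1]$ and set $g(t)\coloneqq\KL(\nu_t,\mu)$. For $t\in(0,1)$ one has $\operatorname{supp}(\nu_t)=\operatorname{supp}(\hat\mu)\cup\operatorname{supp}(\nu^\sharp)=S$, so $g(t)=\sum_{x\in S}(\nu_t)_x\log\tfrac{(\nu_t)_x}{\mu_x}+\text{const}$ is smooth with $g'(t)=\sum_{x\in S}(\nu^\sharp_x-\hat\mu_x)\bigl(\log\tfrac{(\nu_t)_x}{\mu_x}+1\bigr)$. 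Now I would examine $t\to 0^+$: summands with $\hat\mu_x>0$ stay bounded (since $(\nu_t)_x\to\hat\mu_x>0$ and $\mu_x>0$); summands with $\hat\mu_x=0$ equal $\nu^\sharp_x(\log t+\log\tfrac{\nu^\sharp_x}{\mu_x}+1)$, which tends to $-\infty$ (here $\nu^\sharp_x>0$ as $x\in S=\operatorname{supp}(\nu^\sharp)$, and this set of indices is non-empty as it contains $x_0$); and no summand tends to $+\infty$. Hence $g'(t)\to-\infty$, so $g$ is strictly decreasing on some interval $[0,\varepsilon]$ (using continuity of $g$ at $0$), giving $g(\varepsilon)<g(0)=\KL(\hat\mu,\mu)$ with $\nu_\varepsilon\in F$ — contradicting that $\hat\mu$ minimises $\KL(\cdot,\mu)$ over $F$. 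Therefore $\operatorname{supp}(\hat\mu)=S$, i.e.\ $\hat\mu\in\operatorname{int}(F)$.

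I expect the main difficulty to be bookkeeping rather than conceptual: one must cleanly isolate the term driving the one-sided derivative to $-\infty$ and check that no other term compensates it, which is exactly where $\mu_{x_0}>0$ (a consequence of $\mu\in\operatorname{int}(P)$ and $F\subseteq P$) is used; and one must state precisely the folklore identification of $\operatorname{int}(F)$ with the maximal-support points of $F$, for which the description of faces of $\Delta_\X\cap L$ as coordinate restrictions suffices. A slightly more hands-on variant avoids differentiating $g$ altogether and instead estimates $g(t)-g(0)\le -c\,t\log(1/t)+O(t)<0$ for small $t>0$ directly from $a\log(a/\mu_x)\le a\log(1/\mu_x)$-type bounds, but the derivative argument above is the cleanest.
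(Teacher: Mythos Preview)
Your proposal is correct and follows essentially the same approach as the paper: both argue by contradiction, move along a segment from $\hat\mu$ into $\operatorname{int}(F)$, and use that the directional derivative of $\KL(\cdot,\mu)$ along this segment tends to $-\infty$ at the boundary to contradict minimality. Your write-up is more explicit about the identification of $\operatorname{int}(F)$ with maximal-support points and about which summands of $g'(t)$ diverge, whereas the paper condenses this into a one-line convexity inequality; but the underlying idea is identical.
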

\begin{proof}
    Note that $\hat\mu\in F$ is characterized by $D_{\operatorname{KL}}(\hat\mu, \mu) = \min_{\mu'\in F} D_{\operatorname{KL}}(\mu', \mu)$. 
    Assume that $\hat\mu\in\partial F$, then $\mu_{x_0}=0$ for some $x_0\in\X$. 
    Consider now $v\in\R^\X$ such that $\hat\mu+tv\in\operatorname{int}(F)$ for $t>0$ small enough, then surely $v_{x_0}>0$. 
    By convexity of the KL-divergence, we have $D_{\operatorname{KL}}(\hat\mu, \mu) \ge D_{\operatorname{KL}}(\hat\mu+tv, \mu) + t \partial_t D_{\operatorname{KL}}(\hat\mu+tv, \mu)$, where $\partial_t D_{\operatorname{KL}}(\hat\mu+tv, \mu)\to-\infty$ for $t\to0$. 
    This shows $D_{\operatorname{KL}}(\hat\mu, \mu) > D_{\operatorname{KL}}(\hat\mu+tv, \mu)$ for $t$ small enough contradicting that $\hat\mu$ is the information projection of $\mu$. 
\end{proof}

\bibliographystyle{plain} 
\bibliography{references}

\end{document}